\newcommand{\G}{{\mathbb{G}}}
\newcommand{\B}{{\mathbb{B}}}
\newcommand{\K}{{\mathbb{K}}}
\newcommand{\E}{{\mathbb{E}}}
\newcommand{\J}{{\mathbb{J}}}
\newcommand{\R}{{\mathbb{R}}}
\newcommand{\M}{{\mathbb{M}}}
\newcommand{\U}{{\mathbb{U}}}
\newcommand{\V}{{\mathbb{V}}}
\newcommand{\cP}{{\mathcal{P}}}
\newcommand{\cL}{{\mathcal{L}}}
\newcommand{\cR}{{\mathcal{R}}}
\newcommand{\cV}{{\mathcal{V}}}
\newcommand{\cA}{{\mathcal{A}}}
\newcommand{\cC}{{\mathcal{C}}}
\newcommand{\cD}{{\mathcal{D}}}
\newcommand{\cS}{{\mathcal{S}}}
\newcommand{\cW}{{\mathcal{W}}}
\newcommand{\cQ}{{\mathcal{Q}}}
\newcommand{\cH}{{\mathcal{H}}}
\newcommand{\cF}{{\mathcal{F}}}
\newcommand{\cE}{{\mathcal{E}}}
\newcommand{\X}{{\mathcal{X}}}
\newcommand{\I}{{\mathrm{I}}}
\newcommand{\Exp}{{\mathrm{Exp}}}
\newcommand{\Log}{{\mathrm{Log}}}
\newcommand{\St}{{\mathrm{St}}}
\newcommand{\tr}{{\mathrm{tr}}}
\newcommand{\rk}{{\mathrm{rank}}}
\newcommand{\Ct}{{\mathrm{Cut}}}
\newcommand{\Img}{{\mathrm{Im}}}
\newcommand{\Aut}{{\mathrm{Aut}}}
\newcommand{\Ad}{{\mathrm{Ad}}}
\newcommand{\ad}{{\mathrm{ad}}}
\newcommand{\inj}{{\mathrm{inj}}}
\newcommand{\Id}{{\mathrm{Id}}}
\newcommand{\g}{{\mathfrak{g}}}
\newcommand{\F}{{\mathcal{F}^{\mathrm{I}}}}
\newcommand{\Gr}{{G_{q}}}
\newcommand{\Gras}{{\mathrm{Gr}}}
\newcommand{\so}{{\mathfrak{so}(n)}}
\newcommand{\fm}{{\mathfrak{m}}}
\newcommand{\fk}{{\mathfrak{k}}}
\newcommand{\fv}{{\mathfrak{v}}}
\newcommand{\fS}{{\mathfrak{S}}}
\newcommand{\lb}{{\llbracket}}
\newcommand{\rb}{{\rrbracket}}
\newcommand{\pii}{{\pi^{\mathrm{I}}}}
\newcommand{\piio}{{\pi_{0}^{\mathrm{I}}}}
\newcommand{\piz}{{\pi_{0}}}
\newcommand{\son}{{\mathfrak{so}(n)}}
\newcommand{\Sy}{{\mathrm{Sym}}}
\newcommand{\lra}{{\longrightarrow}}
\newcommand{\lmp}{{~\longmapsto~}}
\newcommand\at[2]{\left.#1\right|_{#2}}
\newtheorem{rem}{Remark}
\newtheorem{prop}{Proposition}
\newtheorem{corr}{Corollary}
\newtheorem{lem}{Lemma}
\newtheorem{theo}{Theorem}
\newtheorem{defi}{Definition}
\newtheorem{Assum}{Assumption}
\begin{document}

\title{Effective formulas for the geometry of normal homogeneous spaces. Application to flag manifolds.}

\author{Dimbihery Rabenoro and Xavier Pennec}



\maketitle

\begin{abstract}
Consider a smooth manifold and an action on it of a compact connected Lie group with a bi-invariant metric. Then, any orbit is an embedded submanifold that is isometric to a normal homogeneous space for the group. In this paper, we establish new explicit and intrinsic formulas for the geometry of any such orbit. We derive our formula of the Levi-Civita connection from an existing generic formula for normal homogeneous spaces, i.e. which determines, a priori only theoretically, the connection. We say that our formulas are effective: they are directly usable, notably in numerical analysis, provided that the ambient manifold is convenient for computations. Then, we deduce new effective formulas for flag manifolds, since we prove that they are orbits under a suitable action of the special orthogonal group on a product of Grassmannians. This representation of them is quite useful, notably for studying flags of eigenspaces of symmetric matrices. Thus, we develop from it a geometric solution to the problem of perturbation of eigenvectors which is explicit and optimal in a certain sense, improving the classical analytic solution. 
\end{abstract}

\section{Introduction}\label{sect1}

A flag of $\R^{n}$ is a filtration $\mathcal{V}$ of linear subspaces $ V_0 = \{ 0 \} \subset V_1 \subset \ldots \subset V_{r} = \R^{n} $, where $r \geq 2$. For all $1 \leq i \leq r$, let $W_{i}$ be the orthogonal complement of $V_{i-1}$ in $V_{i}$. Then, the sequence $(W_{i})_{1 \leq i \leq r}$ is an equivalent characterization of the flag $\mathcal{V}$. In this paper,  we adopt this representation of flags. Thus, a flag of $\R^{n}$ is a sequence $\mathcal{W}=(W_{i})_{1 \leq i \leq r}$ of mutually orthogonal linear subspaces that spans $\R^{n}$. The sequence $\I=(q_{i})_{1 \leq i \leq r}$, where $q_{i} := \dim(W^{i})$, is called the \textit{type} of the flag $\mathcal{W}$. Thus, for $r=2$, we recover Grassmannians. It is well-known that the set of all flags of a given type has a structure of \textit{normal} homogeneous space for $SO(n)$, which we call a flag manifold.

\subsection{Context and main result}

Such spaces have recently been a rising subject of interest in statistical machine learning for generalizing Grassmannians and handling multiple subspaces at the same time, as in \cite{Nishimori Akaho and Plumbley 2006}. The geometric structure provides more intrinsic solutions and wider perspectives to issues from applications. Thus, it was shown in \cite{Pennec 2018} that PCA can be rephrased as an optimization on a flag manifold. Namely, one of the novelty of \cite{Pennec 2018} is to realize that the flag resulting from Principal Component Analysis (PCA) actually optimizes a criterion on the flag manifold: the accumulated unexplained variance (AUV). On this basis, new robust variants like the minimal accumulated unexplained p-variance have been proposed, even for manifold-valued data \cite{Pennec 2017}. The ubiquitous appearance of flags in applications, exemplified in \cite{Ye Wong and Lim 2022}, led these authors to develop tools for Riemannian optimization on flag manifolds. However, formulas of the Levi-Civita connection and the Riemannian curvature are not given therein. 

In the particular case of $\Gras(n, q)$, a formula for the connection is available in \cite{Bendokat Zimmermann and Absil 2020}, where $\Gras(n, q)$ is identified to orthogonal projectors. Nevertheless, this formula is not explicit, since it requires an extension of vector fields to the space $\Sy_{n}$ of symmetric matrices. An explicit formula is obtained in \cite{Absil Mahony and Sepulchre 2004}, but the representation of $\Gras(n, q)$ therein is less convenient than that of \cite{Bendokat Zimmermann and Absil 2020} which allows to express explicitly many Riemannian operations. 

Thus, one of our motivation was to obtain an explicit formula for the Levi-Civita connection of flag manifolds, in a representation of them that is convenient for computations. We say that such a formula is \textit{effective}. In this paper, we establish notably an explicit and intrinsic formula for the Levi-Civita connection of any normal homogeneous space: see Theorem $\ref{finalLCCapplied}$ in Section $\ref{sect3}$. Then, we deduce such formulas for flag manifolds, in a representation by orthogonal projectors which extends that of \cite{Bendokat Zimmermann and Absil 2020} for $\Gras(n, q)$: see Theorem $\ref{StatementTheoLCCflag}$ in Section $\ref{sect4}$.

\subsection{Effective formulas}

In practice, one can compare different representations of a given homogeneous space, in function of their efficiency for computations. In a general framework, we precise hereafter what we mean by effective formulas. Consider a manifold $\M$ and a smooth action on $\M$ of a compact connected Lie group $\G$ with a bi-invariant metric. For any orbit $\B$, fix a point $b_{0}$ in $\B$, called its origin. The \textit{main orbital map} is the surjective map $\piz$ defined by 
\begin{equation*}
\piz : \G \lra \B, \quad \piz(Q)=Q \cdot b_{0}
\end{equation*}

\noindent
Let $\K \subset \G$ be the isotropy group of $b_{0}$ and $\psi : \G \lra \G/\K$ the canonical quotient map. Then, by \cite[Proposition A.2]{Bendokat Zimmermann and Absil 2020}, $\B$ is an embedded submanifold of $\M$ and $\piz$ induces a diffeomorphism $\widehat{\piz} : \G/\K \lra \B$ such that $\piz=\widehat{\piz} \circ \psi$. Endowed with the quotient metric, 
$\G / \K$ is a \textit{normal homogeneous space}. Then, we endow $\B$ with the metric for which $\widehat{\piz}$ is an isometry. Thus, $\B$ is called a \textit{realization} of $\G / \K$ embedded in $\M$. We say that \textit{explicit} formulas for the geometry of any such orbit $\B$ are \textit{effective} in the sense that they are directly usable, notably in numerical analysis, provided that the ambient manifold $\M$ is convenient for computations. 

In this paper, we establish such effective formulas for the Levi-Civita connection and the sectional curvature of $\B$. These formulas simplify when $\B$ is additionally a \textit{symmetric space}. Contrarily to the Riemannian curvature, the connection is not a tensor, so that an explicit expression is much more difficult to obtain. Thus, we focus below on our method for the connection, based on a \textit{generic} formula for the Levi-Civita connection of normal homogeneous spaces, provided in \cite{Alekseevsky 1996} or in \cite{Arvanitoyeorgos 2003}. This formula is generic in the sense that it holds only for particular vector fields, the \textit{fundamental} ones, and it is implicit in \cite{Alekseevsky 1996} and \cite{Arvanitoyeorgos 2003} that it fully determines the connection. However, explicit formulas for arbitrary vector fields are not developed therein, nor elsewhere in the literature. 

Furthermore, in the numerical analysis literature, explicit formulas for the connection are derived in particular cases of such orbits, by specific developments on a case-by-case basis. Usually, the connection is obtained by associating to vector fields on such an orbit, other ones that are valued in a vector space, in which the connection is computed through usual derivation. Our method for any such orbit $\B$ is also based on this principle but we associate to any vector field on $\B$, a function valued in the Lie algebra $\g$ of $\G$. This lifting to $\g$ allows to exploit the rich structure of $\g$. Thus, the \textit{adjoint representation} plays an important role in our method.

\subsection{Our method for the connection}

By construction, the map $\piz : \G \lra \B$ is a \textit{Riemannian submersion}, which is essential in our method. Thus, in $\g$, we fix an orthonormal basis $(\epsilon_{k})_{k}$ of the horizontal space wrt $\piz$. We obtain readily that any vector field on $\B$ is decomposed into a linear combination of fundamental vector fields $(\epsilon_{k}^{*})_{k}$, associated to the $(\epsilon_{k})_{k}$. Then, we demonstrate that the coefficients of this decomposition are \textit{smooth functions} on a neighborhood of $b_{0}$ in $\B$. Thus, we establish a handleable expression for these coefficients, from which we prove their smoothness. The key point is the existence of a \textit{smooth} local section of $\piz : \G \lra \B$ which is induced by some diffeomorphisms between fibers under $\piz$, called \textit{holonomy maps}, built from the Riemannian submersion structure of $\piz$. 

From such a decomposition with \textit{smooth coefficients}, we deduce an explicit formula for arbitrary vector fields on $\B$ from the generic one, by applying the Leibniz rule for affine connections. Then, some factorizations allow us to establish a final explicit formula that is also \textit{intrinsic}, i.e. independent of the choice of the basis $(\epsilon_{k})_{k}$. Notice that a method using only O'Neill's formula \cite{O'Neill 1966} for the Riemannian submersion $\piz$ would not provide  such an explicit and intrinsic formula, for which we have fully exploited the structure of the Lie algebra $\g$.

\subsection{Flag manifolds as normal homogeneous spaces}\label{IntroAppliFlags}

By results of \cite{Nguyen 2022}, the flag manifold of a given type $\I=(q_{i})_{1 \leq i \leq r}$ is identified with an embedded submanifold of $\Sy_{n}$ that is diffeomorphic to 
$SO(n) / SO(\I)$, where 
\begin{equation}\label{OofI}
SO(\I) = SO(n) \cap O(\mathrm{I}) \quad \textrm{and} \quad O(\mathrm{I}):=\prod O(q_{i}). 
\end{equation}

\noindent
We endow $SO(n)$ with the metric $g^{O}$ induced by the Froebenius metric on matrices. Let $\son$ be the Lie algebra of $SO(n)$, i.e. the set of $n \times n$ skew-symmetric matrices. Then, this metric writes:
\begin{equation}\label{metricO}
g^{O}_{Q}\left(Q\Omega, Q\widetilde{\Omega} \right):=\frac{1}{2} \mathrm{tr} \left( \left(Q\Omega \right)^{T} Q\widetilde{\Omega} \right) = \frac{1}{2} \mathrm{tr} \left( \Omega^{T}\widetilde{\Omega} \right), 
\quad Q \in O(n), ~ \Omega, \widetilde{\Omega} \in \son. 
\end{equation}  

\noindent
Now, $g^{O}$ is a \textit{bi-invariant} metric and a negative multiple of the Killing form on $\son$. Thus, endowed with the quotient metric, a flag manifold is a \textit{standard normal homogeneous space}. Grassmannians are in addition \textit{symmetric spaces}, while all other flag manifolds are not even locally symmetric. 

In any case, our preceding results for normal homogeneous spaces should provide explicit formulas for the geometry of flag manifolds. However, instead of this embedding into $\Sy_{n}$, we consider hereafter a realization of $SO(n) / SO(\I)$ that is much more convenient for computations.

\subsection{Effective formulas for flag manifolds}

For $1 \leq q \leq n$, any linear subspace of dimension $q$ of $\R^{n}$ is identified with the orthogonal projector onto its range. Thus, the Grassmannian $\Gras(n, q)$ is identified with the following set
\begin{equation}\label{GrassProj}
\Gr = \left\{ P \in \R^{n \times n} : ~P^{2}=P ~;~ P^{T}=P ~;~ \rk(P)=q \right\} \subset \Sy_{n}.
\end{equation}

\noindent
By \cite{Bendokat Zimmermann and Absil 2020}, $\Gr$ is a realization of a quotient of $SO(n)$ embedded in $\Sy_{n}$. We generalize this representation to flag manifolds. Given a type $\I = (q_{i})_{1 \leq i \leq r}$ with $r \geq 3$, consider the product manifold 
\begin{equation}\label{flagProj}
G^{\I} := \prod G_{q_{i}}.
\end{equation}

\noindent
Now, the incremental subspaces of a flag are mutually orthogonal. Thus, the flag manifold of type $\I$ is identified with the submanifold $\F$ of $G^{\I}$ defined below, which is considered in \cite{Ye Wong and Lim 2022}: 
\begin{equation}\label{flagProj}
\F := \left\{ \mathcal{P}=(P_{i})_{1 \leq i \leq r} \in G^{\I} : ~P_{i}P_{j}=0, ~i \neq j  \right\}.
\end{equation}

\noindent
Now, consider the action of $SO(n)$ on $G^{\I}$ defined by 
\begin{equation}\label{actionOfOrtho}
(Q, \cP) \lmp Q \cdot \cP := \left( QP_{1}Q^{T}, ..., QP_{r}Q^{T}\right), \quad Q \in SO(n), \cP = (P_{i})_{i} \in G^{\I}.  
\end{equation}

\noindent
Let $\cP_{0} := \left( P_{0}^{i}\right)_{1 \leq i \leq r}$ be the \textit{standard flag}, where $P_{0}^{i}$ is the following block diagonal matrix:
\begin{equation}\label{DefStandardFlag}
P_{0}^{i}:=\mathrm{Diag}\left[ 0_{q_{1}}, ..., I_{q_{i}}, ..., 0_{q_{r}} \right],
\quad 1 \leq i \leq r.
\end{equation}

\noindent
This action appears in many domains involving flags, but without being formalized: see \cite{Anderson 1963} in statistics or \cite{Kato 1995} in perturbation theory. In \cite{Ye Wong and Lim 2022}, $\F$ is endowed with the restriction to $\F$ of the product metric $\sum g^{i}$ on $G^{\I}$, where for all $1 \leq i \leq r$, $g^{i}$ is the metric on $G_{q_{i}}$ defined in \cite{Bendokat Zimmermann and Absil 2020}. 

Now, we prove in paragraph $\ref{mainOMflag}$ that $\F$ is the orbit of $\cP_{0}$, whose isotropy group is $SO(\I)$ defined in $(\ref{OofI})$.  Thus, we introduce a \textit{new metric} on $\F$, for which $\F$ is a realization of $SO(n) / SO(\I)$ embedded in $G^{\I}$. Indeed, this metric does not coincide with that of \cite{Ye Wong and Lim 2022}: see Remark $\ref{metricDifferYeKe}$ in Section $\ref{sect4}$. Therefore, we deduce new explicit formulas for the Levi-Civita connection and the sectional curvature of $\F$ and $\Gr$ which are convenient for computations and can be implemented.

\subsection{Application to perturbation theory}

In $\F$, the main orbital map $\piio : SO(n) \lra \F$ is defined by $Q \lmp Q \cdot \cP_{0}$. Only the existence of a smooth local section of $\piio$ is required in our method for the connection. However, we wish to derive an explicit form of such a section. This section is based on holonomy maps, which are in turn built from the Logarithm map on $\F$. Now, the latter is not available in closed form on $\F$, but is on $\Gr$ : see \cite{Batzies Huper Machado and Silva Leite 2015}. In fact, the embedding of $\F$ into $G^{\I}$ allows us to derive an explicit expression for a section of the orbital map $\pii : Q \in O(n) \lmp Q \cdot \cP_{0}$, whose restriction to $SO(n)$ is $\piio$. 

Finally, we develop, from this section of $\pii$, a geometric solution to the problem of perturbation of eigenvectors. Indeed, the eigenspaces of a symmetric matrix form a flag. Our solution is \textit{explicit} and \textit{optimal} in a certain sense, improving the classical analytic one presented in \cite{Kato 1995}.

\subsection{Outline of the paper}

In Section $\ref{sect2}$, following \cite{Arvanitoyeorgos 2003}, we summarize results on the geometry of naturally reductive spaces, a class of homogeneous spaces including normal and symmetric ones. Thus, we recall notably the generic formulas for the Levi-Civita connection and the sectional curvature of normal homogeneous spaces. Then, in Section $\ref{sect3}$, we develop our method to derive the effective formulas for realizations of such spaces. Finally, in Section $\ref{sect4}$, we study the geometry of flag manifolds by applying the results of Section $\ref{sect3}$ and treat geometrically the question of perturbation of eigenvectors.

\section{Survey on homogeneous spaces}\label{sect2}

In this section, we survey the hierarchy of classes of homogeneous spaces aforementioned. We also summarize in tables the formulas for their geometry, which illustrate how these formulas follow from those for naturally reductive spaces. In particular, we locate precisely flag manifolds and Grassmannians within this hierarchy. These results are used in the following Sections.

The set of all smooth vector fields on a smooth manifold $\M$ is denoted by $\X(\M)$. For $X, Y \in \X(\M)$, their Lie bracket is denoted by $\left[ X, Y \right]^{\M}$ as a vector field. Throughout this section, $\G$ is a \textit{connected} Lie group with neutral element $e$ and Lie algebra $\g$. $\G$ is endowed with a metric $g^{\G}$ whose restriction to $\g$ is denoted by $\left\langle \cdot, \cdot \right\rangle := g^{\G}_{e}$.

\subsection{Preliminaries}

For $u, v \in \g$, their Lie bracket in $\g$ is denoted by $\lb u, v \rb$, where the double brackets allow to distinguish from the Lie bracket of vector fields on $\G$, denoted by $\left[ \cdot, \cdot \right]^{\G}$. Then, by definition, 
\begin{equation}\label{LieBraG}
\lb u, v \rb = \left[ L(u), L(v) \right]^{\G}_{e},
\end{equation} 

\noindent
where $L(u)$ is the left invariant vector field on $\G$ generated by $u$. Thus, if $\G$ is a matrix Lie group, then the Lie bracket in $\g$ is the commutator of matrices. Let $\K$ be a closed subgroup of $\G$ and $\psi : \G \lra \G/\K$ the canonical quotient map. The element $o:=\psi(e) = e\K$ of $\G / \K$ is called its \textit{origin}. Let $\fk$ be the Lie algebra of $\K$. Then, 
\begin{equation}\label{kerTangentPi}
\fk = \ker(d_{e}\psi). 
\end{equation}

\noindent
The \textit{orbital map} at $Q\K \in \G/\K$ is the map $\psi_{Q\K} : \G \lra \G/\K$ defined by
\begin{equation}\label{orbitalMapGK}
\psi_{Q\K}(Q') = Q' \cdot (Q\K) = (Q'Q)\K, \quad Q' \in \G. 
\end{equation}

\noindent
Thus, $\psi_{o}=\psi$. For $u \in \g$, the \textit{fundamental vector field} $\widetilde{u} \in \X(\M)$ is defined by 
\begin{equation*}
\widetilde{u}(Q\K) = \left(d_{e} \psi_{Q\K}\right)(u), \quad Q\K \in \G/\K. 
\end{equation*}

\noindent
Let $J_{Q} : \G \lra \G$ be the map defined by $J_{Q}(R) = QRQ^{-1}$. For all $Q \in \G$, the map $\Ad(Q) := d_{e}J_{Q}$ is a Lie algebra automorphism of $\g$. Then, the map $\Ad : \G \lra \mathrm{Aut}(\g)$ is a morphism of groups called the \textit{adjoint representation} of $\G$ and the map $\ad : \g \lra \mathrm{End}(\g)$ defined by $ \ad := d_{e}\Ad$ is a Lie algebra homomorphism called the \textit{adjoint representation} of $\g$. By Theorem 2.8. in \cite{Arvanitoyeorgos 2003},
\begin{equation}\label{adBracket}
(\ad(u))(v) = \lb u, v \rb, \quad u, v \in \g. 
\end{equation}

\subsection{Naturally reductive spaces}

The homogeneous space $\G / \K$ is called \textit{reductive} if there exists a decomposition $\g = \fk \oplus \fm$, where  $\fm$ is an $\Ad(\K)$-invariant subspace called a Lie subspace. Then, the restriction of $d_{e}\psi$ to $\fm$ is a linear isomorphism: 
\begin{equation}\label{tangentSpaceO}
\left( d_{e}\psi \right)_{| \fm}: 
\fm \simeq T_{o}\G / \K. 
\end{equation}

\begin{prop}\label{scaProdMetric}
On a reductive space $\G / \K$, requiring that the isomorphism in $(\ref{tangentSpaceO})$ is an isometry establishes a one-to-one correspondence between $\Ad(\K)$-invariant scalar products on $\fm$ and $\G$-invariant metrics on $\G / \K$.  
\end{prop}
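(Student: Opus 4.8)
The plan is to establish the correspondence by exhibiting the two directions of the bijection explicitly, using the linear isomorphism $\left( d_{e}\psi \right)_{|\fm} : \fm \simeq T_{o}\G/\K$ from $(\ref{tangentSpaceO})$ to transport one structure to the other, and then checking compatibility of the invariance conditions on each side.

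First I would go from metrics to scalar products. Given a $\G$-invariant metric $g$ on $\G/\K$, define $\langle u, v \rangle_{\fm} := g_{o}\big( (d_{e}\psi)(u), (d_{e}\psi)(v) \big)$ for $u, v \in \fm$. This is manifestly a scalar product on $\fm$ (positive-definiteness comes from $g_{o}$ being an inner product and $(d_{e}\psi)_{|\fm}$ being injective), and it is the unique one making $(\ref{tangentSpaceO})$ an isometry. I then need to verify it is $\Ad(\K)$-invariant. The key computation is the equivariance relation $\psi \circ L_{k} = k \cdot \psi$ where $L_{k}$ is left translation by $k \in \K$; differentiating at $e$ and using $d_{e}L_{k} = \Ad(k)$ modulo the identification (more precisely, restricting to $\fm$ and using that $\fm$ is $\Ad(\K)$-invariant so the action of $k$ on $T_{o}\G/\K$ corresponds to $\Ad(k)$ on $\fm$), together with the $\G$-invariance of $g$ which forces $k \cdot : T_{o}\G/\K \to T_{o}\G/\K$ to be an isometry, yields $\langle \Ad(k)u, \Ad(k)v \rangle_{\fm} = \langle u, v \rangle_{\fm}$.

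For the reverse direction, given an $\Ad(\K)$-invariant scalar product $\langle \cdot, \cdot \rangle_{\fm}$ on $\fm$, I first define $g_{o}$ on $T_{o}\G/\K$ by transporting via $\big((d_{e}\psi)_{|\fm}\big)^{-1}$, and then extend to all of $\G/\K$ by $\G$-invariance: set $g_{Q\K}(X, Y) := g_{o}\big( d_{Q\K}(Q^{-1}\cdot)X, d_{Q\K}(Q^{-1}\cdot)Y \big)$. The content here is well-definedness: if $Q\K = Q'\K$ then $Q' = Qk$ for some $k \in \K$, and the two formulas agree precisely because $\langle \cdot, \cdot \rangle_{\fm}$ is $\Ad(\K)$-invariant and the isotropy action of $\K$ at $o$ corresponds to $\Ad(\K)$ on $\fm$. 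Smoothness follows since the construction is equivariant and one can realize $g$ locally via a smooth local section of $\psi$. By construction this $g$ is $\G$-invariant and makes $(\ref{tangentSpaceO})$ an isometry, and the two constructions are visibly inverse to one another.

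The main obstacle is the bookkeeping around the identification $(\ref{tangentSpaceO})$: one must carefully track how the isotropy action of $k \in \K$ on the tangent space $T_{o}\G/\K$ corresponds, under $(d_{e}\psi)_{|\fm}$, to the restriction of $\Ad(k)$ to $\fm$ — this uses both $\psi \circ J_{k} = (k\cdot) \circ \psi$ (since $J_{k}(Q)\K = kQk^{-1}\K = kQ\K$ as $k \in \K$) and the $\Ad(\K)$-invariance of $\fm$ to ensure $\Ad(k)$ preserves $\fm$. Once this correspondence of actions is nailed down, the matching of the invariance conditions on both sides, and hence the bijectivity, is essentially formal. I would also note that $\G$-invariance of $g$ is what guarantees $g$ is determined by $g_{o}$ alone, which is why transporting a single scalar product on $\fm$ suffices and no further compatibility across different cosets need be imposed.
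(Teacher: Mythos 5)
Your argument is correct and is the standard one; the paper itself does not supply a proof but simply cites Proposition 22, p.~311 of O'Neill's \emph{Semi-Riemannian Geometry}, which gives essentially the same construction. Both directions of your correspondence are sound: transporting via $(d_e\psi)_{|\fm}$, the $\Ad(\K)$-invariance of the scalar product translates into invariance of $g_o$ under the isotropy representation (and vice versa), and $\G$-invariance propagates $g_o$ uniquely to a smooth metric on the whole of $\G/\K$.

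One small remark on precision: you first invoke $\psi \circ L_k = (k\cdot) \circ \psi$ and write ``$d_eL_k = \Ad(k)$ modulo the identification'', which as stated is not quite right, since $d_eL_k : T_e\G \to T_k\G$ and is not the adjoint representation. You then correctly switch to $\psi \circ J_k = (k\cdot) \circ \psi$, which is the clean identity to differentiate (since $J_k(e)=e$ and $d_eJ_k = \Ad(k)$). It is worth noting that the two routes actually agree because $\psi \circ R_{k^{-1}} = \psi$, so $d_k\psi \circ d_eL_k = d_e\psi \circ d_k R_{k^{-1}} \circ d_eL_k = d_e\psi \circ \Ad(k)$; the right-$\K$-invariance of $\psi$ is the hidden step that makes the sloppier formulation come out right. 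If you present this argument, leading with $J_k$ (or explicitly invoking $\psi \circ R_{k^{-1}} = \psi$) avoids the detour.
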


\begin{proof}
See Proposition 22 p.311 in \cite{O'Neill 1983}.
\end{proof}

\begin{defi}

We say that $\G / \K$ is \textit{naturally reductive} if,  $\G / \K$ is reductive and additionally, 
\begin{equation}\label{condNatRed}
\left\langle \lb u, v \rb_{\fm} , w \right\rangle +
\left\langle v, \lb u, w \rb_{\fm} \right\rangle = 0, \quad u, v, w \in \fm.  
\end{equation}

\end{defi}

\noindent
The following Proposition describes the geometry of naturally reductive spaces. 

\begin{prop}
Let $\G / \K$ be a naturally reductive space. Then, its Levi-Civita connection is 
\begin{equation}\label{LCCNatRed}
\left( \nabla_{\widetilde{u}} \widetilde{v} \right)_{o} = -\frac{1}{2} d_{e}\psi \left(\lb u, v \rb_{\fm} \right), \quad u,v \in \fm
\end{equation}

\noindent
and its sectional curvature is 
\begin{equation}\label{CurvNatRed}
K_{o}\left( \widetilde{u} , \widetilde{v} \right) = \frac{1}{4} \left\| \lb u,v \rb_{\fm} \right\|^{2} +
\left\langle \lb \lb u,v \rb_{\fk}, u \rb_{\fm}, v \right\rangle.
\end{equation}
\end{prop}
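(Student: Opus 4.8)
The plan is to prove both formulas by specializing the general theory of homogeneous spaces to the naturally reductive case, rather than by direct Koszul computation. First I would recall that any $\G$-invariant metric on a reductive space $\G/\K$ is completely determined by the $\Ad(\K)$-invariant scalar product $\langle\cdot,\cdot\rangle$ on $\fm$ it induces via Proposition $\ref{scaProdMetric}$, so it suffices to compute $\nabla$ and $K$ at the origin $o$ and against the fundamental vector fields $\widetilde{u}$, $u\in\fm$, which span $T_{o}\G/\K$ under $(\ref{tangentSpaceO})$. The standard tool is the formula of Nomizu: for the metric connection on a reductive homogeneous space, the value of $\nabla_{\widetilde{u}}\widetilde{v}$ at $o$ equals $\Lambda(u)v$ where $\Lambda$ is the Nomizu operator, together with the identity $\Lambda(u)v-\Lambda(v)u=-\llbracket u,v\rrbracket_{\fm}$ (torsion-freeness, using that $[\widetilde{u},\widetilde{v}]^{\G/\K}_{o}=-d_{e}\psi(\llbracket u,v\rrbracket_{\fm})$) and the identity expressing metric compatibility in terms of $\langle\cdot,\cdot\rangle$ on $\fm$.

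The key step is then to observe that the natural reductivity condition $(\ref{condNatRed})$ says exactly that the skew-symmetric bilinear map $(u,v)\mapsto -\tfrac12\llbracket u,v\rrbracket_{\fm}$ is metric, i.e. $\langle -\tfrac12\llbracket u,v\rrbracket_{\fm},w\rangle+\langle v,-\tfrac12\llbracket u,w\rrbracket_{\fm}\rangle=0$, so the candidate operator $\Lambda(u)v:=-\tfrac12\llbracket u,v\rrbracket_{\fm}$ is both skew-symmetric with respect to the metric and satisfies the torsion identity; by uniqueness of the Levi-Civita connection, it must be the Nomizu operator, which gives $(\ref{LCCNatRed})$. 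For the curvature formula $(\ref{CurvNatRed})$, I would plug this explicit $\Lambda$ into the general curvature expression $R(\widetilde{u},\widetilde{v})\widetilde{v}\big|_{o}$ written in terms of the Nomizu operator — namely $R(u,v)w=[\Lambda(u),\Lambda(v)]w-\Lambda(\llbracket u,v\rrbracket_{\fm})w-\ad_{\fk}(\llbracket u,v\rrbracket_{\fk})w$ or the equivalent form in \cite{Arvanitoyeorgos 2003} — then compute $K_{o}(\widetilde{u},\widetilde{v})=\langle R(u,v)v,u\rangle$ for orthonormal $u,v$, using $(\ref{adBracket})$ to rewrite $\ad(\llbracket u,v\rrbracket_{\fk})v=\llbracket\llbracket u,v\rrbracket_{\fk},v\rrbracket$ and the skew-symmetry of $\ad$ on $\fm$ coming again from $(\ref{condNatRed})$ to collapse the bracket terms into the stated $\tfrac14\|\llbracket u,v\rrbracket_{\fm}\|^{2}+\langle\llbracket\llbracket u,v\rrbracket_{\fk},u\rrbracket_{\fm},v\rangle$.

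Concretely I would organize it as: (i) recall the Nomizu operator and its characterization via torsion-freeness and metric compatibility; (ii) verify the candidate $\Lambda(u)v=-\tfrac12\llbracket u,v\rrbracket_{\fm}$ satisfies both, invoking $(\ref{condNatRed})$ for skew-symmetry and the bracket computation $[\widetilde{u},\widetilde{v}]^{\G/\K}_{o}=-d_{e}\psi(\llbracket u,v\rrbracket_{\fm})$ for torsion; (iii) conclude $(\ref{LCCNatRed})$ by uniqueness; (iv) substitute into the general curvature formula and simplify using $(\ref{adBracket})$ and natural reductivity to obtain $(\ref{CurvNatRed})$. The main obstacle is bookkeeping in step (iv): one must carefully track the $\fm$- and $\fk$-components of iterated brackets (e.g. terms like $\llbracket u,\llbracket u,v\rrbracket_{\fm}\rrbracket_{\fm}$ versus $\llbracket u,\llbracket u,v\rrbracket_{\fk}\rrbracket_{\fm}$) and repeatedly exploit $(\ref{condNatRed})$ to cancel and regroup, so that the $\fk$-contribution survives only in the single term shown; getting the constants $\tfrac14$ versus $\tfrac12$ right is where sign and factor errors creep in. Alternatively, this Proposition is classical (it appears as Theorem 3.4 and the subsequent curvature computation in \cite{Arvanitoyeorgos 2003}), so if one prefers, steps (i)--(iv) can be replaced by a direct citation; I would nonetheless sketch the uniqueness argument in (ii)--(iii) since it is short and makes the paper self-contained.
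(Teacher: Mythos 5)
The paper's ``proof'' of this Proposition is a one-line citation to Proposition~5.2 and Theorem~5.3 of \cite{Arvanitoyeorgos 2003}; you instead sketch an actual derivation via the Nomizu operator, only mentioning at the end that a citation is also possible. Both routes are valid, so this is a genuine difference in approach rather than a gap. Your argument buys self-containedness: characterizing $\G$-invariant affine connections on a reductive $\G/\K$ by Nomizu operators $\Lambda:\fm\to\mathrm{End}(\fm)$, checking that $\Lambda(u)v=-\tfrac12\lb u,v\rb_{\fm}$ satisfies torsion-freeness (using $[\widetilde{u},\widetilde{v}]_{o}=-d_{e}\psi(\lb u,v\rb_{\fm})$) and metric-compatibility (which is exactly the natural reductivity condition $(\ref{condNatRed})$), and invoking uniqueness of the Levi-Civita connection among $\G$-invariant metric torsion-free ones; then plugging $\Lambda$ into the general Nomizu curvature formula and simplifying. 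This is essentially the textbook proof you would find behind the citation. The one step you leave compressed that deserves a word is why the Levi-Civita connection of the invariant metric is itself $\G$-invariant and hence of Nomizu type --- this follows since $\G$ acts by isometries and isometries preserve the Levi-Civita connection (the paper's Lemma~$\ref{lemNaturalityLCC}$), but it is the hinge on which your uniqueness argument turns, so it should be stated. You also flag honestly that step~(iv), the bookkeeping of $\fm$- and $\fk$-components in iterated brackets, is where errors creep in; that is correct, and it is precisely the length of that computation that makes the paper's choice to cite rather than reprove the economical one. In short: your proposal is correct, more detailed than what the paper does, and would make the exposition more self-contained at the cost of roughly a page of bracket algebra that is already available in \cite{Arvanitoyeorgos 2003}.
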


\begin{proof}
We refer respectively to Proposition 5.2 and Theorem 5.3 in \cite{Arvanitoyeorgos 2003}. 
\end{proof}

\subsection{Normal homogeneous spaces}

In this subsection, we assume that $\G$ is in addition \textit{compact} and that $g^{\G}$ is a \textit{bi-invariant} metric on $\G$. Let $\K$ be a closed subgroup of $\G$. Then, $\G/\K$ is called a \textit{normal homogeneous space} when $\G/\K$ is endowed with the quotient metric of $g^{\G}$ by the right action of $\K$, denoted by $\overline{g}^{\G}$. If we assume additionally that $\G$ is \textit{semisimple}, then one can take for $g^{\G}$ the bi-invariant metric induced by any negative multiple of the Killing form. In this case, we say that $\G / \K$ is a \textit{standard normal} homogeneous space. The following result allows to study normal homogeneous spaces through their Lie algebras. 

\begin{prop}\label{biInvScal}
Any bi-invariant metric on a Lie group $\G$ is induced by a unique $\Ad(\G)$-invariant scalar product on its Lie algebra  $\g$. This last condition is equivalent to 
\begin{equation}\label{AdInvariance} 
\left\langle \lb u,v \rb, w \right\rangle = \left\langle u, \lb v,w \rb \right\rangle, \quad u, v, w \in \g. 
\end{equation}

\end{prop}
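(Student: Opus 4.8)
The plan is to prove both directions of the stated equivalence in Proposition \ref{biInvScal}, together with the uniqueness part. I would organize this as three short arguments.

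\textbf{Uniqueness and the correspondence metric $\leftrightarrow$ scalar product.} First I would observe that any left-invariant metric on $\G$ is completely determined by its value $\left\langle \cdot, \cdot \right\rangle := g^{\G}_{e}$ on $\g$, since left translations are isometries. Conversely, any scalar product on $\g$ extends by left translation to a left-invariant metric. So the real content is: \emph{among} left-invariant metrics, the bi-invariant ones correspond exactly to $\Ad(\G)$-invariant scalar products. I would prove that a left-invariant metric is also right-invariant if and only if $g^{\G}_{e}$ is $\Ad(\G)$-invariant, i.e. $\left\langle \Ad(Q)u, \Ad(Q)v \right\rangle = \left\langle u,v \right\rangle$ for all $Q \in \G$, $u,v \in \g$. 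This follows because the map $R_{Q^{-1}} \circ L_{Q} = J_{Q}$ has differential $\Ad(Q)$ at $e$; combining left-invariance (already assumed) with right-invariance at $e$ gives the $\Ad$-invariance of the scalar product, and conversely $\Ad$-invariance of the scalar product plus left-invariance yields right-invariance by running the same computation backwards and then translating. Uniqueness is then immediate: the scalar product is just $g^{\G}_{e}$.

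\textbf{From $\Ad(\G)$-invariance to the bracket identity $(\ref{AdInvariance})$.} I would fix $u, v, w \in \g$ and a smooth curve $t \mapsto \exp(tu)$ in $\G$. Applying $\Ad$-invariance gives $\left\langle \Ad(\exp(tu))v, \Ad(\exp(tu))w \right\rangle = \left\langle v, w \right\rangle$ for all $t$. Differentiating at $t=0$ and using $\frac{d}{dt}\big|_{t=0}\Ad(\exp(tu)) = \ad(u)$ together with $(\ref{adBracket})$, namely $\ad(u)v = \lb u,v \rb$, I get $\left\langle \lb u,v \rb, w \right\rangle + \left\langle v, \lb u,w \rb \right\rangle = 0$. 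Rearranging and using skew-symmetry of the bracket in the appropriate slot (i.e. $\lb u,w \rb = -\lb w,u \rb$) yields $\left\langle \lb u,v \rb, w \right\rangle = \left\langle v, \lb w,u \rb \right\rangle = \left\langle u, \lb v,w \rb \right\rangle$ after a cyclic relabeling, which is exactly $(\ref{AdInvariance})$.

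\textbf{From $(\ref{AdInvariance})$ back to $\Ad(\G)$-invariance.} Conversely, assuming the bracket identity, the same differentiation argument shows that $t \mapsto \left\langle \Ad(\exp(tu))v, \Ad(\exp(tu))w \right\rangle$ has zero derivative at every $t$ (by replacing $v,w$ with $\Ad(\exp(tu))v, \Ad(\exp(tu))w$ at each point and invoking $\ad(u)$ as the infinitesimal generator), hence this function is constant, equal to $\left\langle v,w \right\rangle$. This gives $\Ad(\exp(tu))$-invariance for all $t$ and all $u \in \g$; since $\G$ is connected, it is generated by such exponentials, so we conclude $\Ad(Q)$-invariance for all $Q \in \G$.

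\textbf{Expected main obstacle.} The routine part is the differentiation of $\Ad$; the only genuinely delicate point is the converse direction, where one must be careful that constancy of the function along each one-parameter subgroup, plus connectedness of $\G$, really does give global $\Ad(\G)$-invariance — this uses that a connected Lie group is generated by the image of the exponential map (equivalently, by any neighborhood of $e$), which I would cite rather than prove. A secondary subtlety is bookkeeping the skew-symmetry and cyclic relabeling so that the identity comes out in precisely the form $(\ref{AdInvariance})$ rather than an equivalent permuted version; this is purely formal but worth stating cleanly.
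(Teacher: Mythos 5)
Your proof is correct, but note that the paper does not actually prove this statement: its ``proof'' is simply the citation \emph{See Proposition 3.9 in Arvanitoyeorgos (2003)}. So there is no in-paper argument to compare against; what you have produced is the standard self-contained proof of that cited result. The structure is right. The reduction of bi-invariance to $\Ad(\G)$-invariance via $J_{Q} = \cR_{Q^{-1}} \circ \cL_{Q}$ and $d_{e}J_{Q} = \Ad(Q)$ is the standard translation argument, and uniqueness follows trivially since the scalar product is forced to be $g^{\G}_{e}$. The forward direction by differentiating $\left\langle \Ad(\exp(tu))v, \Ad(\exp(tu))w \right\rangle$ at $t=0$ and using $(\ref{adBracket})$ correctly yields $\left\langle \lb u,v \rb, w \right\rangle + \left\langle v, \lb u,w \rb \right\rangle = 0$, and your cyclic relabeling does land on $(\ref{AdInvariance})$ exactly: from $\left\langle \lb u,v \rb, w \right\rangle = \left\langle v, \lb w,u \rb \right\rangle$, the substitution $(u,v,w) \to (w,u,v)$ gives $\left\langle \lb w,u \rb, v \right\rangle = \left\langle u, \lb v,w \rb \right\rangle$, so combining gives $\left\langle \lb u,v \rb, w \right\rangle = \left\langle u, \lb v,w \rb \right\rangle$. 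The converse is also fine: $(\ref{AdInvariance})$ with $u$ and $v$ swapped gives skew-symmetry of $\ad(u)$, hence the function $t \mapsto \left\langle \Ad(\exp(tu))v, \Ad(\exp(tu))w \right\rangle$ has vanishing derivative for all $t$, and connectedness of $\G$ (explicitly assumed at the start of Section $\ref{sect2}$, so you are entitled to use it) upgrades invariance under the one-parameter subgroups to invariance under all of $\G$. One minor point worth making explicit when you write this up: the implication from $\Ad(\G)$-invariance to $(\ref{AdInvariance})$ does not use connectedness, but the reverse implication does, so the ``equivalence'' in the Proposition is really an equivalence only under the standing connectedness hypothesis.
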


\begin{proof}
See Proposition $3.9.$ in \cite{Arvanitoyeorgos 2003}. 
\end{proof}

\begin{corr}\label{corrNormalHom}
$(i)$ A normal homogeneous space $\G/\K$ is naturally reductive, with Lie subspace $\fm_{n}$, which is the orthogonal of $\fk$ in $\g$ wrt $\left\langle \cdot, \cdot \right\rangle$ i.e. 
\begin{equation*}
\fm_{n} = \fk^{\perp} = \left\{ u \in \g : \left\langle u, \fk \right\rangle = 0 \right\}. 
\end{equation*}

\noindent
$(ii)$ The quotient metric $\overline{g}^{\G}$ is a $\G$-invariant metric on $\G / \K$. 

\end{corr}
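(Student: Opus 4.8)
The plan is to prove the two assertions separately, using Proposition \ref{biInvScal} for $(i)$ and the submersion structure of $\psi$ together with Proposition \ref{scaProdMetric} for $(ii)$.

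For $(i)$, I would first check that $\G/\K$ is reductive with Lie subspace $\fm_{n}=\fk^{\perp}$. Since $\langle\cdot,\cdot\rangle$ is a positive-definite scalar product on $\g$, we have the vector-space splitting $\g=\fk\oplus\fk^{\perp}$. For any $k\in\K$ the conjugation $J_{k}$ maps $\K$ into $\K$ and fixes $e$, so $\Ad(k)=d_{e}J_{k}$ preserves $\fk=T_{e}\K$; and because $g^{\G}$ is bi-invariant, Proposition \ref{biInvScal} tells us that $\Ad(\G)$, hence $\Ad(\K)$, acts by isometries of $\langle\cdot,\cdot\rangle$, so $\Ad(\K)$ also preserves $\fk^{\perp}=\fm_{n}$. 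Thus $\fm_{n}$ is a Lie subspace and $\G/\K$ is reductive. Next I would verify the naturally reductive identity $(\ref{condNatRed})$. The key observation is that the equivalent form $(\ref{AdInvariance})$ of $\Ad(\G)$-invariance states precisely that each operator $\ad(u)$, $u\in\g$, is skew-symmetric for $\langle\cdot,\cdot\rangle$, i.e. $\langle\lb u,v\rb,w\rangle+\langle v,\lb u,w\rb\rangle=0$ for all $u,v,w\in\g$. Now take $u,v,w\in\fm_{n}$. Writing $\lb u,v\rb=\lb u,v\rb_{\fk}+\lb u,v\rb_{\fm}$ and using $w\perp\fk$ gives $\langle\lb u,v\rb_{\fm},w\rangle=\langle\lb u,v\rb,w\rangle$; likewise $\langle v,\lb u,w\rb_{\fm}\rangle=\langle v,\lb u,w\rb\rangle$ since $v\perp\fk$. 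Adding these and invoking the skew-symmetry of $\ad(u)$ yields $(\ref{condNatRed})$, so $\G/\K$ is naturally reductive.

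For $(ii)$, since $g^{\G}$ is in particular right-$\K$-invariant, the right $\K$-action on $(\G,g^{\G})$ is by isometries, so $\psi$ descends to a Riemannian submersion onto $(\G/\K,\overline{g}^{\G})$; in particular the vertical space at $e$ is $\ker d_{e}\psi=\fk$, the horizontal space is $\fm_{n}=\fk^{\perp}$, and $(d_{e}\psi)_{|\fm_{n}}:\fm_{n}\to T_{o}\G/\K$ is an isometry, i.e. $\overline{g}^{\G}$ restricts at $o$ to the pushforward of $\langle\cdot,\cdot\rangle_{|\fm_{n}}$, which is an $\Ad(\K)$-invariant scalar product by $(i)$. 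Finally, for each $Q\in\G$ the left translation $L_{Q}$ on $\G$ is an isometry of $g^{\G}$ that commutes with the right $\K$-action, hence induces a diffeomorphism of $\G/\K$ which is an isometry of $\overline{g}^{\G}$; therefore $\overline{g}^{\G}$ is $\G$-invariant. By the uniqueness statement of Proposition \ref{scaProdMetric}, $\overline{g}^{\G}$ is then the $\G$-invariant metric associated with $\langle\cdot,\cdot\rangle_{|\fm_{n}}$.

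The only genuinely delicate point is the bookkeeping in $(ii)$: one must be careful that the ``quotient metric'' $\overline{g}^{\G}$ is well-defined as a Riemannian-submersion metric (this uses right-$\K$-invariance of $g^{\G}$, guaranteed by bi-invariance) and that its value at $o$ is exactly $\langle\cdot,\cdot\rangle_{|\fm_{n}}$ transported by $d_{e}\psi$. Part $(i)$ is then a short computation once the skew-symmetry of $\ad(u)$ has been extracted from $(\ref{AdInvariance})$.
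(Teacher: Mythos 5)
Your proposal is correct and follows essentially the same route as the paper: Proposition \ref{biInvScal} to obtain $\Ad(\G)$-invariance of $\langle\cdot,\cdot\rangle$, stability of $\fk^{\perp}$ under $\Ad(\K)$ for reductivity, the implication $(\ref{AdInvariance})\Rightarrow(\ref{condNatRed})$ for natural reductivity, and Proposition \ref{scaProdMetric} for part $(ii)$. The extra details you supply — the explicit orthogonal-projection computation verifying $(\ref{condNatRed})$, and the observation that left translations descend to isometries of $\overline{g}^{\G}$ — are sound elaborations of steps the paper leaves implicit, and in fact make the logic of $(ii)$ tighter than the paper's terse appeal to Proposition \ref{scaProdMetric}.
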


\begin{proof}
By Proposition $\ref{biInvScal}$, $\left\langle \cdot, \cdot \right\rangle := g^{\G}_{e}$ is $\Ad(\G)$-invariant on $\g$. Now, for all $Q \in \K$, $(\Ad(Q))(\fk)=\fk$. Therefore, $\fm_{n}$ is stable under $\Ad(\K)$, so that $\G / \K$ is \textit{reductive}. Then, $\G/\K$ is indeed naturally reductive, since $(\ref{AdInvariance})$ implies that $(\ref{condNatRed})$ holds. Then, we prove the second assertion. By defintion of the quotient metric $\overline{g}^{\G}$, the map 
\begin{equation*}
\left( d_{e}\psi \right)_{| \fm_{n}}: \left( \fm_{n}, \left\langle \cdot, \cdot \right\rangle_{|\fm_{n}} \right)
 \lra \left( T_{o}\G / \K, \overline{g}^{\G} \right)
\end{equation*}

\noindent
is an isometry. So, by Proposition $\ref{scaProdMetric}$, $\overline{g}^{\G}$ is a $\G$-invariant metric on $\G / \K$.
\end{proof}

\noindent\\
Since such a space is naturally reductive, its Levi-Civita connection is given by $(\ref{LCCNatRed})$, where the Lie subspace is $\fm_{n}$. By definition of $\fm_{n}$ and $(\ref{kerTangentPi})$, we have that $\g = \fm_{n} \oplus \fk = \fm_{n} \oplus \mathrm{ker} (d_{e}\psi)$. So, 
\begin{equation}\label{LCCnormal}
\left( \nabla_{\widetilde{u}} \widetilde{v} \right)_{o} = -\frac{1}{2} d_{e}\psi \left(\lb u, v \rb_{\fm_{n}} \right) = -\frac{1}{2} d_{e}\psi \left(\lb u, v \rb \right). 
\end{equation}

\noindent
The sectional curvature is computed by simplifying $(\ref{CurvNatRed})$, using that $\langle \fm_{n}, \fk \rangle = 0$ and $(\ref{AdInvariance})$. So, 
\begin{equation}\label{curvaNormal}
K_{o}(\widetilde{u} , \widetilde{v}) = \left\| \lb u,v \rb_{\fk} \right\|^{2} + \frac{1}{4}\left\| \lb u,v \rb_{\fm_{n}} \right\|^{2}. 
\end{equation}

\subsection{Symmetric spaces}

\subsubsection{Locally symmetric spaces}

First, we introduce locally symmetric spaces, although they are not in general homogeneous. For $x \in \M$, denote by $\inj_{x}$ the injectivity radius at $x$. The geodesic symmetry through $x$ is the map $s_{x}$ defined by  
\begin{equation}\label{defGeoSym}
s_{x} : B(x, \inj_{x}) \lra  B(x, \inj_{x}), \quad 
s_{x}(y)=\Exp_{x}^{\M} \left( - \left(\Exp_{x}^{\M}\right)^{-1} \left( y \right) \right).
\end{equation}

\noindent
By definition, a locally symmetric space is a Riemannian manifold $(\M, g)$ such that for all $x \in \M$, $s_{x}$ is an isometry on a neighbourhood of $x$.

\subsubsection{Symmetric spaces}

A Riemannian globally symmetric space is a connected Riemannian manifold $(\M, g)$ such that for all $x \in \M$, there exists an isometry $s_{x}$ of $\M$ such that $s_{x}(x) = x$ and $d_{x}s_{x} = - \Id_{T_{x}\M}$. Such a space, simply called a \textit{symmetric space} is a locally symmetric one, and is complete. Let $\G:=\mathrm{Isom}_{0}(\M)$ be the connected component of the identity in the group of isometries of $\M$. Then, $\G$ is a connected Lie group which acts transitively on $\M$. So, $\M$ is identified with a \textit{homogeneous space} $\G / \K$. For fixed $x \in \M$, the map $\sigma_{x} : \G \lra \G$ defined by $Q \lmp s_{x} \circ Q \circ s_{x}$ is an involutive automorphism of $\G$ called a \textit{Cartan involution}, such that $\left(\G_{\sigma_{x}}\right)_{0} \subset \K \subset \G_{\sigma_{x}}$, where $\K$ is the isotropy group of $x$ and $\G_{\sigma_{x}}:=\mathrm{Fix}(\sigma_{x})=\left\{ u \in \g : \sigma_{x}(g)= g \right\}$. In fact, by results of \cite{O'Neill 1983}, a symmetric space is equivalent to a \textit{symmetric data} $(\G / \K, \sigma, B)$ defined below.  

\begin{defi}\label{symData}
A symmetric data is a triple $(\G / \K, \sigma, B)$, where: 

\noindent
$(i)$ $\G$ is a connected Lie group and $\K$ a closed subgroup. 

\noindent
$(ii)$ $\sigma$ is an involutive automorphism of $\G$ such that $\left(\G_{\sigma}\right)_{0} \subset \K \subset \G_{\sigma}$, where $\G_{\sigma}:=\mathrm{Fix}(\sigma)$. 

\noindent
$(iii)$ $B$ is an $\Ad(\K)$-invariant scalar product on $\fm_{s}$, where
\begin{equation*}
\fm_{s}:= \left\{ u \in \g : d_{e}\sigma(u)= - u \right\}.
\end{equation*}
\end{defi}

\noindent
Denoting by $\fk$ the Lie algebra of $\K$, by \cite[Lemma 30]{O'Neill 1983}, $\fk = \left\{ u \in \g : d_{e}\sigma(u)= u \right\}$. Now, $d_{e} \sigma$ is involutive, so that $\g = \fk \oplus \fm_{s}$. Furthermore, $\fm_{s}$ is an $\Ad(\K)$-invariant subspace, which implies that $\G / \K$ is \textit{reductive}. Finally, the following relations hold:
\begin{equation}\label{threeConditions}
\lb \fk, \fk \rb \subset \fk, \quad
\lb \fk, \fm_{s} \rb \subset \fm_{s}
\quad \textrm{and} \quad
\lb \fm_{s}, \fm_{s} \rb \subset \fk. 
\end{equation}

\noindent
The condition $\lb \fm_{s}, \fm_{s} \rb \subset \fk$ implies that $(\ref{condNatRed})$ holds. So, \textit{a symmetric space is naturally reductive}.

\subsubsection{Normal symmetric spaces}\label{symIsNormal}

We considered two classes of naturally reductive spaces : normal homogeneous spaces and symmetric spaces. In this subsection, we investigate their intersection. 

\noindent\\
Let $\G, \K, \sigma$ be as in $(i)$ and $(ii)$ of Definition $\ref{symData}$. Let $\overline{B}$ be an $\Ad(\G)$-invariant scalar product on $\g$ such that its restriction $\overline{B}_{|\fm_{s}}$ to $\fm_{s}$ is an $\Ad(\K)$-invariant scalar product on $\fm_{s}$. If $\G$ is compact, then $(\G / \K, \sigma, \overline{B}_{|\fm_{s}})$ is a symmetric data such that $(\G / \K, \overline{B})$ induces a structure of normal homogeneous space. We say that $\G / \K$ is a \textit{normal symmetric space}. By the Remark p.319 in \cite{O'Neill 1983}, $\overline{B}_{|\fm_{s}}$ is an $\Ad(\K)$-invariant scalar product on $\fm_{s}$ provided that $\overline{B}$ is invariant under $d_{e}\sigma$. In this case, we say that the triple $(\G / \K, \sigma, \overline{B}_{|\fm_{s}})$ is a \textit{usual normal symmetric space}. Thus, $\fm_{s} \perp \fk$ wrt $\overline{B}$ and
\begin{equation}\label{MsPerpK}
\fm_{s} = \fm_{n} := \fk^{\perp}. 
\end{equation}

\begin{rem}\label{KillingBothInva}
If $\G$ is semisimple, then its Killing form is a scalar product on $\g$ which is invariant under both $\Ad(\G)$ and $d_{e}\sigma$. 
\end{rem}

\subsubsection{Geometry of symmetric spaces}

The formulas below are derived form the natural reductivity of symmetric spaces. For the Levi-Civita connection, the condition $\lb \fm_{s}, \fm_{s} \rb \subset \fk$ allows to simplify $(\ref{LCCNatRed})$, so that for $u,v \in \fm_{s}$,
\begin{equation}\label{LCCSym}
\left( \nabla_{\widetilde{u}} \widetilde{v} \right)_{o} = 0. 
\end{equation}

\noindent
For the sectional curvature, we simplify $(\ref{CurvNatRed})$ by using $(\ref{threeConditions})$, to obtain that for $u,v \in \fm_{s}$, 
\begin{equation}\label{curvaSym}
K_{o}\left( \widetilde{u} , \widetilde{v} \right) = 
\left\langle \lb \lb u,v \rb, u \rb, v \right\rangle
\end{equation}

\subsection{Summary}

\subsubsection{Hierarchy of homogeneous spaces}

For $n>2$, $SO(n)$ is semisimple. Indeed, its Killing form is a negative multiple of the Frobenius inner product. So, Grassmannians and flag manifolds are \textit{standard normal} homogeneous spaces. By Remark $\ref{KillingBothInva}$, Grassmannians are usual \textit{normal symmetric} spaces. In contrast, flag manifolds are \textit{not locally symmetric}. More generally, we have seen that normal homogeneous spaces and symmetric spaces are \textit{naturally reducive}. The  diagram of Figure $\ref{figureHierarchy}$ describes the hierarchy of homogeneous spaces aforementioned and locate precisely Grassmannians and flag manifolds within this hierarchy. 

\begin{figure}
\centering
\includegraphics[width=7cm]{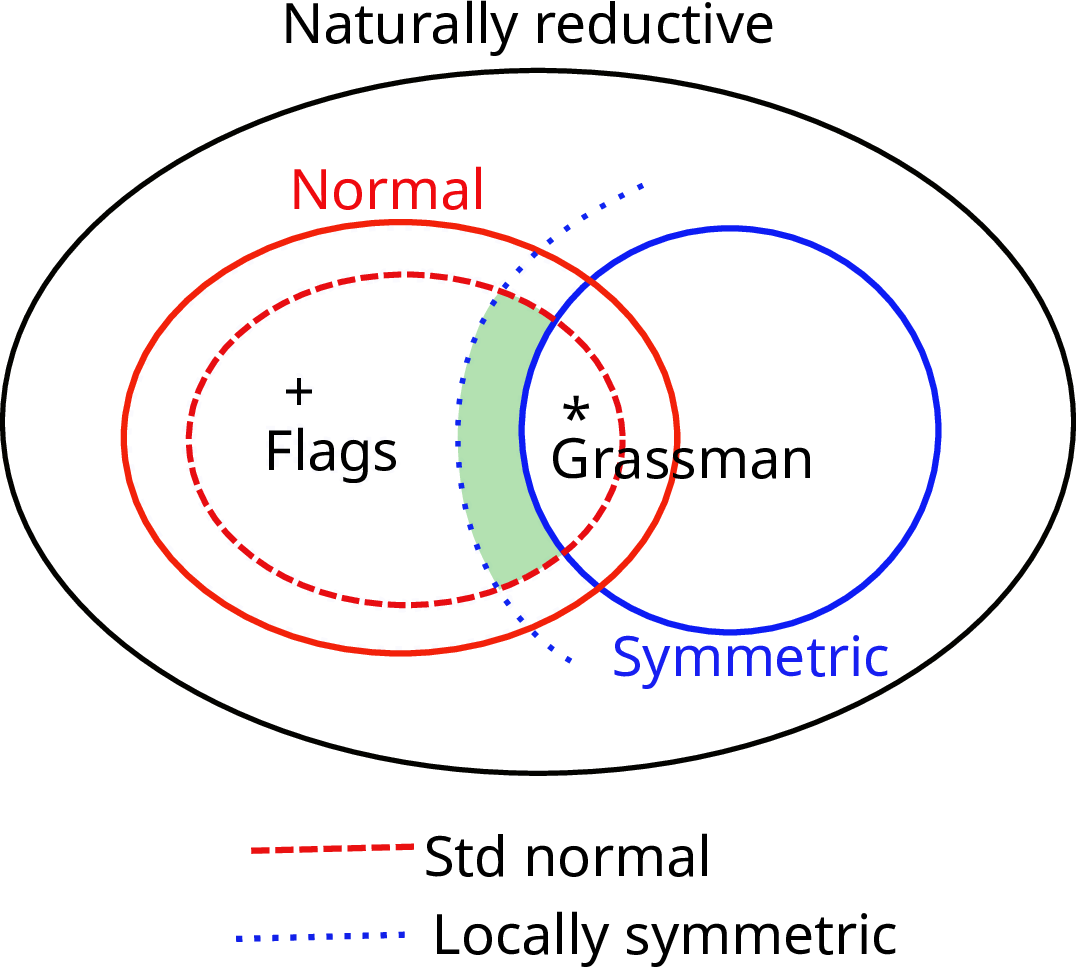}
\caption{Hierarchy of homogeneous spaces\label{figureHierarchy}}
\end{figure}

\noindent\\
In particular, the region in green in this diagram represents the class of standard homogeneous spaces that are locally but not globally symmetric. Thus, the spaces within this class are, in this sense, \textit{intermediate} between Grassmannians and flag manifolds. An example of such a space is provided in \cite{Fels 1997}. Namely, let $S^{3 }= SO(4) / SO(3)$ be the sphere of dimension $3$. Now, $SU(2)$ is a closed subgroup of $SO(4)$ which acts transitively and freely on $S^{3 }$. Then, for any discrete subgroup $\Gamma \subset SU(2)$ with more than $2$ elements, the manifold $\M := SU(2) / \Gamma$ belongs to this class. We refer to \cite{Fels 1997} for a detailed justification.

\subsubsection{Table of formulas}

\noindent\begin{tabular}{|p{2cm}|p{4cm}|p{4.3cm}|p{3.5cm}|}
\hline 
 & \begin{center} Naturally reductive \end{center} & \begin{center} Normal \end{center} & \begin{center} Symmetric \end{center} \\
\hline
Lie subspace $\fm$ such that \newline $\g = \fk \oplus \fm$  &  $\fm$ is an $\Ad(\K)$-invariant subspace of $\g$  &  
\begin{center}
$\fm_{n}:= \fk^{\perp}$
\end{center} &  
\begin{center}
$\fm_{s}:= \left\{ u \in \g : d_{e}\sigma(u)= - u \right\}$
\end{center}  \\
\hline
Relations \newline in $\g$ & For $u, v, w \in \fm$, 
\begin{equation*}
\left\langle \lb u,v \rb_{\fm}, w \right\rangle = \left\langle u, \lb v,w \rb_{\fm} \right\rangle 
\end{equation*} &
For $u, v, w \in \g$
\begin{equation*}
\left\langle \lb u,v \rb, w \right\rangle = \left\langle u, \lb v,w \rb \right\rangle 
\end{equation*}  &  
\begin{center}
$\lb \fm_{s}, \fm_{s} \rb \subset \fk$
\end{center} \\
\hline
Levi-Civita connection at origin $o$ & 
For $u, v \in \fm$, \begin{equation*}
\left( \nabla_{\widetilde{u}} \widetilde{v} \right)_{o} = -\frac{1}{2} d_{e}\psi \left(\lb u, v \rb_{\fm} \right)
\end{equation*}  & 
For $u, v \in \fm_{n}$, \begin{equation*}
\left( \nabla_{\widetilde{u}} \widetilde{v} \right)_{o} = -\frac{1}{2} d_{e}\psi \left(\lb u, v \rb \right)
\end{equation*} & 
For $u, v \in \fm_{s}$, 
\begin{equation*}
\left( \nabla_{\widetilde{u}} \widetilde{v} \right)_{o} = 0
\end{equation*}\\
\hline
Sectional curvature \newline at origin $o$ & 
For $u,v \in \fm$, 
\begin{multline*}
K_{o}\left( \widetilde{u} , \widetilde{v} \right) = \frac{1}{4} \left\| \lb u,v \rb_{\fm} \right\|^{2}  \\
+ \left\langle \lb \lb u,v \rb_{\fk}, u \rb_{\fm}, v \right\rangle
\end{multline*}  & 
For $u, v \in \fm_{n}$, 
\begin{multline*} 
K_{o}\left( \widetilde{u} , \widetilde{v} \right) = \frac{1}{4}\left\| \lb u,v \rb_{\fm_{n}} \right\|^{2}\\ 
+ \left\| \lb u,v \rb_{\fk} \right\|^{2} 
\end{multline*}  & 
For $u,v \in \fm_{s}$, 
\begin{multline*}
K_{o}\left( \widetilde{u} , \widetilde{v} \right) = \\
\left\langle \lb \lb u,v \rb, u \rb, v \right\rangle
\end{multline*}  \\
\hline
\end{tabular}

\subsubsection{Comparison of formulas}\label{compareFormulas1}

\begin{defi}
 We say that a normal homogeneous space $\G/\K$ satisfies the Bracket Condition when   
\begin{equation}\label{BracketConditionNHS}
\lb \fm_{n}, \fm_{n} \rb \subset \fk.
\end{equation}
\end{defi}

\noindent
We introduce this condition because, when it holds, it allows to simplify the formulas of the connection and the curvature of normal homogeneous spaces. Actually, these simplified formulas coincide with those for usual normal symmetric spaces, as synthetized in the following table.

\bigskip

\noindent\begin{tabular}{|p{2cm}|p{5cm}|p{7cm}|}
\hline 
 &  \begin{center} Normal + $[\fm_{n}, \fm_{n}]\subset \fk$ \end{center} & \begin{center} Usual normal symmetric  \end{center} \\
\hline
Levi-Civita connection at $o$ & 
For $u, v \in \fm_{n}$, \begin{equation*}
\left( \nabla_{\widetilde{u}} \widetilde{v} \right)_{o} = 0
\end{equation*} & 
For $u, v \in \fm_{s}=\fk^{\perp}$, \begin{equation*}
\left( \nabla_{\widetilde{u}} \widetilde{v} \right)_{o} = 0 
\end{equation*} \\
\hline
Sectional curvature \newline at $o$ & 
For $u, v \in \fm_{n}$, 
\begin{equation*} 
K_{o}\left( \widetilde{u} , \widetilde{v} \right) = \left\| \lb u,v \rb_{\fk} \right\|^{2} 
= \left\| \lb u,v \rb \right\|^{2}
\end{equation*}  & 
For $u,v \in \fm_{s}=\fk^{\perp}$, 
\begin{equation*}
K_{o}\left( \widetilde{u} , \widetilde{v} \right) = \left\langle \lb \lb u,v \rb, u \rb, v \right\rangle 
= \left\| \lb u,v \rb \right\|^{2}
\end{equation*}  
Follows from the relation in $\g$ for the normal case.\\ 
\hline
\end{tabular}

\bigskip

\begin{rem}
We notice that usual normal symmetric spaces satisfy the Bracket Condition $(\ref{BracketConditionNHS})$, which follows by combining the relations $\fm_{n}=\fm_{s}$ and $\lb \fm_{s}, \fm_{s} \rb \subset \fk$.  Conversely, this condition is not sufficient to characterize normal symmetric spaces among normal homogeneous ones. However, if one assumes additionally that $\fk$ is a compactly embedded Lie subalgebra of $\g$, then $\G / \K$, endowed with any $\G$-invariant metric, is a symmetric space. We refer to \cite{Helgason 1978} for the concept of compactly embedded Lie subalgebra. 
\end{rem}

\begin{rem}
We check in Section $\ref{sect4}$ that Grassmannians satisfy indeed the Bracket Condition $(\ref{BracketConditionNHS})$, contrarily to flag manifolds. 
\end{rem}

\section{Effective formulas for realizations of normal homogeneous spaces}\label{sect3}

We recall and precise the notations of the Introduction. $\M$ is a smooth manifold, on which a compact connected Lie group $\G$, with bi-invariant metric $g^{\G}$, acts smoothly. $\B$ is the orbit of $b_{0} \in \M$, with isotropy group $\K$. The main orbital map $\pi_{0} : \G \lra \B$ is defined by $\pi_{0}(Q)=Q \cdot b_{0}$. 

Let $\psi : \G \lra \G/\K$ be the canonical quotient map and $\overline{g}^{\G}$ the quotient metric on $\G / \K$. So, $\G / \K$ is a \textit{normal homogeneous space}. $\B$ is endowed with the metric $g^{\B}$ for which the map $\widehat{\pi_{0}} : \left( \G/\K, \overline{g}^{\G} \right) \lra \left(\B, g^{\B} \right)$, where $\pi_{0}=\widehat{\pi_{0}} \circ \psi$, is an isometry. Let $\fk$ be the Lie algebra of $\K$. By $(\ref{kerTangentPi})$, $\fk=\ker(d_{e}\psi)=\ker(d_{e}\pi_{0})$. We denote by $\fm$ the Lie subspace of $\G / \K$. Then, 
\begin{equation*}
\fm := \fk^{\perp} = \left(\ker(d_{e}\pi_{0}) \right)^{\perp}. 
\end{equation*}

\noindent
By construction, the main orbital map is a \textit{Riemannian submersion}. We establish explicit formulas for the connection and the curvature of $\B$. For the connection, we prove in subsection $\ref{PrelimDecVF}$, the existence of a neighborhood of $b_{0}$ on which any vector field on $\B$ is decomposed into a linear combination of fundamental ones (see $(\ref{defFVFonB})$ below), with \textit{smooth} coefficients. Such a decomposition allows to derive, in subsection $\ref{subsecLCCofB}$, the Levi-Civita connection of $\B$ for arbitrary vector fields from that for $\G/\K$ recalled in the preceding Section, which hold only for fundamental vector fields on $\G/\K$. We recall the Bracket Condition for $\G / \K$, under which these formulas are simplified:
\begin{equation}\label{BracketCondB}
\lb \fm, \fm \rb \subset \fk. 
\end{equation}

\subsection{Structure of the main orbital map}

In this subsection, we essentially develop some consequences of the Riemannian submersion structure of $\pi_{0}$. However, we state hereafter another result on its rich structure, which is exploited in subsection $\ref{subsecPertFlags}$ below.  

\begin{lem}\label{structureMainOM}
$\left(\G, \pi_{0}, \B, \K \right)$ is a principal bundle. 
\end{lem}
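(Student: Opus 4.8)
The plan is to verify that $\pi_0 : \G \lra \B$ is a (locally trivial) principal $\K$-bundle by using the free and proper right action of $\K$ on $\G$, then transporting everything through the diffeomorphism $\widehat{\pi_0} : \G/\K \lra \B$. The starting observation is that $\K$ acts on $\G$ on the right by $R_k(Q) = Qk$, and this action is free (group multiplication is free) and proper ($\K$ is compact, since it is a closed subgroup of the compact group $\G$). Hence by the standard quotient-manifold theorem the orbit map $\psi : \G \lra \G/\K$ is a smooth principal $\K$-bundle: it admits local sections, and the map $\psi^{-1}(U) \to U \times \K$, $Q \mapsto (\psi(Q), \text{coordinate of } Q)$ built from a local section is a $\K$-equivariant diffeomorphism. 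This is the content usually attributed to the fact that $\G \lra \G/\K$ is a principal bundle whenever $\K$ is a closed (equivalently, embedded Lie) subgroup of $\G$.

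Next I would identify the fibers of $\pi_0$ with the $\K$-orbits. By \cite[Proposition A.2]{Bendokat Zimmermann and Absil 2020} quoted in the Introduction, $\pi_0 = \widehat{\pi_0} \circ \psi$ with $\widehat{\pi_0}$ a diffeomorphism, so $\pi_0(Q) = \pi_0(Q')$ if and only if $\psi(Q) = \psi(Q')$, i.e. $Q' \in Q\K$. Thus the fibers of $\pi_0$ are exactly the right $\K$-cosets, and $\K$ acts on each fiber simply transitively. Then I would define local trivializations of $\pi_0$: given $b \in \B$, pick $Q$ with $\pi_0(Q) = b$, a neighborhood $\cV$ of $\psi(Q)$ in $\G/\K$ over which $\psi$ trivializes via a local section $s : \cV \lra \G$, and set $U := \widehat{\pi_0}(\cV)$, a neighborhood of $b$ in $\B$ since $\widehat{\pi_0}$ is a diffeomorphism. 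The composite $\widehat{\pi_0} \circ (\text{trivialization of }\psi) \circ (\widehat{\pi_0})^{-1}$ gives a $\K$-equivariant diffeomorphism $\pi_0^{-1}(U) \simeq U \times \K$ over $U$. Explicitly, $\sigma := s \circ (\widehat{\pi_0})^{-1} : U \lra \G$ is a local section of $\pi_0$, and $\pi_0^{-1}(U) \to U \times \K$, $Q \mapsto (\pi_0(Q), \sigma(\pi_0(Q))^{-1} Q)$, is the desired equivariant diffeomorphism with inverse $(b, k) \mapsto \sigma(b) k$. Finally one checks the structure-group axioms: the transition functions between two such trivializations take values in $\K$ acting on itself by left translation, which is immediate from the coset description.

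The only genuine input beyond bookkeeping is the fact that a closed subgroup $\K$ of a Lie group $\G$ makes $\psi : \G \lra \G/\K$ a principal $\K$-bundle; everything else is transport of structure along the diffeomorphism $\widehat{\pi_0}$, which the Introduction has already supplied. So I do not anticipate a real obstacle — the main care-point is simply to phrase the local triviality on the $\B$ side rather than the $\G/\K$ side and to confirm that $\K$-equivariance is preserved under composition with $\widehat{\pi_0}$, which it is because $\widehat{\pi_0}$ intertwines the (trivial) $\K$-actions on the base and hence commutes with the fiberwise $\K$-action. One could also cite directly that $\pi_0$ is a fiber bundle associated to the principal bundle $\psi$ via the trivial action on a point, but the cleanest self-contained argument is the explicit local-section construction above.
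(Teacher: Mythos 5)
Your proof is correct, but you take a different route from the paper. The paper's proof is a one-line application of \cite[Lemma 18.3]{Michor 2008}: it observes that $\pi_0$ is a surjective submersion and that $\K$ acts freely on $\G$ with orbits exactly the fibers of $\pi_0$, and then cites Michor's criterion which upgrades this to a principal bundle. You instead start from the classical Lie-theoretic fact that $\psi : \G \to \G/\K$ is already a principal $\K$-bundle whenever $\K$ is a closed subgroup, and transport the trivializations along the diffeomorphism $\widehat{\pi_0}$, writing out the local sections, the equivariant chart $Q \mapsto (\pi_0(Q), \sigma(\pi_0(Q))^{-1}Q)$, and the $\K$-valued transition functions explicitly. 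In effect you re-derive, in this special case, exactly what Michor's lemma would hand you for free; the paper's approach is shorter because it delegates the local-triviality construction to the cited lemma, while yours is more self-contained and makes the chart data visible. It is worth noting that the paper's own Remark after Corollary \ref{corLocSection} essentially records the same explicit atlas you construct (built from holonomy-based sections $\mathfrak{S}_{\beta_j}^{x_{\beta_j}}$), so your viewpoint is compatible with and anticipated by the text — you have just inlined it into the proof of the lemma rather than deferring it to a remark.
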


\begin{proof}
$\pi_{0}$ is a surjective submersion and the Lie group $\K$ acts freely on $\G$, by right-multiplication, such that the orbits of this action are exactly the fibers of $\pi_{0}$. So, by Lemma $18.3$ in \cite{Michor 2008}, $\left(\G, \pi_{0}, \B, \K \right)$ is a principal bundle. 
\end{proof}

\subsubsection{Metric and geodesics of $\B$}

First, the \textit{Riemannian submersion} structure of $\pi_{0}$ provides readily the description of the metric and the geodesics of $\B$. For $Q \in \G$, let $\cL_{Q}$ (resp. $\cR_{Q}$) be the left (resp. right) multiplication by $Q$ in $\G$, which are isometries of $\left(\G , g^{\G} \right)$. For any $R \in \G$ and $\Delta \in T_{R}\G$, set 
\begin{equation*}
\cL_{Q}[\Delta] := \left( d_{R}\cL_{Q} \right)(\Delta) \in T_{QR}\G 
\quad \textrm{and} \quad 
\cR_{Q}[\Delta] := \left( d_{R}\cR_{Q} \right)(\Delta) \in T_{RQ}\G. 
\end{equation*}

\noindent
For any $Q \in \G$, $T_{Q}\G$ is endowed with the inner product $g^{\G}_{Q}$. Then, set
\begin{equation}\label{HorVerRS}
V_{Q}:=\ker(d_{Q}\pi_{0}) \quad \textrm{and} \quad H_{Q}:=V_{Q}^{\perp}.
\end{equation}

\noindent
Then, $\left( d_{Q}\pi_{0} \right)_{|H_{Q}}$ is a linear isometry. For $b \in \B$, $\Delta \in T_{b}\B$ and $Q \in \pi_{0}^{-1}(b)$, the \textit{horizontal lift} at $Q$ of $\Delta$ wrt $\pi_{0}$ is 
\begin{equation}\label{horLiftClassic}
\Delta^{\sharp}_{Q} := \left( d_{Q}\pi_{0} \right)_{|H_{Q}}^{-1}\left( \Delta \right). 
\end{equation}

\noindent
So, the metric $g^{\B}$ is defined as follows: 
\begin{equation}\label{metricB}
g^{\B}_{b}\left( \Delta, \Delta' \right) = g^{\G}_{Q}\left( \Delta^{\sharp}_{Q}, (\Delta')^{\sharp}_{Q} \right), \quad \Delta, \Delta' \in T_{b}\B. 
\end{equation}

\noindent
For any manifold $\M$, we denote by $\gamma^{\M}_{x,v}$ the geodesic in $\M$ through $x$ in the direction $v$. Since $\pi_{0}$ is a Riemannian submersion, the geodesics in $\B$ are the images by $\pi_{0}$ of \textit{horizontal geodesics} in $\G$, i.e. 
\begin{equation*}
\gamma^{\B}_{b,\Delta} = \pi_{0}\left( \gamma^{\G}_{Q,\Delta^{\sharp}_{Q}} \right), \quad b \in \B, \Delta \in T_{b}\B \textrm{ and } Q \in \pi_{0}^{-1}(b). 
\end{equation*}
 
\noindent
Now, $g^{\G}$ is bi-invariant. So, the geodesics in $\G$ are left translates of its one parameter subgroups:  
\begin{equation}\label{geodesicB}
\gamma^{\B}_{b,\Delta} : t \lmp \pi_{0} \left( Q \exp_{\g} \left( t \cL_{Q^{-1}} \left[\Delta^{\sharp}_{Q}\right] \right) \right), 
\end{equation}

\noindent
where $\exp_{\g} : \g \longrightarrow \G$ is the Lie group exponential map of $\G$.

\subsubsection{Sections of Riemannian submersions}

Let $\Pi : \left( \E, g^{\E} \right) \lra \left( \B, g^{\B} \right)$ be a Riemannian submersion, with $\E$ complete and $\B$ connected. Set  
\begin{equation*}
\cV_{\beta}:=\B \setminus \Ct(\beta), \quad \beta \in \B. 
\end{equation*}

\noindent
where $\Ct(\beta)$ is the cut locus of $\beta$. Since $\B$ is also complete, for all $b \in \cV_{\beta}$, there exists a unique minimal geodesic $\gamma(\beta, b)$ between $\beta$ and $b$. For $x \in \Pi^{-1}(\beta)$, let $\gamma(\beta, b)^{\sharp}_{x}$ be the horizontal lift through $x$ of $\gamma(\beta,b)$. Then, $\gamma(\beta, b)^{\sharp}_{x}$ is a horizontal geodesic in $\E$ whose endpoint, denoted by $\cH^{\Pi}_{(\beta, b)}(x)$, lies in the fiber of $b$. Namely, 
\begin{equation}\label{holonomyFormula}
\cH^{\Pi}_{(\beta, b)}(x) = \Exp^{\E}_{x} \left( \left( \Log^{\B}_{\beta}(b) \right)^{\sharp}_{x} \right), \quad x \in \Pi^{-1}(\beta).
\end{equation}

\noindent
Then, the map 
\begin{equation}\label{defHolMap}
\cH^{\Pi}_{(\beta, b)} : \Pi^{-1}(\beta) \lra \Pi^{-1}(b), \qquad x ~\lmp~ \cH^{\Pi}_{(\beta, b)}(x)
\end{equation}

\noindent
is a diffeomorphism, called a \textit{holonomy map} (see \cite{Hermann 1960} or \cite{Guijarro and Walschap 2007}). Thus, we proved the following result.

\begin{lem}\label{lemLocSection}
Let $\Pi : \left( \E, g^{\E} \right) \lra \left( \B, g^{\B} \right)$ be a Riemannian submersion, with $\E$ complete and $\B$ connected. For fixed $\beta \in \B$, set $\cV_{\beta}:=\B \setminus \Ct(\beta)$ and for $x_{\beta} \in \Pi^{-1}\left( \beta \right)$, consider the map 
\begin{equation*}
\mathfrak{S}_{\beta}^{x_{\beta}} : \cV_{\beta} \lra \E, 
\quad
\mathfrak{S}_{\beta}^{x_{\beta}}\left( b \right) = \cH^{\Pi}_{(\beta, b)}(x_{\beta}).
\end{equation*}

\noindent
Then, $\mathfrak{S}_{\beta}^{x_{\beta}}$ is a smooth local section of $\Pi$. 
\end{lem}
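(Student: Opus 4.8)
The plan is to verify that $\mathfrak{S}_{\beta}^{x_{\beta}}$ is (i) a section, i.e. $\Pi \circ \mathfrak{S}_{\beta}^{x_{\beta}} = \mathrm{id}_{\cV_{\beta}}$, and (ii) smooth, using the closed-form expression $(\ref{holonomyFormula})$ for the holonomy map. The section property is essentially immediate from the construction: by definition $\cH^{\Pi}_{(\beta, b)}(x_{\beta})$ is the endpoint of the horizontal lift starting at $x_{\beta}$ of the minimal geodesic $\gamma(\beta, b)$ from $\beta$ to $b$; since a horizontal lift of a curve projects back down to that curve, its endpoint lies in the fiber $\Pi^{-1}(b)$, hence $\Pi\bigl(\mathfrak{S}_{\beta}^{x_{\beta}}(b)\bigr) = b$. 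I would state this in one or two lines, citing the lift/projection compatibility that was already used to introduce the holonomy map.

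For smoothness, the idea is to read $(\ref{holonomyFormula})$ as a composition of smooth maps. First I would recall that on the open set $\cV_{\beta} = \B \setminus \Ct(\beta)$, the map $b \mapsto \Log^{\B}_{\beta}(b) = \bigl(\Exp^{\B}_{\beta}\bigr)^{-1}(b) \in T_{\beta}\B$ is smooth: this is the standard fact that the exponential map of a complete Riemannian manifold is a diffeomorphism from a star-shaped open subset of $T_{\beta}\B$ onto the complement of the cut locus. Next, for the fixed point $x_{\beta} \in \Pi^{-1}(\beta)$, the horizontal lift map $T_{\beta}\B \to H_{x_{\beta}} \subset T_{x_{\beta}}\E$, $w \mapsto w^{\sharp}_{x_{\beta}}$, is linear, hence smooth; it is the inverse of the linear isometry $\bigl(d_{x_{\beta}}\Pi\bigr)_{|H_{x_{\beta}}}$. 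Finally $\Exp^{\E}_{x_{\beta}} : T_{x_{\beta}}\E \to \E$ is smooth because $\E$ is complete. Composing, $b \mapsto \Exp^{\E}_{x_{\beta}}\bigl(\bigl(\Log^{\B}_{\beta}(b)\bigr)^{\sharp}_{x_{\beta}}\bigr)$ is smooth on $\cV_{\beta}$, which is exactly $\mathfrak{S}_{\beta}^{x_{\beta}}$.

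The one point requiring a little care — and the main (mild) obstacle — is justifying the closed form $(\ref{holonomyFormula})$ itself, i.e. that the horizontal lift of the minimal geodesic $\gamma(\beta,b)$ starting at $x_{\beta}$ is the geodesic in $\E$ issuing from $x_{\beta}$ with initial velocity $\bigl(\Log^{\B}_{\beta}(b)\bigr)^{\sharp}_{x_{\beta}}$, and that it reaches the fiber of $b$ at parameter $1$. This is the basic property of Riemannian submersions that horizontal lifts of geodesics are geodesics, combined with the fact that $\Log^{\B}_{\beta}(b)$ is precisely the initial velocity of $\gamma(\beta,b)$ traversed on $[0,1]$; completeness of $\E$ guarantees the lifted geodesic is defined on all of $[0,1]$. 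Since the excerpt has already introduced the holonomy map via this very construction and cited \cite{Hermann 1960} and \cite{Guijarro and Walschap 2007}, I would simply invoke those references for the identity $(\ref{holonomyFormula})$ and for the fact that $\cH^{\Pi}_{(\beta,b)}$ is a diffeomorphism onto $\Pi^{-1}(b)$, then assemble the smoothness argument above. The whole proof is therefore short: one line for the section property, and a three-map composition argument for smoothness.
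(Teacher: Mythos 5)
Your proposal is correct and follows the same route as the paper: the paper derives the lemma directly from the closed form $(\ref{holonomyFormula})$ and simply remarks ``thus, we proved the following result,'' leaving the smoothness step implicit. Your three-map composition argument ($\Log^{\B}_{\beta}$ smooth on the cut-locus complement, horizontal lift at $x_{\beta}$ linear, $\Exp^{\E}_{x_{\beta}}$ smooth and globally defined by completeness of $\E$) is exactly the content the paper is tacitly relying on, made explicit.
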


\smallskip

\begin{corr}\label{corLocSection}
The map $\sigma_{0}$ defined hereafter is a smooth local section of $\pi_{0}$ such that $\sigma_{0}(b_{0})=e$. 
\begin{equation}\label{defSigma0}
\sigma_{0} : \cV_{b_{0}} \lra \G, \quad \sigma_{0}(b)=\cH^{\pi_{0}}_{(b_{0}, b)}(e).
\end{equation}

\end{corr}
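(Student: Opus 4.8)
The plan is to derive Corollary \ref{corLocSection} as a direct instantiation of Lemma \ref{lemLocSection}, applied to the Riemannian submersion $\pi_{0} : \G \lra \B$. First I would verify that the hypotheses of Lemma \ref{lemLocSection} are met with $\E = \G$, $\Pi = \pi_{0}$, and $\B$ the orbit: indeed $\G$ is compact, hence complete as a Riemannian manifold with the bi-invariant metric $g^{\G}$, and $\B$ is connected since it is the continuous image of the connected group $\G$ under $\pi_{0}$. That $\pi_{0}$ is a Riemannian submersion was already established by construction in the preamble of Section \ref{sect3} (the metric $g^{\B}$ on $\B$ is defined precisely so that $\left( d_{Q}\pi_{0}\right)_{|H_{Q}}$ is a linear isometry, cf. $(\ref{metricB})$).

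Next I would specialize the construction of Lemma \ref{lemLocSection} to the base point $\beta := b_{0}$ and the fiber point $x_{\beta} := e \in \pi_{0}^{-1}(b_{0})$, which is legitimate since $\pi_{0}(e) = e \cdot b_{0} = b_{0}$. With these choices, the set $\cV_{\beta}$ becomes $\cV_{b_{0}} = \B \setminus \Ct(b_{0})$, and the map $\mathfrak{S}_{\beta}^{x_{\beta}}$ becomes exactly $\sigma_{0}(b) = \cH^{\pi_{0}}_{(b_{0}, b)}(e)$ as in $(\ref{defSigma0})$. Lemma \ref{lemLocSection} then immediately yields that $\sigma_{0}$ is a smooth local section of $\pi_{0}$, i.e. $\pi_{0} \circ \sigma_{0} = \Id_{\cV_{b_{0}}}$ and $\sigma_{0}$ is smooth. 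The last claim, $\sigma_{0}(b_{0}) = e$, follows because the minimal geodesic $\gamma(b_{0}, b_{0})$ is the constant path at $b_{0}$, so its horizontal lift through $e$ is the constant path at $e$; equivalently, in formula $(\ref{holonomyFormula})$, $\Log^{\B}_{b_{0}}(b_{0}) = 0$, hence $\cH^{\pi_{0}}_{(b_{0}, b_{0})}(e) = \Exp^{\G}_{e}(0) = e$.

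There is essentially no obstacle here, since the corollary is a one-line application of the already-proven lemma; the only point deserving a word is the completeness of $\G$, which I would justify by invoking compactness (a compact Riemannian manifold is geodesically complete by Hopf--Rinow), and the fact that $b_{0} \notin \Ct(b_{0})$ so that $\cV_{b_{0}}$ is a nonempty open neighborhood of $b_{0}$ on which $\sigma_{0}$ is genuinely defined and serves as a local section around the origin.

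\begin{proof}
Since $\G$ is compact, it is geodesically complete by the Hopf--Rinow theorem, and $\B = \pi_{0}(\G)$ is connected as the image of the connected group $\G$. Moreover, $\pi_{0} : \left( \G, g^{\G} \right) \lra \left( \B, g^{\B} \right)$ is a Riemannian submersion by construction of the metric $g^{\B}$, cf. $(\ref{metricB})$. Hence the hypotheses of Lemma $\ref{lemLocSection}$ are satisfied with $\E = \G$ and $\Pi = \pi_{0}$. Applying that lemma with $\beta := b_{0}$ and $x_{\beta} := e \in \pi_{0}^{-1}(b_{0})$, we obtain that the map $\sigma_{0} : \cV_{b_{0}} \lra \G$ given by $\sigma_{0}(b) = \cH^{\pi_{0}}_{(b_{0}, b)}(e)$ is a smooth local section of $\pi_{0}$. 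Finally, the minimal geodesic from $b_{0}$ to itself is constant, so $\Log^{\B}_{b_{0}}(b_{0}) = 0$, and by $(\ref{holonomyFormula})$ we get $\sigma_{0}(b_{0}) = \cH^{\pi_{0}}_{(b_{0}, b_{0})}(e) = \Exp^{\G}_{e}(0) = e$.
\end{proof}
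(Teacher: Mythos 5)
Your proof is correct and takes exactly the route the paper intends: Corollary \ref{corLocSection} is stated immediately after Lemma \ref{lemLocSection} precisely because it is the instantiation $\E=\G$, $\Pi=\pi_{0}$, $\beta=b_{0}$, $x_{\beta}=e$, and you verify the hypotheses (completeness of $\G$ by compactness, connectedness of $\B$ as the continuous image of $\G$, and the Riemannian-submersion property of $\pi_{0}$) in the natural way. Your added observation that $\sigma_{0}(b_{0})=e$ via $\Log^{\B}_{b_{0}}(b_{0})=0$ in $(\ref{holonomyFormula})$ is the correct and expected justification of the final claim.
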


\smallskip

\begin{rem}
Since $\B$ is compact, we can consider a finite open cover $\left( \cV_{\beta_{j}} \right)_{j \in J}$ of $\B$, with $\left| J \right|<\infty$. For any $j \in J$, the map $\phi_{j} : \cV_{\beta_{j}} \times \K \lra \pi_{0}^{-1}\left( \cV_{\beta_{j}} \right)$ defined by $\phi_{j}\left( b, k \right) = \left( \mathfrak{S}_{\beta_{j}}^{x_{\beta_{j}}} \left( b \right) \right) k$ is a diffeomorphism. Then, the family $\left( \cV_{\beta_{j}}, \phi_{j}^{-1} \right)_{j \in J}$ is a principal bundle atlas for $\left(\G, \pi_{0}, \B, \K \right):$ See the proof of Lemma $18.3$ in \cite{Michor 2008}.
\end{rem}

\subsection{Preliminary results}\label{PrelimDecVF}

Set $N:=\dim(\B)= \dim(\fm)$ and $K:=\dim(\fk)$. Thus, $\dim(\G)=N+K$. In the sequel, we fix an orthonormal basis $(\epsilon_{k})_{1 \leq k \leq N}$ of $\left( \fm , \langle \cdot , \cdot \rangle \right)$.

\subsubsection{Translations in $\B$}

We call translations in $\B$ the maps $\left(\cL^{\B}_{Q}\right)_{Q \in \G}$ defined by 
\begin{equation*}
\cL^{\B}_{Q} :\B \lra \B, \quad \left(\cL^{\B}_{Q}\right)(b) = Q \cdot b. 
\end{equation*}

\begin{lem}
For any $Q \in \G$, $\cL^{\B}_{Q}$ is an isometry of $\left( \B, g^{\B} \right)$. 
\end{lem}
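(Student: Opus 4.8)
The plan is to show that each $\cL^{\B}_{Q}$ is an isometry by lifting the statement to $\G$ via the Riemannian submersion $\pi_{0}$ and using that left and right translations in $\G$ are isometries of $g^{\G}$. The key observation is that the action of $\G$ on $\B$ intertwines with left multiplication on $\G$: for any $Q, R \in \G$ we have $\pi_{0}(QR) = (QR)\cdot b_{0} = Q\cdot(R\cdot b_{0}) = \cL^{\B}_{Q}(\pi_{0}(R))$, so that
\begin{equation}\label{intertwineTranslations}
\pi_{0} \circ \cL_{Q} = \cL^{\B}_{Q} \circ \pi_{0}.
\end{equation}
In particular $\cL^{\B}_{Q}$ is smooth (indeed a diffeomorphism, with inverse $\cL^{\B}_{Q^{-1}}$), and it maps fibers of $\pi_{0}$ to fibers of $\pi_{0}$.

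First I would fix $b \in \B$, a tangent vector $\Delta \in T_{b}\B$, and a point $R \in \pi_{0}^{-1}(b)$; then $QR \in \pi_{0}^{-1}(\cL^{\B}_{Q}(b))$. Differentiating $(\ref{intertwineTranslations})$ at $R$ gives $d_{b}\cL^{\B}_{Q} \circ d_{R}\pi_{0} = d_{QR}\pi_{0} \circ d_{R}\cL_{Q}$. The next step is to check that $d_{R}\cL_{Q}$ sends the horizontal space $H_{R}$ isometrically onto $H_{QR}$: since $\cL_{Q}$ is an isometry of $(\G, g^{\G})$ it sends $V_{R}=\ker(d_{R}\pi_{0})$ onto $V_{QR}=\ker(d_{QR}\pi_{0})$ (using $(\ref{intertwineTranslations})$ again, as $\cL^{\B}_{Q}$ is a diffeomorphism), hence sends orthogonal complements to orthogonal complements. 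It follows that for the horizontal lift $\Delta^{\sharp}_{R} \in H_{R}$ of $\Delta$, the vector $d_{R}\cL_{Q}(\Delta^{\sharp}_{R}) \in H_{QR}$ is precisely the horizontal lift at $QR$ of $d_{b}\cL^{\B}_{Q}(\Delta)$, i.e. $\bigl(d_{b}\cL^{\B}_{Q}(\Delta)\bigr)^{\sharp}_{QR} = d_{R}\cL_{Q}(\Delta^{\sharp}_{R})$.

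Then I would conclude using the defining formula $(\ref{metricB})$ for $g^{\B}$: for $\Delta, \Delta' \in T_{b}\B$,
\begin{equation*}
g^{\B}_{\cL^{\B}_{Q}(b)}\bigl(d_{b}\cL^{\B}_{Q}(\Delta), d_{b}\cL^{\B}_{Q}(\Delta')\bigr)
= g^{\G}_{QR}\bigl(d_{R}\cL_{Q}(\Delta^{\sharp}_{R}), d_{R}\cL_{Q}((\Delta')^{\sharp}_{R})\bigr)
= g^{\G}_{R}\bigl(\Delta^{\sharp}_{R}, (\Delta')^{\sharp}_{R}\bigr)
= g^{\B}_{b}(\Delta, \Delta'),
\end{equation*}
where the middle equality is the fact that $\cL_{Q}$ is an isometry of $(\G, g^{\G})$ and the last is $(\ref{metricB})$ again. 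This proves that $\cL^{\B}_{Q}$ is an isometry. The only genuinely delicate point is the identification of horizontal lifts, i.e. verifying that $d_{R}\cL_{Q}$ carries $H_{R}$ onto $H_{QR}$; everything else is bookkeeping with $(\ref{intertwineTranslations})$ and the definition of the submersion metric. (Alternatively, one could invoke that $\widehat{\pi_{0}}$ is an isometry and that $\G$ acts by isometries on the normal homogeneous space $\G/\K$ via $R\K \mapsto (QR)\K$, but the submersion argument is self-contained given what precedes.)
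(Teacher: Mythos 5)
Your proof is correct, but takes a genuinely different route from the paper's. The paper simply invokes Corollary~$\ref{corrNormalHom}(ii)$ (which itself rests on O'Neill's result, Proposition~$\ref{scaProdMetric}$) to get that the quotient metric $\overline{g}^{\G}$ on $\G/\K$ is $\G$-invariant, and then transports this through the isometry $\widehat{\pi_{0}}$ via the intertwining identity $\widehat{\pi_{0}}\circ\cL^{\G/\K}_{Q}=\cL^{\B}_{Q}\circ\widehat{\pi_{0}}$. You instead work entirely downstairs with the Riemannian submersion $\pi_{0}$: you establish $\pi_{0}\circ\cL_{Q}=\cL^{\B}_{Q}\circ\pi_{0}$, deduce that $d_{R}\cL_{Q}$ carries $V_{R}$ onto $V_{QR}$ (hence $H_{R}$ onto $H_{QR}$ by orthogonality), identify horizontal lifts, and read off the isometry property from the defining formula $(\ref{metricB})$. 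Both arguments are sound; the paper's is shorter and leans on the already-developed abstract machinery for invariant metrics on reductive homogeneous spaces, while yours is self-contained, needs only left-invariance of $g^{\G}$ (no appeal to $\Ad(\K)$-invariance or to Proposition~$\ref{scaProdMetric}$), and in passing establishes the equivariance of horizontal lifts, $\bigl(d_{b}\cL^{\B}_{Q}(\Delta)\bigr)^{\sharp}_{QR}=d_{R}\cL_{Q}\bigl(\Delta^{\sharp}_{R}\bigr)$, which is a reusable fact. You even flag the paper's route as your ``alternative'' at the end.
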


\begin{proof}
By $(ii)$ of Corollary $\ref{corrNormalHom}$, $\overline{g}^{\G}$ is a $\G$-invariant metric on $\G/\K$. So, the result holds because 
\begin{equation*}
\widehat{\pi_{0}} \circ \cL^{\G/\K}_{Q} = \cL^{\B}_{Q} \circ \widehat{\pi_{0}},
\end{equation*}

\noindent
which follows from the definition of a group action. 
\end{proof}

\noindent
Throughout the sequel, for $b \in \B$ and $\Delta \in T_{b}\B$, we set
\begin{equation*}
Q \cdot [\Delta] := \left(d_{b}\cL^{\B}_{Q}\right)(\Delta) \in T_{Q \cdot b}\B.
\end{equation*}

\noindent
Furthermore, we denote by $Q_{b}$ an element of $\G$ such that $b = \cL^{\B}_{Q_{b}}(b_{0})$ i.e. $b=Q_{b} \cdot b_{0}=\pi_{0}(Q_{b})$. Now, the map $\sigma_{0} : \cV_{b_{0}} \lra \G$ of Corollary \ref{corLocSection} is a \textit{smooth section} of $\pi_{0}$. In the sequel, we assume that 
\begin{equation}\label{QbEqualSig}
Q_{b}=\sigma_{0}(b), \quad b \in \cV_{b_{0}}.
\end{equation}

\subsubsection{Orbital maps onto $\B$}

For $Q\K \in \G/\K$, the orbital map $\psi_{Q\K} : \G \lra \G/\K$ defined in $(\ref{orbitalMapGK})$ of Section $\ref{sect2}$ is called an orbital map onto $\G/\K$. Similarly, we define orbital maps onto $\B$. Namely, for $b \in \B$, let $\pi_{b} : \G \lra \B$ be the map defined by  
\begin{equation}\label{orbitalMapB}
\pi_{b}(Q) = \cL^{\B}_{Q}(b) = Q \cdot b, \quad Q \in \G. 
\end{equation}

\noindent
Thus, $\pi_{b_{0}}=\pi_{0}$. The orbital maps onto $\G/\K$ and onto $\B$ are linked by $(\ref{linkOrbitalMaps})$ hereafter: 
\begin{equation}\label{linkOrbitalMaps}
\widehat{\pi_{0}} \circ \psi_{Q\K} = \pi_{b}, \quad Q\K \in \G / \K, \enskip b=\widehat{\pi_{0}}(Q\K), 
\end{equation}

\noindent
 which follows from the definitions. Now, we link the orbital maps $\pi_{b}$ to the main one $\pi_{0}$. On the one hand, 
\begin{equation}\label{PIBandR}
\pi_{0} \circ \cR_{Q_{b}} : R \lmp RQ_{b} \lmp R \cdot b, 
\quad \textrm{i.e.} \quad
\pi_{b} = \pi_{0} \circ \cR_{Q_{b}}.
\end{equation} 
 
\noindent
On the other hand, recalling that the map $J_{Q} : \G \lra \G$ is defined by $J_{Q} : R \lmp QRQ^{-1}$, we have:
\begin{equation}\label{PIBandJ}
\cL^{\B}_{Q_{b}} \circ \pi_{0} \circ J_{Q_{b}^{-1}} : R \lmp Q_{b}^{-1}RQ_{b} \lmp \left( Q_{b}^{-1}R \right) \cdot b \lmp R \cdot b, 
\quad \textrm{i.e.} \quad
\pi_{b} = \cL^{\B}_{Q_{b}} \circ \pi_{0} \circ J_{Q_{b}^{-1}}.
\end{equation}
 
\noindent
We differentiate $(\ref{PIBandR})$ and $(\ref{PIBandJ})$ at $e$, and we obtain the following key Lemma. 
 
\begin{lem}\label{MBandMH}
For all $b \in \B$, let $Q_{b} \in \G$ such that $b=Q_{b} \cdot b_{0}$. Then, 
 
\smallskip
 
\noindent
$(i)$ $\fk_{b} = \cR_{Q_{b}^{-1}}\left[V_{Q_{b}}\right]$ and $\fm_{b} = \cR_{Q_{b}^{-1}}\left[H_{Q_{b}}\right]$.

\smallskip

\noindent
$(ii)$ $\fk_{b}=\Ad(Q_{b})(\fk)$ and $\fm_{b}=\Ad(Q_{b})(\fm)$.
\end{lem}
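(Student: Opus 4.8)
The statement follows by differentiating the two identities $(\ref{PIBandR})$ and $(\ref{PIBandJ})$ at the neutral element $e$ and identifying kernels and horizontal complements. I will first set up notation: for $b \in \B$, write $\fk_b := \ker(d_e \pi_b)$ and $\fm_b := \fk_b^{\perp}$ (orthogonal wrt $\langle \cdot, \cdot \rangle$ on $\g$), so that $\g = \fk_b \oplus \fm_b$; note that for $b = b_0$ this recovers $\fk_{b_0} = \fk$ and $\fm_{b_0} = \fm$ by $(\ref{kerTangentPi})$ and the definition of $\fm$. The content of the lemma is to express $\fk_b, \fm_b$ both in terms of the vertical/horizontal distribution of $\pi_0$ at the point $Q_b$, and in terms of the adjoint action.

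\medskip

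\noindent\textbf{Proof of $(i)$.} I differentiate $(\ref{PIBandR})$, i.e. $\pi_b = \pi_0 \circ \cR_{Q_b}$, at $e$. Since $\cR_{Q_b} : \G \to \G$ is a diffeomorphism sending $e$ to $Q_b$, with $d_e \cR_{Q_b} : \g = T_e\G \to T_{Q_b}\G$ a linear isomorphism, the chain rule gives $d_e \pi_b = (d_{Q_b}\pi_0) \circ (d_e \cR_{Q_b})$. Taking kernels, $\fk_b = \ker(d_e\pi_b) = (d_e\cR_{Q_b})^{-1}\big(\ker d_{Q_b}\pi_0\big) = (d_e\cR_{Q_b})^{-1}(V_{Q_b})$, which in the notation of the excerpt is exactly $\cR_{Q_b^{-1}}[V_{Q_b}]$ (using that $d_e\cR_{Q_b}$ and $d_{Q_b}\cR_{Q_b^{-1}}$ are mutually inverse, since $\cR_{Q_b^{-1}} \circ \cR_{Q_b} = \Id$). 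For the horizontal part, since $\cR_{Q_b}$ is an isometry of $(\G, g^{\G})$ (right multiplications are isometries for a bi-invariant metric), $d_e\cR_{Q_b}$ carries the orthogonal complement of $\fk_b$ in $T_e\G$ to the orthogonal complement of $V_{Q_b}$ in $T_{Q_b}\G$, i.e. $\cR_{Q_b}[\fm_b] = H_{Q_b}$, equivalently $\fm_b = \cR_{Q_b^{-1}}[H_{Q_b}]$. This proves $(i)$.

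\medskip

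\noindent\textbf{Proof of $(ii)$.} I differentiate $(\ref{PIBandJ})$, i.e. $\pi_b = \cL^{\B}_{Q_b} \circ \pi_0 \circ J_{Q_b^{-1}}$, at $e$. Here $J_{Q_b^{-1}}$ fixes $e$, with $d_e J_{Q_b^{-1}} = \Ad(Q_b^{-1})$ by definition of the adjoint representation; $\pi_0$ sends $e$ to $b_0$ with kernel $\fk$; and $\cL^{\B}_{Q_b}$ is a diffeomorphism of $\B$. Hence $d_e\pi_b = (d_{b_0}\cL^{\B}_{Q_b}) \circ (d_e\pi_0) \circ \Ad(Q_b^{-1})$, and since $d_{b_0}\cL^{\B}_{Q_b}$ is a linear isomorphism,
\begin{equation*}
\fk_b = \ker(d_e\pi_b) = \Ad(Q_b^{-1})^{-1}\big(\ker d_e\pi_0\big) = \Ad(Q_b)(\fk),
\end{equation*}
using $\Ad(Q_b^{-1})^{-1} = \Ad(Q_b)$. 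For the complement, since $g^{\G}$ is bi-invariant, $\langle \cdot, \cdot \rangle = g^{\G}_e$ is $\Ad(\G)$-invariant by Proposition $\ref{biInvScal}$, so $\Ad(Q_b)$ is an orthogonal transformation of $(\g, \langle \cdot, \cdot \rangle)$; it therefore maps $\fk^{\perp} = \fm$ onto $\big(\Ad(Q_b)(\fk)\big)^{\perp} = \fk_b^{\perp} = \fm_b$, giving $\fm_b = \Ad(Q_b)(\fm)$.

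\medskip

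\noindent\textbf{Main obstacle.} There is no serious obstacle; this is essentially bookkeeping with the chain rule. The one point that deserves care is the consistency of the two descriptions and the well-definedness: $Q_b$ is only determined up to right multiplication by $\K$, and one should check (or remark) that both right-hand sides are independent of this choice — for $(ii)$ because $\Ad(Q_b k)(\fk) = \Ad(Q_b)\Ad(k)(\fk) = \Ad(Q_b)(\fk)$ as $\K$ preserves $\fk$, and similarly for $\fm$; for $(i)$ because $\cR_{(Q_bk)^{-1}}[V_{Q_bk}] = \cR_{Q_b^{-1}}\cR_{k^{-1}}[V_{Q_bk}]$ and $\cR_{k^{-1}}$ maps $V_{Q_bk}$ to $V_{Q_b}$ since right $\K$-translation permutes the fibers of $\pi_0$. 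A second minor point is simply recording that $\fk_{b} \oplus \fm_b = \g$ so that the two parts of each item are genuinely complementary — immediate from the definition $\fm_b = \fk_b^{\perp}$ in a finite-dimensional inner product space. One may also note that $(ii)$ can be recovered from $(i)$ together with the relation $H_{Q_b} = \cR_{Q_b}\big[\Ad(Q_b)(\fm)\big]$ coming from left-invariance, but the direct differentiation of $(\ref{PIBandJ})$ is cleaner.
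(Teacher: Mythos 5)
Your proof is correct and follows essentially the same route as the paper's: differentiate $(\ref{PIBandR})$ and $(\ref{PIBandJ})$ at $e$, take kernels, and pass to orthogonal complements using that right translations are isometries and that $\langle\cdot,\cdot\rangle$ is $\Ad(\G)$-invariant (Proposition~$\ref{biInvScal}$). The extra remarks on independence of the choice of $Q_b$ are a nice addition, though not needed for the statement as written.
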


\begin{proof}
$(i)$ By $(\ref{PIBandR})$, $d_{e}\pi_{b} = d_{Q_{b}}\pi_{0} \circ d_{e}\cR_{Q_{b}}$. So, $\fk_{b} = \left(d_{e}\cR_{Q_{b}} \right)^{-1} (V_{Q_{b}}) = \cR_{Q_{b}^{-1}}\left[V_{Q_{b}}\right]$. Since $\cR_{Q_{b}^{-1}}$ is an isometry of $\left( \G, g^{\G} \right)$, we deduce that $\fm_{b} = \cR_{Q_{b}^{-1}}\left[H_{Q_{b}}\right]$. 

\smallskip

\noindent
$(ii)$ By $(\ref{PIBandJ})$, $d_{e}\pi_{b}= d_{b_{0}}\cL^{\B}_{Q_{b}} \circ d_{e}\pi_{0} \circ \Ad\left(Q_{b}^{-1}\right)$. Therefore, $\fk_{b}=\Ad(Q_{b})(\fk)$. By Proposition $\ref{biInvScal}$, $\left\langle \cdot, \cdot \right\rangle$ is $\Ad(\G)$-invariant on $\g$, which implies that $\fm_{b}=\Ad(Q_{b})(\fm)$. 
\end{proof}
 
\subsubsection{Fundamental vector fields on $\B$}

Recall that for $u \in \g$, the \textit{fundamental vector field} $\widetilde{u} \in \X(\G / \K)$ is defined by 
$\widetilde{u}(Q\K) = \left(d_{e} \psi_{Q\K}\right)(u)$, for $Q\K \in \G / \K$. Similarly, we define a fundamental vector field $u^{*} \in \X(\B)$ by 
 \begin{equation}\label{defFVFonB}
u^{*}(b) = \left(d_{e} \pi_{b}\right)(u) \in T_{b}\B, \quad b \in \B. 
\end{equation}

\noindent
Then, we precise the link between these fundamental vector fields on $\G / \K$ and on $\B$. Namely, 
\begin{equation}\label{linkFVFs}
u^{*}=\left( \widehat{\pi} \right)_{*} \widetilde{u}, \quad u \in \g, 
\end{equation}

\noindent
which is a direct consequence of the definition of orbital maps and of $(\ref{linkOrbitalMaps})$.

\subsubsection{Lifts to the Lie subspace}

Here, we introduce the maps which relate vector fields on $\B$ to the Lie subspace $\fm$ in $\g$. Set   
\begin{equation*}
\fk_{b}:=\ker(d_{e}\pi_{b}) \quad \textrm{and} \quad \fm_{b}:=\fk_{b}^{\perp}, \quad b \in \B. 
\end{equation*}

\begin{lem}\label{UisOpen}
Let $\U$ be the set of elements $b \in \B$ such that $\fm$ is a complement of $\fk_{b}$ in $\g$, i.e. 
\begin{equation*}
\U := \left\{ b \in \B : \fk_{b} \cap \fm = \left\{0\right\} \right\}. 
\end{equation*}

\noindent
Then, $\U$ is an open neighborhood of $b_{0}$ in $\B$. 
\end{lem}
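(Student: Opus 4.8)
The plan is to exhibit $\U$ as the preimage of an open set under a continuous map, using the fact that "being a complement of $\fk_b$" is an open condition in the right parameter space. First I would observe that by part $(ii)$ of Lemma $\ref{MBandMH}$, for $b \in \cV_{b_{0}}$ with $Q_{b} = \sigma_{0}(b)$ we have $\fk_{b} = \Ad(Q_{b})(\fk)$, and since $\sigma_{0}$ is a smooth section of $\pi_{0}$ with $\sigma_{0}(b_{0}) = e$, the assignment $b \mapsto \Ad(\sigma_{0}(b))$ is a smooth (hence continuous) map from $\cV_{b_{0}}$ into $\mathrm{Aut}(\g) \subset \mathrm{End}(\g) \cong \R^{(N+K)^{2}}$. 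At $b_{0}$ this gives $\Ad(e) = \Id$, so $\fk_{b_{0}} = \fk$, which is indeed a complement of $\fm$ in $\g$; thus $b_{0} \in \U$.

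Next I would encode the complementarity condition linearly. Fix a basis $(\eta_{1}, \dots, \eta_{K})$ of $\fk$; then $\fk_{b} = \mathrm{span}\,(\Ad(Q_{b})\eta_{1}, \dots, \Ad(Q_{b})\eta_{K})$. Combined with the fixed orthonormal basis $(\epsilon_{1}, \dots, \epsilon_{N})$ of $\fm$, the condition $\fk_{b} \oplus \fm = \g$ (dimension count: $K + N = \dim \g$, so it is equivalent to $\fk_{b} \cap \fm = \{0\}$, and also to the sum being direct) holds if and only if the $(N+K) \times (N+K)$ matrix whose columns are the coordinate vectors of $\Ad(Q_{b})\eta_{1}, \dots, \Ad(Q_{b})\eta_{K}, \epsilon_{1}, \dots, \epsilon_{N}$ is invertible, i.e. has nonzero determinant. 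Call this determinant $D(b)$. Since $b \mapsto \Ad(\sigma_{0}(b))$ is continuous on $\cV_{b_{0}}$ and the determinant is a polynomial in the matrix entries, $D$ is continuous on $\cV_{b_{0}}$, and $D(b_{0}) = \det(\text{basis of }\fk \text{ then basis of }\fm) \neq 0$ because $\fk \oplus \fm = \g$. Hence $\{b \in \cV_{b_{0}} : D(b) \neq 0\}$ is an open neighborhood of $b_{0}$, and by the equivalence just noted it equals $\U \cap \cV_{b_{0}}$; since $\cV_{b_{0}}$ is open in $\B$, this set is open in $\B$. As $b_{0}$ lies in it, $\U$ contains an open neighborhood of $b_{0}$.

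The only subtlety — and the place I would be careful — is that the definition of $\U$ quantifies over \emph{all} $b \in \B$, whereas the description $\fk_{b} = \Ad(Q_{b})(\fk)$ via a smooth section is only available on $\cV_{b_{0}}$. But the lemma only asserts that $\U$ is an open \emph{neighborhood of $b_{0}$}, so it suffices to produce one open set $\cV_{b_{0}} \cap \{D \neq 0\} \subset \U$ containing $b_{0}$; we need not show $\U$ is open globally. (Alternatively one can note that for arbitrary $b$, $Q_{b}$ is determined up to right multiplication by $\K$, and $\Ad(Q_{b}k)(\fk) = \Ad(Q_{b})(\fk)$ since $\Ad(k)(\fk) = \fk$, so $\fk_{b}$ depends only on $b$; but choosing $Q_{b}$ \emph{continuously} in $b$ is exactly what a smooth local section buys us, and globally no such choice exists, which is why we restrict to $\cV_{b_{0}}$.) This restriction is harmless and is in fact already how $Q_{b}$ is normalized in $(\ref{QbEqualSig})$.
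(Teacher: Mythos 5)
Your proof is correct and rests on the same conceptual core as the paper's: express $\fk_{b}$ via Lemma $\ref{MBandMH}$$(ii)$ as $\Ad(Q_{b})(\fk)$ with $Q_{b}=\sigma_{0}(b)$, use the smoothness of the local section $\sigma_{0}$ to get continuity of $b \mapsto \fk_{b}$, and observe that transversality to the fixed subspace $\fm$ is an open condition. The only difference is in how that open condition is realized: the paper phrases it as the preimage under $\kappa : b \mapsto \fk_{b}$ of the open set $\mathfrak{U}_{\fm} \subset \Gras(K,\g)$ of subspaces transverse to $\fm$, whereas you encode it directly as the nonvanishing of a determinant $D(b)$ formed by stacking a basis of $\Ad(\sigma_{0}(b))(\fk)$ alongside the fixed basis of $\fm$. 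The two are interchangeable (indeed $D(b)\neq 0$ is precisely the chart-level description of $\fk_{b}\in\mathfrak{U}_{\fm}$); your version is a bit more elementary and self-contained since it avoids invoking the topology of the Grassmannian. Your closing remark about the locality of $\sigma_{0}$ is apt, and applies equally to the paper's proof: both arguments as written establish openness of $\U\cap\cV_{b_{0}}$, which suffices for "open neighborhood of $b_{0}$," and both extend to show $\U$ is globally open by replacing $\sigma_{0}$ with a local section $\mathfrak{S}_{\beta}^{x_{\beta}}$ around any $\beta$.
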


\begin{proof}
See Appendix. 
\end{proof}

\noindent
For all $b \in \B$, the map $d_{e}\pi_{b} : \g \lra T_{b}\B$ is surjective. Then, by Lemma $\ref{surjectiveLinearMap}$ below, for all $b \in \U$, the restriction $\left(d_{e}\pi_{b}\right)_{| \fm} : \fm \lra T_{b}\B$ is an isomorphism.

\begin{lem}\label{surjectiveLinearMap}
Let $E$ and $F$ be vector spaces of finite dimension and $f : E \lra F$ a surjective linear map. Let $E'$ be a linear  subspace of $E$ such that $\dim(E')=\dim(F)$ and $E'$ is a complement of $\ker(f)$ in $E$. Then, the restriction $f_{|E'} : E' \lra F$ is an isomorphism.  
\end{lem}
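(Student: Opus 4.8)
The plan is to prove the standard linear-algebra fact that a surjective linear map restricts to an isomorphism on any complement of its kernel. First I would argue injectivity of $f_{|E'}$: if $x \in E'$ satisfies $f(x) = 0$, then $x \in \ker(f) \cap E' = \{0\}$, since $E'$ being a complement of $\ker(f)$ means precisely that $E' \cap \ker(f) = \{0\}$. Hence $f_{|E'}$ is injective.

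Next I would deduce surjectivity from a dimension count. Since $E = \ker(f) \oplus E'$, we have $\dim(E) = \dim(\ker(f)) + \dim(E')$; combining this with the rank-nullity theorem $\dim(E) = \dim(\ker(f)) + \dim(\Img(f))$ and the surjectivity hypothesis $\Img(f) = F$ gives $\dim(E') = \dim(F)$. (Alternatively, one simply invokes the hypothesis $\dim(E') = \dim(F)$ directly, which is already assumed in the statement.) An injective linear map between finite-dimensional vector spaces of equal dimension is automatically surjective, so $f_{|E'} : E' \lra F$ is a bijective linear map, i.e. an isomorphism.

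There is really no obstacle here: the statement is elementary and the proof is a one-line kernel argument plus a dimension count. The only thing to be careful about is which of the two dimension facts to use — the hypothesis already hands us $\dim(E') = \dim(F)$, so invoking rank-nullity is not even strictly necessary. I would write it as follows.

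\begin{proof}
Since $E'$ is a complement of $\ker(f)$ in $E$, we have $E' \cap \ker(f) = \{0\}$. Hence, if $x \in E'$ satisfies $f_{|E'}(x) = f(x) = 0$, then $x \in E' \cap \ker(f) = \{0\}$, so $x = 0$. Therefore $f_{|E'}$ is injective. Now $f_{|E'}$ is an injective linear map between the finite-dimensional vector spaces $E'$ and $F$, and by hypothesis $\dim(E') = \dim(F)$. A linear injection between finite-dimensional spaces of equal dimension is necessarily surjective. Thus $f_{|E'} : E' \lra F$ is bijective, i.e. an isomorphism.
\end{proof}
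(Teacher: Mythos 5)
Your proof is correct. The paper in fact states Lemma \ref{surjectiveLinearMap} without proof, treating it as an elementary fact of linear algebra; your argument (injectivity from $E' \cap \ker(f) = \{0\}$, surjectivity from the equal-dimension hypothesis) is exactly the standard and intended one.
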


\noindent
Then, we may introduce the main definitions of this Section.

\begin{defi}
For $b \in \U$ and $\Delta \in T_{b}\B$, the $\fm$-lift of $\Delta$ wrt $\pi_{b}$ is the element $\omega_{b}(\Delta) \in \fm$ defined by 
\begin{equation*}
\omega_{b}(\Delta) := \left(\left( d_{e}\pi_{b} \right)_{|\fm}\right)^{-1} \left( \Delta \right). 
\end{equation*}

\noindent
In particular, with the notations $(\ref{HorVerRS})$, $\fm=H_{e}$. Therefore, for any $\Delta \in T_{b_{0}}\B$,  
\begin{equation}\label{linkFVFandRS}
\omega_{b_{0}}(\Delta) = \left(  \left(d_{e}\pi_{0} \right)_{|\fm}\right)^{-1} \left( \Delta \right) = \Delta^{\sharp}_{e}. 
\end{equation}

\noindent
For $b \in \U$, the map $\omega_{b} : T_{b}\B \lra \fm$ is linear. Thus, the map $\omega : b \in \U \lmp \omega_{b}$ is called the fundamental $1$-form. 
\end{defi}

\begin{defi}\label{defMliftFunc}
For $X \in \X(\B)$, the $\fm$-lift function of $X$ is the map  
\begin{equation*}
\overline{X} : \U \lra \fm,
\quad \overline{X}(b) = \omega_{b}(X(b)). 
\end{equation*}
\end{defi}

\subsubsection{Expression of the $\fm$-lift function}

The smoothness of the $\fm$-lift function is not clear from its definition. We establish in Proposition $\ref{propMliftFunction}$ below an expression of the $\fm$-lift function that allows us to prove its smoothness in a neighborhood of $b_{0}$ in Proposition $\ref{smoothXbar}$ below. First, we introduce preliminary definitions hereafter. 

\begin{defi}
For all $b \in \U$ and $\Delta \in T_{b}\B$, the $\fm_{b}$-lift of $\Delta$ is 
\begin{equation}\label{mbLift}
\Delta^{[b]}:=\left(\left(d_{e}\pi_{b}\right)_{|\fm_{b}}\right)^{-1} (\Delta) \in \fm_{b}.
\end{equation}
\end{defi}

\begin{defi}
For $b \in \B$, let $p_{b} : \fm \lra \fm_{b}$ be the restriction to $\fm$ of the orthogonal projection onto $\fm_{b}$ in $\g$. 
\end{defi}

\smallskip

\noindent
Lemma $\ref{surjectiveLinearMap}$ implies that, for any $b \in \U$, $p_{b}$ is an \textit{isomorphism}. The expression of the $\fm$-lift function 
$\overline{X}$ obtained hereafter involves the inverse of $p_{b}$, which is described in Lemma $\ref{inverseProjection}$ below.

\begin{prop}\label{propMliftFunction}
For $b \in \U$, the $\fm_{b}$-lift of $\Delta  \in T_{b}\B$ is given by 
\begin{equation}\label{DeltaHofB}
\Delta^{[b]} = \cR_{Q_{b}^{-1}}\left[ \Delta^{\sharp}_{Q_{b}} \right].
\end{equation}

\noindent
Then, the $\fm$-lift function of $X \in \X(\B)$ is expressed as 
\begin{equation}\label{mLiftExpression}
\overline{X}(b) = p_{b}^{-1}\left( X(b)^{[b]} \right) = 
p_{b}^{-1}\left( \cR_{Q_{b}^{-1}}\left[ X(b)^{\sharp}_{Q_{b}} \right] \right), \quad b \in \V. 
\end{equation}

\end{prop}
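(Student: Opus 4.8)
The plan is to prove the two assertions of Proposition~\ref{propMliftFunction} in turn, the first being the geometric heart of the matter and the second a formal consequence.

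\textbf{Step 1: proving $(\ref{DeltaHofB})$.} Fix $b \in \U$ and $\Delta \in T_{b}\B$. I want to show that $\cR_{Q_{b}^{-1}}[\Delta^{\sharp}_{Q_{b}}]$ is the $\fm_{b}$-lift of $\Delta$, i.e. that it lies in $\fm_{b}$ and that $(d_{e}\pi_{b})$ sends it to $\Delta$. For the first point, recall from Lemma~\ref{MBandMH}$(i)$ that $\fm_{b} = \cR_{Q_{b}^{-1}}[H_{Q_{b}}]$; since the horizontal lift $\Delta^{\sharp}_{Q_{b}}$ lies in $H_{Q_{b}}$ by $(\ref{horLiftClassic})$, its image under $\cR_{Q_{b}^{-1}}$ lies in $\fm_{b}$. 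For the second point, differentiate the identity $\pi_{b} = \pi_{0} \circ \cR_{Q_{b}}$ from $(\ref{PIBandR})$ at $e$: we get $d_{e}\pi_{b} = d_{Q_{b}}\pi_{0} \circ d_{e}\cR_{Q_{b}}$, so that
\begin{equation*}
\left(d_{e}\pi_{b}\right)\left( \cR_{Q_{b}^{-1}}\left[\Delta^{\sharp}_{Q_{b}}\right] \right) = \left(d_{Q_{b}}\pi_{0}\right)\left( \Delta^{\sharp}_{Q_{b}} \right) = \Delta,
\end{equation*}
the last equality by the defining property $(\ref{horLiftClassic})$ of the horizontal lift. Since $(d_{e}\pi_{b})_{|\fm_{b}}$ is an isomorphism onto $T_{b}\B$ by Lemma~\ref{surjectiveLinearMap} (using $b \in \U$), this identifies $\cR_{Q_{b}^{-1}}[\Delta^{\sharp}_{Q_{b}}]$ as $\Delta^{[b]}$, which is $(\ref{DeltaHofB})$.

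\textbf{Step 2: proving $(\ref{mLiftExpression})$.} Now fix $X \in \X(\B)$ and $b$ in the relevant neighborhood. By Definition~\ref{defMliftFunc}, $\overline{X}(b) = \omega_{b}(X(b)) \in \fm$ is the unique element of $\fm$ with $(d_{e}\pi_{b})(\overline{X}(b)) = X(b)$, while $X(b)^{[b]} \in \fm_{b}$ is the unique element of $\fm_{b}$ with $(d_{e}\pi_{b})(X(b)^{[b]}) = X(b)$. I claim $p_{b}(\overline{X}(b)) = X(b)^{[b]}$. Indeed, write $\overline{X}(b) = X(b)^{[b]} + n$ where $n$ is the component of $\overline{X}(b)$ in $\fm_{b}^{\perp} = \fk_{b}$ (decomposing along $\g = \fm_{b} \oplus \fk_{b}$); then $p_{b}(\overline{X}(b)) = X(b)^{[b]}$ by definition of $p_{b}$ as orthogonal projection onto $\fm_{b}$ restricted to $\fm$, provided $\overline{X}(b) \in \fm$, which holds. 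To see that the $\fm_{b}$-component of $\overline{X}(b)$ is exactly $X(b)^{[b]}$: apply $d_{e}\pi_{b}$, which kills $\fk_{b}$, to get $(d_{e}\pi_{b})(\text{$\fm_{b}$-component of }\overline{X}(b)) = (d_{e}\pi_{b})(\overline{X}(b)) = X(b) = (d_{e}\pi_{b})(X(b)^{[b]})$, and since both the $\fm_{b}$-component and $X(b)^{[b]}$ lie in $\fm_{b}$ where $d_{e}\pi_{b}$ is injective, they coincide. Hence $p_{b}^{-1}(X(b)^{[b]}) = \overline{X}(b)$, and substituting the formula from Step~1 with $\Delta = X(b)$ gives the second equality in $(\ref{mLiftExpression})$. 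Here the hypothesis $b \in \V$ (a neighborhood of $b_{0}$ contained in $\U \cap \cV_{b_{0}}$, on which $Q_{b} = \sigma_{0}(b)$ is well-defined by $(\ref{QbEqualSig})$) ensures every object in the formula makes sense.

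\textbf{Main obstacle.} The substantive content is entirely in Step~1, specifically in correctly invoking Lemma~\ref{MBandMH}$(i)$ to identify $\fm_{b}$ with $\cR_{Q_{b}^{-1}}[H_{Q_{b}}]$ and thereby controlling which space the candidate lift lands in; the rest is bookkeeping with orthogonal decompositions. One should be careful that the formula is stated for $\Delta^{[b]}$ independently of the choice of $Q_{b}$ with $b = Q_{b}\cdot b_{0}$ — this is consistent because changing $Q_{b}$ to $Q_{b}k$ with $k \in \K$ changes neither $H_{Q_{b}}$-versus-$\fm_{b}$ correspondence in a way that affects the final element, as can be checked from right-invariance of $g^{\G}$; but since $(\ref{QbEqualSig})$ pins down $Q_{b} = \sigma_{0}(b)$, this ambiguity does not arise in the stated formula. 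A minor point to verify is that $\V \subset \U$ so that $p_{b}^{-1}$ exists, which follows from Lemma~\ref{UisOpen} together with the openness of $\cV_{b_{0}}$.
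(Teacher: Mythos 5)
Your proof is correct and follows essentially the same route as the paper's: you use Lemma~\ref{MBandMH}$(i)$ together with the differential of $\pi_{b} = \pi_{0} \circ \cR_{Q_{b}}$ to identify $\cR_{Q_{b}^{-1}}[\Delta^{\sharp}_{Q_{b}}]$ as the $\fm_{b}$-lift (the paper organizes this as computing the inverse of a commutative triangle, you verify the candidate directly, which amounts to the same computation), and then you identify the $\fm_{b}$-component of $\overline{X}(b)$ via the orthogonal decomposition $\g = \fm_{b}\oplus\fk_{b}$, exactly as the paper does via the observation that $\Delta^{[b]}-\omega_{b}(\Delta)\in\fk_{b}$. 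Your final remark on the independence from the choice of $Q_{b}\in\pi_{0}^{-1}(b)$ is a correct and worthwhile sanity check that the paper leaves implicit.
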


\begin{proof}
By $(i)$ of Lemma $\ref{MBandMH}$, the following diagram is commutative. 

\[\xymatrix{
  \fm_{b} \ar[rr]^{d_{e}\cR_{Q_{b}}} \ar[rrd]_{\left(d_{e}\pi_{b}\right)_{|\fm_{b}}} && H_{Q_{b}} \ar[d]^{d_{Q_{b}}\pi_{0}} \\
  && T_{b}\B  
  }
\]
 
\noindent
So, for any $b \in \U$, 
\begin{equation*}
\left( \left(d_{e}\pi_{b}\right)_{|\fm_{b}}\right)^{-1} = d_{Q_{b}}\cR_{Q_{b}^{-1}} \circ \left( \left(d_{e}\pi_{0}\right)_{|H_{Q_{b}}}\right)^{-1}.
\end{equation*}

\noindent
By evaluating this identity at $\Delta$, we obtain that $(\ref{DeltaHofB})$ holds. Now, by definition, 
\begin{equation*}
\left(d_{e}\pi_{b} \right)\left( \Delta^{[b]} \right) = \left(d_{e}\pi_{b} \right)\left( \omega_{b}(\Delta) \right) = \Delta.
\end{equation*}

\noindent
Thus,  
\begin{equation*}
\left( \Delta^{[b]} - \omega_{b}(\Delta) \right) \in \fk_{b}=\fm_{b}^{\perp}
\quad \textrm{and} \quad
\omega_{b}(\Delta) \in \fm.
\end{equation*}

\noindent
In other words,  
\begin{equation*}
\omega_{b}(\Delta) = p_{b}^{-1}\left( \Delta^{[b]} \right).
\end{equation*}

\noindent
Since $\overline{X}(b) = \omega_{b}(X(b))$, we derive the expression of $\overline{X}(b)$ in $(\ref{mLiftExpression})$.
\end{proof}

\noindent\\
For $b \in \U$, the map $p_{b}^{-1} : \fm_{b} \lra \fm$ is determined as follows. Recall that $(\epsilon_{k})_{1 \leq k \leq N}$ is an orthonormal basis of $\left( \fm , \langle \cdot , \cdot \rangle \right)$. Then, by $(ii)$ of Lemma $\ref{MBandMH}$, an orthonormal basis of $\fm_{b}$ is 
\begin{equation}\label{basisMB}
\left\{ \epsilon'_{k}(b) := \Ad(Q_{b})(\epsilon_{k}) : 1 \leq k \leq N \right\}. 
\end{equation}

\noindent
Now, for all $u \in \fm$, $p_{b}(u) = \sum\limits_{k=1}^{N} \left\langle u , \epsilon'_{k}(b) \right\rangle \epsilon'_{k}(b)$. So, the matrix of the linear map $p_{b} : \fm \lra \fm_{b}$ wrt the basis $(\epsilon_{k})_{k}$ of $\fm$ and $(\epsilon'_{k}(b))_{k}$ of $\fm_{b}$ is the $N \times N$ matrix $\cA(b)$ defined by 
\begin{equation}\label{defMatrixA}
\cA(b) = (\alpha_{k,\ell}(b))_{k, \ell} \quad \textrm{where} \quad
\alpha_{k,\ell}(b) = \left\langle \epsilon_{\ell} , \epsilon'_{k}(b) \right\rangle, \quad 1 \leq k \leq \ell \leq N. 
\end{equation}

\noindent
Then, the matrix $\cA(b)$ is invertible, and we deduce hereafter the expression of $p_{b}^{-1}$. 

\begin{lem}\label{inverseProjection}
Let $b \in \U$. For any $v \in \fm_{b}$, set $u:=p_{b}^{-1}(v) \in \fm$. Then, 
\begin{equation}\label{systemNN}
u = \sum\limits_{k=1}^{N} u_{k} \epsilon_{k}
\quad \textrm{where} \quad
u_{k} = \sum\limits_{\ell=1}^{N} \left( \cA(b)^{-1} \right)_{k, \ell} v_{\ell} = \sum\limits_{\ell=1}^{N} \left( \cA(b)^{-1} \right)_{k, \ell} \left\langle v, \epsilon'_{\ell}(b) \right\rangle. 
\end{equation}
\end{lem}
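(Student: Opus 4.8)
The plan is to unwind the definitions. We have $v \in \fm_{b}$ and $u := p_{b}^{-1}(v) \in \fm$, so by definition $p_{b}(u) = v$. First I would expand $u$ in the orthonormal basis $(\epsilon_{k})_{1 \leq k \leq N}$ of $\fm$ as $u = \sum_{k=1}^{N} u_{k} \epsilon_{k}$, so that $u_{k} = \langle u, \epsilon_{k} \rangle$, and then compute $p_{b}(u)$ using the formula $p_{b}(w) = \sum_{j=1}^{N} \langle w, \epsilon'_{j}(b) \rangle \epsilon'_{j}(b)$ recalled just before the lemma, together with linearity in $w$.

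Next I would compute the coordinates of the identity $p_{b}(u) = v$ in the orthonormal basis $(\epsilon'_{j}(b))_{j}$ of $\fm_{b}$. On the left, $p_{b}(u) = \sum_{k=1}^{N} u_{k} p_{b}(\epsilon_{k}) = \sum_{k=1}^{N} u_{k} \sum_{j=1}^{N} \langle \epsilon_{k}, \epsilon'_{j}(b) \rangle \epsilon'_{j}(b)$, and by definition $(\ref{defMatrixA})$ the inner coefficient is $\alpha_{j,k}(b)$, so the $j$-th coordinate of $p_{b}(u)$ is $\sum_{k=1}^{N} \alpha_{j,k}(b) u_{k}$. On the right, the $j$-th coordinate of $v$ is $v_{j} = \langle v, \epsilon'_{j}(b) \rangle$. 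Equating coordinates gives the linear system $\sum_{k=1}^{N} \alpha_{j,k}(b) u_{k} = v_{j}$ for $1 \leq j \leq N$, i.e. $\cA(b)\, \mathbf{u} = \mathbf{v}$ in matrix form, where $\mathbf{u} = (u_{k})_{k}$ and $\mathbf{v} = (v_{\ell})_{\ell}$.

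Finally I would invoke the invertibility of $\cA(b)$, already asserted in the paragraph preceding the lemma (it follows from Lemma $\ref{surjectiveLinearMap}$, since $p_{b}$ is an isomorphism for $b \in \U$ and $\cA(b)$ is its matrix in the chosen bases). Inverting the system yields $u_{k} = \sum_{\ell=1}^{N} (\cA(b)^{-1})_{k,\ell}\, v_{\ell} = \sum_{\ell=1}^{N} (\cA(b)^{-1})_{k,\ell}\, \langle v, \epsilon'_{\ell}(b) \rangle$, which is exactly $(\ref{systemNN})$.

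This is essentially a bookkeeping argument and I expect no real obstacle; the only point requiring a little care is the ordering of indices in $\alpha_{j,k}(b)$ versus $\alpha_{k,\ell}(b)$ as written in $(\ref{defMatrixA})$ — one must make sure the matrix acting on the coordinate vector of $u$ is $\cA(b)$ itself and not its transpose, which comes down to tracking which basis sits in the rows and which in the columns. Since $(\epsilon_k)_k$ and $(\epsilon'_k(b))_k$ are both orthonormal, no Gram-matrix corrections enter, and the identification of coordinates with inner products is immediate.
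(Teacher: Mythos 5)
Your proof is correct and is exactly the bookkeeping argument the paper implicitly uses: the paper states Lemma~\ref{inverseProjection} without proof, treating it as the immediate consequence of $\cA(b)$ being the matrix of $p_b$ in the orthonormal bases $(\epsilon_k)_k$ of $\fm$ and $(\epsilon'_k(b))_k$ of $\fm_b$. Your care about the index order in $\alpha_{j,k}(b)$ is warranted and correctly resolved: the $j$-th coordinate of $p_b(u)$ is $\sum_k \langle \epsilon_k,\epsilon'_j(b)\rangle\, u_k = \sum_k \alpha_{j,k}(b)\,u_k = (\cA(b)\mathbf{u})_j$, so the system is $\cA(b)\mathbf{u}=\mathbf{v}$ with $\cA(b)$ itself and not its transpose, as you observe.
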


\subsubsection{Smoothness of the $\fm$-lift function}

In $(\ref{mLiftExpression})$, for $X \in \X(\B)$, we have expressed $\overline{X}$ in function of \textit{horizontal lifts} wrt $\pi_{0}$ and of $Q_{b}$. On the one hand, the Riemannian submersion structure of $\pi_{0}$ implies that the horizontal lifts wrt $\pi_{0}$ of smooth vector fields on $\B$ are \textit{smooth} vector fields on $\G$. On the other hand, $\overline{X}$ is defined on $\U$ and we have assumed in $(\ref{QbEqualSig})$ that the map $b \lmp Q_{b}$ coincides with $\sigma_{0}$ on $\cV_{b_{0}}$, where this map is \textit{smooth}. Thus, these two arguments are used hereafter to prove that $\overline{X}$ is smooth on the open neighborhood $\V$ of $b_{0}$ defined by 
\begin{equation*}
\V := \U \cap \cV_{b_{0}}. 
\end{equation*}

\begin{prop}\label{smoothXbar}
For any $X \in \X(\B)$, its $\fm$-lift function $\overline{X}$ is smooth on $\V$. 
\end{prop}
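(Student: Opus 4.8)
The plan is to read off smoothness from the explicit expression for $\overline{X}$ established in Proposition \ref{propMliftFunction}, namely
\begin{equation*}
\overline{X}(b) = p_{b}^{-1}\!\left( \cR_{Q_{b}^{-1}}\!\left[ X(b)^{\sharp}_{Q_{b}} \right] \right), \quad b \in \V,
\end{equation*}
and to show that each of the three ingredients --- the point $Q_{b}$, the horizontal lift $X(b)^{\sharp}_{Q_{b}}$ together with the right translation $\cR_{Q_{b}^{-1}}$, and the inverse projection $p_{b}^{-1}$ --- depends smoothly on $b$ on the open set $\V = \U \cap \cV_{b_{0}}$. First I would invoke $(\ref{QbEqualSig})$: on $\cV_{b_{0}}$ we have taken $Q_{b} = \sigma_{0}(b)$, and by Corollary \ref{corLocSection} the section $\sigma_{0} : \cV_{b_{0}} \lra \G$ is smooth; so $b \lmp Q_{b}$ is a smooth map into $\G$ on $\V$.

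Next I would handle the horizontal part. Since $\pi_{0} : \G \lra \B$ is a Riemannian submersion, the horizontal lift of a smooth vector field on $\B$ is a smooth vector field on $\G$: concretely, if $\widetilde{X}$ denotes the horizontal lift of $X$ (the unique horizontal vector field $\pi_{0}$-related to $X$), then $\widetilde{X}$ is smooth and $X(b)^{\sharp}_{Q_{b}} = \widetilde{X}(Q_{b})$ for any $Q_{b} \in \pi_{0}^{-1}(b)$. Composing the smooth vector field $\widetilde{X}$ with the smooth map $b \lmp Q_{b}$ gives a smooth section $b \lmp X(b)^{\sharp}_{Q_{b}}$ of $T\G$ along $\sigma_{0}$. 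Applying $d_{Q_{b}}\cR_{Q_{b}^{-1}}$ then amounts to composing with the (globally smooth) map $\G \times T\G \to T\G$, $(Q,\Delta) \lmp \cR_{Q^{-1}}[\Delta]$, evaluated at $(Q_{b}, X(b)^{\sharp}_{Q_{b}})$; hence $b \lmp \cR_{Q_{b}^{-1}}[X(b)^{\sharp}_{Q_{b}}] \in \fm_{b} \subset \g$ is smooth as a $\g$-valued map on $\V$.

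Finally I would treat $p_{b}^{-1}$. By $(\ref{basisMB})$, $\fm_{b}$ has the orthonormal basis $\epsilon'_{k}(b) = \Ad(Q_{b})(\epsilon_{k})$; since $\Ad : \G \lra \Aut(\g)$ is smooth and $b \lmp Q_{b}$ is smooth, each $b \lmp \epsilon'_{k}(b)$ is smooth. Then the entries $\alpha_{k,\ell}(b) = \langle \epsilon_{\ell}, \epsilon'_{k}(b) \rangle$ of the matrix $\cA(b)$ in $(\ref{defMatrixA})$ are smooth functions of $b$, and by Lemma \ref{surjectiveLinearMap} (applicable since $b \in \U$) $\cA(b)$ is invertible on $\V$, so $b \lmp \cA(b)^{-1}$ is smooth by Cramer's rule. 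Writing $v(b) := \cR_{Q_{b}^{-1}}[X(b)^{\sharp}_{Q_{b}}]$, Lemma \ref{inverseProjection} gives $\overline{X}(b) = \sum_{k} u_{k}(b)\epsilon_{k}$ with $u_{k}(b) = \sum_{\ell} (\cA(b)^{-1})_{k,\ell} \langle v(b), \epsilon'_{\ell}(b) \rangle$, an expression built by sums and products from smooth functions; hence each $u_{k}$ is smooth on $\V$, and therefore $\overline{X}$ is smooth on $\V$. The one point requiring a little care --- and the closest thing to an obstacle --- is making rigorous the joint smoothness in the previous paragraph, i.e. that $b \lmp X(b)^{\sharp}_{Q_{b}}$ is smooth rather than merely fiberwise defined; this is exactly where the smoothness of the horizontal lift of a vector field under a Riemannian submersion, combined with the smooth section $\sigma_{0}$, is essential, and it is what forces the restriction to $\V$ (via $\cV_{b_{0}}$) in the statement.
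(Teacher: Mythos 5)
Your proposal is correct and takes essentially the same route as the paper: both read smoothness off the expression from Proposition \ref{propMliftFunction}, using the smooth section $\sigma_{0}$, the smoothness of the horizontal lift of a vector field under the Riemannian submersion $\pi_{0}$, and the smoothness of $b \lmp \cA(b)^{-1}$. The paper packages these steps as an explicit composition $\overline{X} = \mathrm{pr}_{2} \circ \Psi \circ \Phi_{X^{\sharp}} \circ \sigma_{0}$ and merely asserts that the entries of $\cA(b)^{-1}$ are smooth, whereas you supply the justification (smoothness of $\Ad$, hence of the $\epsilon'_{k}(\cdot)$, hence of $\cA(\cdot)$, then Cramer's rule since $\cA(b)$ is invertible on $\U$) --- a small but welcome addition of detail rather than a different argument.
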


\begin{proof}
For any manifold $\M$, we denote by $T\M$ its tangent bundle. Let $\cW_{0}$ be the subset of $T\G$ defined by 
\begin{equation*}
\cW_{0} := \left\{ (Q,v) \in T\G : Q \in \sigma_{0}(\V), v \in H_{Q} \right\}. 
\end{equation*}

\noindent
Then, the map $\overline{X} : \V \lra \fm$ is decomposed as follows. 
\begin{equation}\label{decXbarFunctions}
\overline{X} : b \lmp \sigma_{0}(b)=Q_{b} \lmp \left( Q_{b} , X(b)_{Q_{b}}^{\sharp} \right) \lmp \left( b , p_{b}^{-1}\left( \cR_{Q_{b}^{-1}}\left[ X(b)^{\sharp}_{Q_{b}} \right] \right) \right) \lmp \overline{X}(b). 
\end{equation}

\noindent
By $(\ref{mLiftExpression})$, the last map involved in $(\ref{decXbarFunctions})$ is the projection $\mathrm{pr}_{2}$ onto the second factor. Thus,  
\begin{equation*}
\overline{X} = \mathrm{pr}_{2} \circ \Psi \circ \Phi_{X^{\sharp}} \circ \sigma_{0} : \V \lra \sigma_{0}(\V) \lra \cW_{0} \lra \V \times \fm \lra \fm
\end{equation*}

\noindent
where the maps $\Phi_{X^{\sharp}}$ and $\Psi$ are defined respectively by 
\begin{equation*}
\Phi_{X^{\sharp}} : \G \lra T\G, \quad \Phi_{X^{\sharp}}\left(Q \right) = \left( Q , X^{\sharp}(Q) \right)
:= \left( Q , X(\pi_{0}(Q))^{\sharp}_{Q} \right)
\end{equation*}

\noindent
and
\begin{equation*}
\Psi : \cW_{0} \lra \V \times \fm, \quad \Psi(Q,v) = \left( \pi_{0}(Q) , p_{\pi_{0}(Q)}^{-1} \left( \cR_{Q^{-1}}(v) \right) \right).
\end{equation*}

\noindent
Firstly, the horizontal lift wrt $\pi_{0}$ of $X$ is the vector field $X^{\sharp}$ on $\G$ whose value at any $Q \in \G$ is $X^{\sharp}(Q):=X(\pi_{0}(Q))^{\sharp}_{Q}$, involved in the definition of the map $\Phi_{X^{\sharp}}$. Now, it is well-known that $X^{\sharp}$ is a smooth vector field on $\G$, which means that the map $\Phi_{X^{\sharp}}$ is \textit{smooth}. Secondly, the coefficients of the matrix $\cA(b)^{-1}$ involved in $(\ref{systemNN})$ are smooth functions of $b$. This implies readily that the map $\Psi$ is also \textit{smooth}, which proves that the map $\overline{X}$ is. 
\end{proof}

\subsubsection{Decomposition of vector fields on $\B$}

For $W \in \X(\B)$ and $b \in \V$, $\overline{W}(b) \in \fm$, so that 
\begin{equation}\label{decompXbar}
\overline{W}(b) = \sum\limits_{k=1}^{N} w_{k}(b) \epsilon_{k}, 
\quad \textrm{where }
w_{k}(b):=\langle \overline{W}(b), \epsilon_{k} \rangle.
\end{equation}

\noindent
For $b \in \V$, applying the linear map $d_{e}\pi_{b}$ to both sides of $(\ref{decompXbar})$, we obtain that 
\begin{equation}\label{decompXb}
W(b) = \sum\limits_{k=1}^{N} w_{k}(b) \left( (d_{e}\pi_{b})(\epsilon_{k}) \right) = \sum\limits_{k=1}^{N} w_{k}(b) \epsilon_{k}^{*}(b). 
\end{equation}

\noindent
By Proposition $\ref{smoothXbar}$, the function $\overline{W}$ is smooth on $\V$. So, for all $1 \leq k \leq n$, the coordinate function $w_{k} : \V \lra \R$ is \textit{smooth}. Now, the function $\overline{W}$ is valued in the \textit{fixed} vector space $\fm$. Thus, for all $b \in \V$ and $\Delta \in T_{b}\B$,
\begin{equation}\label{diffXbar}
(d_{b}\overline{W})(\Delta) = 
\sum\limits_{k=1}^{N} \left( (d_{b}w_{k})(\Delta) \right) \epsilon_{k}. 
\end{equation}

\noindent
The preceding results are summarized as follows, where the \textit{smoothness} is an important point. 

\begin{prop}\label{propDecompVF}
Any $W \in \X(\B)$ is decomposed on $\V$ into a linear combination of the $(\epsilon_{k}^{*})_{k}$, with smooth coefficients which are the coordinate functions of the $\fm$-lift function $\overline{W}$ in the basis $(\epsilon_{k})_{k}$. 
\end{prop}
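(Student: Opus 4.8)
The plan is to assemble Proposition \ref{propDecompVF} directly from the pieces already in place, since at this point the statement is essentially a summary. First I would fix $W \in \X(\B)$ and recall from Definition \ref{defMliftFunc} that its $\fm$-lift function $\overline{W} : \U \lra \fm$ is defined by $\overline{W}(b) = \omega_{b}(W(b))$, and that by Proposition \ref{smoothXbar} this function is smooth on the open neighborhood $\V = \U \cap \cV_{b_{0}}$ of $b_{0}$. Then, since $\overline{W}(b) \in \fm$ and $(\epsilon_{k})_{1 \leq k \leq N}$ is an orthonormal basis of $(\fm, \langle \cdot, \cdot \rangle)$, I would write $\overline{W}(b) = \sum_{k=1}^{N} w_{k}(b) \epsilon_{k}$ with $w_{k}(b) = \langle \overline{W}(b), \epsilon_{k} \rangle$ as in $(\ref{decompXbar})$; each $w_{k}$ is smooth on $\V$ because it is the composition of the smooth map $\overline{W}$ with the (linear, hence smooth) coordinate functional $\langle \cdot, \epsilon_{k} \rangle$ on $\fm$.

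Next I would push this decomposition forward through the orbital map. For $b \in \V$, applying the linear map $d_{e}\pi_{b} : \g \lra T_{b}\B$ to both sides of the identity $\overline{W}(b) = \sum_{k} w_{k}(b) \epsilon_{k}$ and using that $(d_{e}\pi_{b})(\overline{W}(b)) = (d_{e}\pi_{b})(\omega_{b}(W(b))) = W(b)$ by the definition of the $\fm$-lift, together with $(d_{e}\pi_{b})(\epsilon_{k}) = \epsilon_{k}^{*}(b)$ from $(\ref{defFVFonB})$, yields
\begin{equation*}
W(b) = \sum\limits_{k=1}^{N} w_{k}(b)\, \epsilon_{k}^{*}(b), \quad b \in \V,
\end{equation*}
which is exactly $(\ref{decompXb})$. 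This establishes that $W$ decomposes on $\V$ as a linear combination of the fundamental vector fields $\epsilon_{k}^{*}$ with coefficients $w_{k}$, and the previous paragraph already shows these coefficients are smooth and are the coordinate functions of $\overline{W}$ in the basis $(\epsilon_{k})_{k}$.

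There is no real obstacle here: the genuine work — the existence of the smooth local section $\sigma_{0}$ (Corollary \ref{corLocSection}), the openness of $\U$ (Lemma \ref{UisOpen}), the invertibility of $p_{b}$ and the formula for $\overline{X}$ (Proposition \ref{propMliftFunction} and Lemma \ref{inverseProjection}), and the smoothness of $\overline{X}$ on $\V$ (Proposition \ref{smoothXbar}) — has all been carried out in the preceding subsections. The only point deserving a word of care is that $\V$ is indeed an open neighborhood of $b_{0}$ (intersection of the open set $\U$ with $\cV_{b_{0}} = \B \setminus \Ct(b_{0})$, which is open since the cut locus is closed), so that "decomposed on $\V$" is meaningful; and that the smoothness assertion for the $w_{k}$ is a consequence of, not an addition to, Proposition \ref{smoothXbar}. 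So the proof is a short verification chaining these results, and I would write it as such.
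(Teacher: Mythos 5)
Your proposal is correct and follows exactly the paper's route: the proposition is stated as a summary, and the argument is precisely the chain you describe — write $\overline{W}(b)=\sum_k w_k(b)\epsilon_k$ as in $(\ref{decompXbar})$, invoke Proposition~$\ref{smoothXbar}$ for smoothness of the $w_k$, and apply $d_e\pi_b$ to obtain $(\ref{decompXb})$. The extra remark that $\cV_{b_0}$ is open because the cut locus is closed is a harmless clarification consistent with the paper.
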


\subsection{The Levi-Civita connection}\label{subsecLCCofB}

Our aim here is to compute the Levi-Civita connection $\left( \nabla^{\B}_{X} Y \right)_{\beta}$, for arbitrary $X, Y \in \X(\B)$ and $\beta \in \B$. Since $\G/\K$ is a normal homogeneous space, we recall that for any $u, v \in \fm$, 
\begin{equation}\label{LCCfvfGK}
\left( \nabla^{\G/\K}_{\widetilde{u}} \widetilde{v} \right)_{o} = - \frac{1}{2} \left( d_{e}\psi \right) \left( \lb u, v \rb \right).
\end{equation}

\noindent
We give hereunder the scheme for deriving $\left( \nabla^{\B}_{X} Y \right)_{\beta}$ from $\left( \nabla^{\G/\K}_{\widetilde{u}} \widetilde{v} \right)_{o}$. 
\[ \xymatrix{
  \left( \nabla^{\B}_{X} Y \right)_{\beta} \ar@{<=}[r]^{(3)} & \left( \nabla^{\B}_{Z} W \right)_{b_{0}} \ar@{<=}[r]^{(2)} & 
\left( \nabla^{\B}_{\epsilon_{k}^{*}} \epsilon_{\ell}^{*} \right)_{b_{0}} \ar@{<=}[r]^{(1)} & \left( \nabla^{\B}_{\widetilde{\epsilon_{k}}} \widetilde{\epsilon_{\ell}} \right)_{o} 
  }
\]

\noindent\\
Arrow $(1)$ provides $\left( \nabla^{\B}_{\epsilon_{k}^{*}} \epsilon_{\ell}^{*} \right)_{b_{0}}$ in function of $\left( \nabla^{\B}_{\widetilde{\epsilon_{k}}} \widetilde{\epsilon_{\ell}} \right)_{o}$, which is in turn given by $(\ref{LCCfvfGK})$: see paragraph $\ref{paragraphLCCfvfB}$. Arrow $(2)$ describes how to deduce the connection at $b_{0}$, for any vector fields on $\V$, from that for fundamental ones: see paragraph $\ref{LCCorigin}$. Arrow $(3)$ means that the computation at any $\beta \in \B$ is reduced to that at $b_{0}$: see paragraph $\ref{intrinsicLCC}$, where the vector fields $Z$ and $W$ are precised. The final expression of $\left( \nabla^{\B}_{X} Y \right)_{\beta}$ is obtained in Theorem $\ref{finalLCCapplied}$. A key tool is the compatibility of the Levi-Civita connection with isometries, stated hereafter. 

\begin{lem}\label{lemNaturalityLCC}
Let $\phi : \left(\M, g \right) \lra \left(\M', g' \right)$ be an isometry. Then, for all $X, Y \in \X(\M')$ and $p \in \M$,
\begin{equation}\label{equNaturalityLCC}
\left( \nabla^{\M'}_{X} Y \right)_{\phi(p)} = \left( d_{p}\phi \right) \left( \left( \nabla^{\M}_{(\phi^{-1})_{*}X} ~(\phi^{-1})_{*}Y \right)_{p} \right), 
\end{equation}

\noindent
where, for example, $(\phi^{-1})_{*}X \in \X(\M)$ is the pushforward of $X$ by $\phi^{-1}$ i.e., for any $p \in \M$, 
\begin{equation*}
\left( (\phi^{-1})_{*}X \right)_{p} = \left( d_{\phi(p)} \phi^{-1} \right) \left( X(\phi(p)) \right). 
\end{equation*}
\end{lem}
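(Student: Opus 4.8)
The plan is to prove the naturality of the Levi-Civita connection under isometries, i.e. equation~$(\ref{equNaturalityLCC})$, by reducing it to the uniqueness of the Levi-Civita connection. First I would note that since $\phi : (\M,g) \lra (\M',g')$ is an isometry, it is in particular a diffeomorphism, so the pushforward $\phi_{*} : \X(\M) \lra \X(\M')$ (and its inverse $(\phi^{-1})_{*}$) is a well-defined $\R$-linear bijection on vector fields, compatible with the Lie bracket and with the action on functions via $\phi_{*}(fX) = (f \circ \phi^{-1})\,\phi_{*}X$. The key idea is to define a candidate connection $\widetilde{\nabla}$ on $\M$ by pulling back $\nabla^{\M'}$:
\begin{equation*}
\widetilde{\nabla}_{X} Y := (\phi^{-1})_{*}\left( \nabla^{\M'}_{\phi_{*}X}\, \phi_{*}Y \right), \quad X, Y \in \X(\M),
\end{equation*}
and then to check that $\widetilde{\nabla}$ satisfies the defining axioms of the Levi-Civita connection of $(\M, g)$; by uniqueness, $\widetilde{\nabla} = \nabla^{\M}$, which is precisely $(\ref{equNaturalityLCC})$ after unwinding the pointwise statement.

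The verification proceeds axiom by axiom. Tensoriality in the first argument and the Leibniz rule in the second argument for $\widetilde{\nabla}$ follow formally from the corresponding properties of $\nabla^{\M'}$ together with the intertwining relations $\phi_{*}(fX) = (f\circ\phi^{-1})\phi_{*}X$ and $X(f) = \bigl((\phi_{*}X)(f\circ\phi^{-1})\bigr)\circ\phi$; these are short bookkeeping computations. For torsion-freeness, I would use that $\phi_{*}[X,Y]^{\M} = [\phi_{*}X, \phi_{*}Y]^{\M'}$ (naturality of the Lie bracket under diffeomorphisms) and that $\nabla^{\M'}$ is torsion-free, so that $\widetilde{\nabla}_{X}Y - \widetilde{\nabla}_{Y}X = (\phi^{-1})_{*}\bigl(\nabla^{\M'}_{\phi_{*}X}\phi_{*}Y - \nabla^{\M'}_{\phi_{*}Y}\phi_{*}X\bigr) = (\phi^{-1})_{*}[\phi_{*}X,\phi_{*}Y]^{\M'} = [X,Y]^{\M}$. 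For metric compatibility, I would use that $\phi$ being an isometry gives $g'(\phi_{*}X, \phi_{*}Y) = g(X,Y)\circ\phi^{-1}$, so that differentiating $g(X,Y)$ along $Z$ translates, via $\phi$, into differentiating $g'(\phi_{*}X,\phi_{*}Y)$ along $\phi_{*}Z$, and then metric compatibility of $\nabla^{\M'}$ closes the argument. Finally I would observe that evaluating the identity $\widetilde{\nabla}_{X}Y = \nabla^{\M}_{X}Y$ at a point and applying $d_{p}\phi$, together with the fact that $\bigl((\phi^{-1})_{*}V\bigr)_{p} = (d_{\phi(p)}\phi^{-1})(V(\phi(p)))$ for $V \in \X(\M')$, yields exactly $(\ref{equNaturalityLCC})$.

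There is no real obstacle here: the statement is a standard functoriality fact, and the only mild subtlety is being careful with the composition-with-$\phi^{\pm 1}$ factors when translating the function-level identities (for instance, whether a given scalar function lives on $\M$ or on $\M'$). The cleanest route really is the uniqueness argument above, since it avoids any local-coordinate computation; an alternative would be to verify $(\ref{equNaturalityLCC})$ directly via the Koszul formula on $(\M',g')$ and transport each term back through $\phi$, but that is more cumbersome and I would keep it only as a remark. I would also remark that a local version suffices for the paper's purposes — $\phi$ need only be a local isometry, or an isometry between open sets — since in Section~$\ref{sect3}$ the relevant isometries (translations $\cL^{\B}_{Q}$ and the submersion-induced maps) are applied on neighborhoods; but since the statement as given assumes a global isometry, the global form is what I would record.
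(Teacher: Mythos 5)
Your proof is correct. It proceeds by the uniqueness of the Levi-Civita connection: you define the pulled-back connection $\widetilde{\nabla}_{X} Y := (\phi^{-1})_{*}\bigl( \nabla^{\M'}_{\phi_{*}X}\, \phi_{*}Y \bigr)$ on $\M$, verify $C^{\infty}(\M)$-linearity in $X$, the Leibniz rule in $Y$, torsion-freeness (via naturality of the Lie bracket under the diffeomorphism $\phi$), and metric compatibility (via $g'(\phi_{*}X, \phi_{*}Y) = g(X,Y)\circ\phi^{-1}$, which is exactly the isometry hypothesis), and then conclude $\widetilde{\nabla} = \nabla^{\M}$ by uniqueness; the pointwise form of $(\ref{equNaturalityLCC})$ is immediate. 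The paper's own proof is the one-line remark ``This is a consequence of Koszul's formula'' — i.e.\ the very alternative you flag at the end and set aside as more cumbersome. In fact the Koszul route is about the same length: each of the six terms on the right-hand side of Koszul's formula, namely $Xg(Y,Z)$, $Yg(X,Z)$, $Zg(X,Y)$, $g([X,Y],Z)$, $g([X,Z],Y)$, $g([Y,Z],X)$, is preserved when one replaces $(\M,g,X,Y,Z)$ by $(\M',g',\phi_{*}X,\phi_{*}Y,\phi_{*}Z)$, because $\phi$ intertwines the metrics, the Lie brackets, and the directional derivatives of functions; hence $g(\nabla^{\M}_{X}Y,Z)$ is preserved as well, and nondegeneracy of $g$ finishes it. The two arguments are standard and essentially repackage the same facts — your axiom-checking version is arguably cleaner to typeset, while the Koszul version avoids enumerating the axioms separately — so either would serve the paper. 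Your side remark that a local isometry suffices, and that this matches how the lemma is actually invoked (for the translations $\cL^{\B}_{Q}$ and for $\widehat{\pi_{0}}$ in Section $\ref{sect3}$), is accurate.
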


\begin{proof}
This is a consequence of Koszul's formula.
\end{proof}

\subsubsection{Expression for fundamental vector fields}\label{paragraphLCCfvfB}

\begin{lem}\label{lemLCCfvfB}
For all $u, v \in \fm$, 
\begin{equation*}
\left( \nabla^{\B}_{u^{*}} v^{*} \right)_{b_{0}} = - \frac{1}{2} \left( d_{e}\pi_{0} \right) \left( \lb u, v \rb \right).
\end{equation*}
\end{lem}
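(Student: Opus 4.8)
The plan is to transport the known formula $(\ref{LCCfvfGK})$ for $\G/\K$ across the isometry $\widehat{\pi_{0}} : \G/\K \lra \B$ using Lemma $\ref{lemNaturalityLCC}$. Recall from $(\ref{linkFVFs})$ that the fundamental vector fields are related by $u^{*} = (\widehat{\pi_{0}})_{*}\widetilde{u}$ for every $u \in \g$, and that $\widehat{\pi_{0}}(o) = b_{0}$. Hence applying Lemma $\ref{lemNaturalityLCC}$ with $\phi = \widehat{\pi_{0}}$, $p = o$, $X = \widetilde{u}$, $Y = \widetilde{v}$ and noting that $(\widehat{\pi_{0}}^{-1})_{*}u^{*} = \widetilde{u}$, we obtain
\begin{equation*}
\left( \nabla^{\B}_{u^{*}} v^{*} \right)_{b_{0}} = \left( d_{o}\widehat{\pi_{0}} \right)\left( \left( \nabla^{\G/\K}_{\widetilde{u}} \widetilde{v} \right)_{o} \right).
\end{equation*}

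Next I would substitute $(\ref{LCCfvfGK})$ into the right-hand side, giving
\begin{equation*}
\left( \nabla^{\B}_{u^{*}} v^{*} \right)_{b_{0}} = \left( d_{o}\widehat{\pi_{0}} \right)\left( -\frac{1}{2}\left( d_{e}\psi \right)\left( \lb u, v \rb \right) \right) = -\frac{1}{2}\left( d_{o}\widehat{\pi_{0}} \circ d_{e}\psi \right)\left( \lb u, v \rb \right).
\end{equation*}
Then I would invoke the factorization $\pi_{0} = \widehat{\pi_{0}} \circ \psi$ from the setup of Section $\ref{sect3}$; differentiating at $e$ gives $d_{e}\pi_{0} = d_{o}\widehat{\pi_{0}} \circ d_{e}\psi$ by the chain rule. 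Substituting this identity yields exactly
\begin{equation*}
\left( \nabla^{\B}_{u^{*}} v^{*} \right)_{b_{0}} = -\frac{1}{2}\left( d_{e}\pi_{0} \right)\left( \lb u, v \rb \right),
\end{equation*}
which is the claim.

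There is essentially no serious obstacle here: the statement is a direct pushforward of the generic normal-homogeneous formula along the defining isometry, and the only things to check are the bookkeeping identities $u^{*} = (\widehat{\pi_{0}})_{*}\widetilde{u}$ (already established in $(\ref{linkFVFs})$) and $d_{e}\pi_{0} = d_{o}\widehat{\pi_{0}} \circ d_{e}\psi$ (chain rule on $\pi_{0} = \widehat{\pi_{0}} \circ \psi$). The mild subtlety worth a sentence is that Lemma $\ref{lemNaturalityLCC}$ requires the vector fields on the target to be pushed forward to genuine smooth vector fields on the source; since $\widehat{\pi_{0}}$ is a diffeomorphism this is automatic, so $(\widehat{\pi_{0}}^{-1})_{*}u^{*}$ is well defined and equals $\widetilde{u}$. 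This lemma then serves as the base case (arrow $(1)$ of the scheme) for the subsequent extension to arbitrary vector fields.
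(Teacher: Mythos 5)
Your proof is correct and follows exactly the paper's argument: apply Lemma \ref{lemNaturalityLCC} to the isometry $\widehat{\pi_{0}}$ using the identity $(\widehat{\pi_{0}}^{-1})_{*}u^{*}=\widetilde{u}$ from $(\ref{linkFVFs})$, substitute the normal homogeneous formula $(\ref{LCCfvfGK})$, and finish with the chain rule on $\pi_{0}=\widehat{\pi_{0}}\circ\psi$. There is no deviation from the paper's proof.
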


\begin{proof}
First , $(\ref{linkFVFs})$ implies that $\left( \widehat{\pi}^{-1} \right)_{*} u^{*}=\widetilde{u}$. So, by Lemma $\ref{lemNaturalityLCC}$ applied to the isometry $\widehat{\pi_{0}}$,
\begin{equation*}
\left( \nabla^{\B}_{u^{*}} v^{*} \right)_{b_{0}} = 
\left( d_{o}\widehat{\pi_{0}} \right) \left( \left( \nabla^{\G/\K}_{\widetilde{u}} \widetilde{v} \right)_{o} \right) = 
- \frac{1}{2}\left( d_{o}\widehat{\pi_{0}} \right) \left( \left( d_{e}\psi \right) \left( \lb u, v \rb \right) \right), 
\end{equation*}

\noindent
where the last equality follows from $(\ref{LCCfvfGK})$. Since $\widehat{\pi_{0}} \circ \psi = \pi_{0}$, we conclude by the chain rule. 
\end{proof}

\subsubsection{Expression at the origin for arbitrary vector fields}\label{LCCorigin}

Here, for any $Z, W \in \X(\B)$, we derive $\left( \nabla^{\B}_{Z} W \right)_{b_{0}}$ from $\left( \nabla^{\B}_{\epsilon_{k}^{*}} \epsilon_{\ell}^{*} \right)_{b_{0}}$. In fact, we obtain an \textit{intrinsic} expression of $\left( \nabla^{\B}_{Z} W \right)_{b_{0}}$, i.e. \textit{independent} of any basis of $\fm$. Namely, we decompose $Z$ and $W$ into combinations of the $\left( \epsilon_{k}^{*} \right)_{k}$ and we apply the Leibniz rule to compute $\left( \nabla^{\B}_{Z} W \right)_{b_{0}}$ from these decompositions. Then, we factorize the resulting expression, by identifying therein the right-hand sides of $(\ref{decompXbar})$ and $(\ref{diffXbar})$, which provides hereafter an \textit{intrinsic} formula for the connection at $b_{0}$.

\begin{prop}\label{propIntrinsicOrigin}
Let $Z, W \in \X(\B)$. Then, 
\begin{equation}\label{equIntrinsicOrigin}
\left( \nabla^{\B}_{Z} W \right)_{b_{0}} = (d_{e}\pi_{0}) \left( \left(d_{b_{0}}\overline{W}\right)(Z(b_{0})) - \frac{1}{2} \lb \overline{Z}(b_{0}) , \overline{W}(b_{0}) \rb \right). 
\end{equation}
\end{prop}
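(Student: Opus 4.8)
The plan is to expand $Z$ and $W$ in the basis of fundamental vector fields on $\V$ using Proposition \ref{propDecompVF}, apply the defining properties of an affine connection (Leibniz rule plus tensoriality in the first slot), and then recognize the resulting terms as the differential and bracket appearing in \eqref{equIntrinsicOrigin}. Concretely, write $W = \sum_{\ell} w_{\ell} \, \epsilon_{\ell}^{*}$ on $\V$ with $w_{\ell} = \langle \overline{W}, \epsilon_{\ell} \rangle$ smooth, and $Z(b_{0}) = \sum_{k} z_{k}(b_{0}) \, \epsilon_{k}^{*}(b_{0})$ with $z_{k}(b_{0}) = \langle \overline{Z}(b_{0}), \epsilon_{k} \rangle$. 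Since $\nabla^{\B}$ is $C^{\infty}(\B)$-linear and local in its first argument, and satisfies $\nabla_{X}(fY) = (Xf)\,Y + f\,\nabla_{X}Y$, I get
\begin{equation*}
\left( \nabla^{\B}_{Z} W \right)_{b_{0}} = \sum_{k,\ell} z_{k}(b_{0}) \Big[ \big( (d_{b_{0}} w_{\ell})(Z(b_{0})) \big) \, \epsilon_{\ell}^{*}(b_{0}) + w_{\ell}(b_{0}) \, \left( \nabla^{\B}_{\epsilon_{k}^{*}} \epsilon_{\ell}^{*} \right)_{b_{0}} \Big].
\end{equation*}
(Strictly, apply Leibniz to $\nabla_{Z}(w_{\ell}\epsilon_{\ell}^{*})$ first, then expand $Z$ at $b_{0}$ in the second summand; the first summand already only depends on $Z(b_{0})$.)

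Next I would substitute Lemma \ref{lemLCCfvfB}, namely $\left( \nabla^{\B}_{\epsilon_{k}^{*}} \epsilon_{\ell}^{*} \right)_{b_{0}} = -\tfrac12 (d_{e}\pi_{0})(\lb \epsilon_{k}, \epsilon_{\ell} \rb)$, and also use that $\epsilon_{\ell}^{*}(b_{0}) = (d_{e}\pi_{0})(\epsilon_{\ell})$. Pulling $d_{e}\pi_{0}$ out of both sums (it is linear), the first group of terms becomes $(d_{e}\pi_{0})\big( \sum_{\ell} \big( (d_{b_{0}} w_{\ell})(Z(b_{0})) \big)\, \epsilon_{\ell} \big)$, which by \eqref{diffXbar} is exactly $(d_{e}\pi_{0})\big( (d_{b_{0}}\overline{W})(Z(b_{0})) \big)$. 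The second group becomes $-\tfrac12 (d_{e}\pi_{0})\big( \sum_{k,\ell} z_{k}(b_{0}) w_{\ell}(b_{0}) \lb \epsilon_{k}, \epsilon_{\ell} \rb \big)$, and by bilinearity of the Lie bracket together with the expansions $\overline{Z}(b_{0}) = \sum_{k} z_{k}(b_{0}) \epsilon_{k}$, $\overline{W}(b_{0}) = \sum_{\ell} w_{\ell}(b_{0}) \epsilon_{\ell}$ from \eqref{decompXbar}, this is $-\tfrac12 (d_{e}\pi_{0})\big( \lb \overline{Z}(b_{0}), \overline{W}(b_{0}) \rb \big)$. Combining the two gives \eqref{equIntrinsicOrigin}.

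The main obstacle — and the reason the preceding subsections do the heavy lifting — is justifying that the formal manipulation is legitimate at the single point $b_{0}$: the connection term $\left( \nabla^{\B}_{Z} W \right)_{b_{0}}$ depends on $W$ in a neighborhood of $b_{0}$ (not just its value), so one genuinely needs the decomposition $W = \sum_{\ell} w_{\ell}\epsilon_{\ell}^{*}$ to hold on the whole open set $\V$ with \emph{smooth} coefficients $w_{\ell}$, which is precisely Proposition \ref{propDecompVF} (resting on Proposition \ref{smoothXbar}). A minor point to address is that the $\epsilon_{k}^{*}$ are global vector fields on $\B$ while the identity $W = \sum w_{\ell}\epsilon_{\ell}^{*}$ is only local; since both sides of the connection formula are local in $Z$ and $W$, restricting to $\V$ is harmless, but it should be stated. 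Finally, one should note that although the intermediate computation is written in a fixed orthonormal basis $(\epsilon_{k})_{k}$, the final right-hand side of \eqref{equIntrinsicOrigin} is manifestly basis-independent, which is the intrinsicness claim.
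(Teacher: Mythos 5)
Your proof is correct and follows the paper's argument step for step: restrict to $\V$, expand $W = \sum_\ell w_\ell \epsilon_\ell^*$ (and $Z$) via Proposition \ref{propDecompVF}, apply Leibniz, substitute Lemma \ref{lemLCCfvfB}, and repackage the two groups of terms using \eqref{diffXbar} and \eqref{decompXbar}. The only blemish is that your first displayed equation, read literally, distributes the factor $\sum_k z_k(b_0)$ over the derivative term as well, which would be wrong; but you flag and correct this in the parenthetical and in the subsequent computation, so the substance matches the paper exactly.
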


\begin{proof}
First, $\left(\nabla^{\B}_{Z} W \right)_{b_{0}}$ can be computed by restricting $Z$ and $W$ to $\V$. Then, on $\V$, by Proposition $\ref{propDecompVF}$, $Z = \sum\limits_{k=1}^{N} z_{k} \epsilon_{k}^{*}$ and $W = \sum\limits_{\ell=1}^{N} w_{\ell}\epsilon_{\ell}^{*}$ where $z_{k}$ and $w_{\ell}$ are \textit{smooth functions} on $\V$. Now, by the Leibniz rule, for all $b \in \V$,  
\begin{equation}\label{LeibnizRule}
\left( \nabla^{\B}_{Z} W \right)_{b} = \left( \sum\limits_{\ell=1}^{N} \left( Z \cdot w_{\ell} \right) \epsilon_{\ell}^{*} + w_{\ell} \left( \nabla^{\B}_{Z} \epsilon_{\ell}^{*} \right) \right) (b) = \cD_{b}(Z, W) + \cE_{b}(Z, W), 
\end{equation}

\noindent
where
\begin{equation}\label{defDandE}
\cD_{b}(Z, W) := \left( \sum\limits_{\ell=1}^{N} \left( Z \cdot w_{\ell} \right) \epsilon_{\ell}^{*} \right)(b) \qquad \textrm{and} \qquad
\cE_{b}(Z, W) := \left( \sum\limits_{\ell=1}^{N} w_{\ell} \left( \nabla^{\B}_{Z} \epsilon_{\ell}^{*} \right) \right)(b). 
\end{equation}

\noindent
On the one hand, we apply $(\ref{diffXbar})$ to $W = \sum\limits_{\ell=1}^{N} w_{\ell}\epsilon_{\ell}^{*}$ with $\Delta = Z(b)$, and we obtain that
\begin{align}
\cD_{b}(Z, W) &= \left(d_{e}\pi_{0} \right) \left[ \sum\limits_{\ell=1}^{N} 
\left[ (d_{b}w_{\ell})(Z(b)) \right] \epsilon_{\ell} \right] \label{DofLCCext} \\
&= \left(d_{e}\pi_{0} \right) \left[ (d_{b}\overline{W})(Z(b)) \right]. \label{DofLCCint}
\end{align}

\noindent
On the other hand, 
\begin{equation*}
\cE_{b}(Z, W) = \sum\limits_{k, \ell=1}^{N} z_{k}(b) w_{\ell}(b) \left( \nabla^{\B}_{\epsilon_{k}^{*}} \epsilon_{\ell}^{*} \right)_{b}. 
\end{equation*}

\noindent
In particular, for $b = b_{0}$, by Lemma $\ref{lemLCCfvfB}$,  
\begin{align}
\cE_{b_{0}}(Z, W) &= - \frac{1}{2} (d_{e}\pi_{0}) \left( \sum\limits_{k, \ell=1}^{N} z_{k}(b_{0}) w_{\ell}(b_{0}) \lb \epsilon_{k}, \epsilon_{\ell} \rb \right) \label{EofLCCext} \\
&= - \frac{1}{2} (d_{e}\pi_{0}) \left( \lb \overline{Z}(b_{0}) , \overline{W}(b_{0}) \rb \right). \label{EofLCCint}  
\end{align}

\noindent
Indeed, by $(\ref{decompXbar})$, $\overline{Z}(b_{0}) = \sum\limits_{k}^{N} z_{k}(b_{0}) \epsilon_{k}$ and $\overline{W}(b_{0}) = \sum\limits_{\ell=1}^{N} w_{\ell}(b_{0}) \epsilon_{\ell}$. Finally, we prove $(\ref{equIntrinsicOrigin})$ by combining $(\ref{defDandE})$, $(\ref{DofLCCint})$ and $(\ref{EofLCCint})$. 
\end{proof}

\subsubsection{Expression at any point for arbitrary vector fields}\label{intrinsicLCC}

Here, for any $\beta \in \B$, we derive $\left( \nabla^{\B}_{X} Y \right)_{\beta}$ from Proposition $\ref{propIntrinsicOrigin}$. Let $\V_{\beta}$ be the open neighborhood of $\beta$ defined by 
\begin{equation*}
\V_{\beta} := Q_{\beta} \cdot \V. 
\end{equation*}

\noindent
Let $X_{\beta}$ and $Y_{\beta}$ be the vector fields on $\V$ which are the pushforwards of $X_{|\V_{\beta}}$ and $Y_{|\V_{\beta}}$ by the \textit{isometry} $\cL^{\B}_{Q_{\beta}^{-1}}$ i.e. 
\begin{equation*}
X_{\beta}(b) = \left(Q_{\beta}^{-1}\right) \cdot \left[ X(Q_{\beta} \cdot b) \right]
\quad \textrm{and} \quad
Y_{\beta}(b) = \left(Q_{\beta}^{-1}\right) \cdot \left[ Y(Q_{\beta} \cdot b) \right]
, \quad b \in \V.   
\end{equation*}

\noindent
Now, we introduce hereafter the maps $V_{\beta}$ and $\phi_{\beta}$ and the vector $U^{\beta}_{0} \in \g$ which are elements involved in the expression of $\left( \nabla^{\B}_{X} Y \right)_{\beta}$ in $(\ref{LCCofBusable})$ below. First, the map $V_{\beta} : \V \lra \g$ is defined by 
\begin{equation*}
V_{\beta}(b) := \cR_{Q_{b}^{-1}}\left[ \left(Y_{\beta}(b) \right)^{\sharp}_{Q_{b}} \right] \in \fm_{b}. 
\end{equation*}

\noindent
Thus, $V_{\beta}(b)$ is the $\fm_{b}$-lift of $Y_{\beta}(b)$. So, Proposition $\ref{propMliftFunction}$ implies that for all $b \in \V$, 
\begin{equation*}
\overline{Y_{\beta}}(b) = p_{b}^{-1}\left( V_{\beta}(b) \right). 
\end{equation*}

\noindent
Then, we define the vector $U^{\beta}_{0} \in \g$ by 
\begin{equation*}
U^{\beta}_{0} := \lb \overline{X_{\beta}}(b_{0}) , \overline{Y_{\beta}}(b_{0}) \rb. 
\end{equation*}

\noindent
Finally, we consider the linear map $\phi_{\beta} : \g \lra T_{\beta}\B$ defined by 
\begin{equation*}
\phi_{\beta}(u) = Q_{\beta} \cdot \left[ \left( d_{e}\pi_{0} \right) (u) \right]. 
\end{equation*}

\noindent
The proof of Theorem $\ref{finalLCCapplied}$ hereafter uses Proposition $\ref{PropDYbar}$ below, as an auxiliary result.

\begin{theo}\label{finalLCCapplied}
$(i)$ Let $X, Y \in \X(\B)$ and $\beta \in \B$. Then, 
\begin{equation}\label{LCCofBusable}
\left( \nabla^{\B}_{X} Y \right)_{\beta} = \left( d_{b_{0}} \left( \phi_{\beta} \circ V_{\beta} \right) \right)\left( X_{\beta}(b_{0}) \right) - \frac{1}{2} \phi_{\beta}\left( U^{\beta}_{0} \right) = \left( \at{\frac{d}{dt}}{t=0} \left( \phi_{\beta} \circ V_{\beta} \right)(\gamma(t)) \right) - \frac{1}{2} \phi_{\beta}\left( U^{\beta}_{0} \right), 
\end{equation}

\noindent
where $\gamma$ is the geodesic in $\B$ through $b_{0}$ in the direction $X_{\beta}(b_{0})$, given by $(\ref{geodesicB})$.

\noindent\\
$(ii)$ If the Bracket Condition $(\ref{BracketCondB})$ holds, then $U^{\beta}_{0} \in \ker(d_{e}\pi_{0})$, i.e. the second term of $(\ref{LCCofBusable})$ vanishes. 
\end{theo}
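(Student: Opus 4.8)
The plan is to reduce the computation at an arbitrary point $\beta$ to the computation at $b_0$ already carried out in Proposition~\ref{propIntrinsicOrigin}, by conjugating with the isometry $\cL^{\B}_{Q_\beta}$. First I would apply Lemma~\ref{lemNaturalityLCC} to the isometry $\phi:=\cL^{\B}_{Q_\beta} : \B \lra \B$ at the point $p=b_0$ (so that $\phi(b_0)=\beta$), obtaining
\begin{equation*}
\left( \nabla^{\B}_{X} Y \right)_{\beta} = \left( d_{b_0}\cL^{\B}_{Q_\beta} \right)\left( \left( \nabla^{\B}_{(\cL^{\B}_{Q_\beta^{-1}})_{*}X}~(\cL^{\B}_{Q_\beta^{-1}})_{*}Y \right)_{b_0} \right).
\end{equation*}
By construction $(\cL^{\B}_{Q_\beta^{-1}})_{*}X$ and $(\cL^{\B}_{Q_\beta^{-1}})_{*}Y$ restrict on $\V$ to $X_\beta$ and $Y_\beta$, and the connection term on the right only depends on these restrictions; so I may substitute $Z=X_\beta$, $W=Y_\beta$ into Proposition~\ref{propIntrinsicOrigin}. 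This gives
\begin{equation*}
\left( \nabla^{\B}_{X} Y \right)_{\beta} = \left( d_{b_0}\cL^{\B}_{Q_\beta} \right)\left[ (d_{e}\pi_0)\left( (d_{b_0}\overline{Y_\beta})(X_\beta(b_0)) - \tfrac{1}{2}\lb \overline{X_\beta}(b_0),\overline{Y_\beta}(b_0)\rb \right) \right].
\end{equation*}

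Next I would push the outer isometry differential through. Since $\phi_\beta = d_{b_0}\cL^{\B}_{Q_\beta} \circ d_e\pi_0$ by definition of $\phi_\beta$, linearity gives
\begin{equation*}
\left( \nabla^{\B}_{X} Y \right)_{\beta} = \phi_\beta\!\left( (d_{b_0}\overline{Y_\beta})(X_\beta(b_0)) \right) - \tfrac{1}{2}\phi_\beta(U^\beta_0),
\end{equation*}
using that $U^\beta_0 = \lb \overline{X_\beta}(b_0),\overline{Y_\beta}(b_0)\rb$. It remains to identify the first term with $\left( d_{b_0}(\phi_\beta\circ V_\beta)\right)(X_\beta(b_0))$. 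Here I invoke Proposition~\ref{propMliftFunction} (and the remark following it) which states $\overline{Y_\beta}(b) = p_b^{-1}(V_\beta(b))$ for $b\in\V$; so $\overline{Y_\beta} = p_\bullet^{-1}\circ V_\beta$ as $\fm$-valued maps on $\V$. The point is that $\phi_\beta$ is a \emph{fixed} linear map and $V_\beta$ is $\g$-valued, so differentiating the composite $\phi_\beta\circ V_\beta$ and differentiating $\overline{Y_\beta}$ and then applying $\phi_\beta\circ d_e\pi_0^{-1}$-style identifications must be reconciled: concretely, $(d_e\pi_0)\circ\overline{Y_\beta} = (d_e\pi_0)\circ p_\bullet^{-1}\circ V_\beta$, and since $(d_e\pi_0)|_{\fm_b}\circ$ (inclusion)$\ = (d_e\pi_0)|_{\fm}\circ p_b^{-1}$ as maps from $\fm_b$, one checks $(d_e\pi_0)(\overline{Y_\beta}(b)) = (d_e\pi_0)(V_\beta(b))$ for every $b\in\V$ — equivalently $\pi_0$ kills the $\fk_b$-correction. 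Hence $\phi_\beta\circ\overline{Y_\beta}$ and $\phi_\beta\circ V_\beta$ agree as maps $\V\lra T\B$, and differentiating at $b_0$ in the direction $X_\beta(b_0)$ yields the claimed first form; the second (dynamical) form of $(\ref{LCCofBusable})$ then follows because $t\mapsto\gamma(t)$ with $\gamma'(0)=X_\beta(b_0)$ satisfies $(d_{b_0}F)(X_\beta(b_0)) = \frac{d}{dt}\big|_{t=0}F(\gamma(t))$ for any smooth $F$, and the explicit geodesic is $(\ref{geodesicB})$.

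For part $(ii)$: if $\lb\fm,\fm\rb\subset\fk$, then since $\overline{X_\beta}(b_0),\overline{Y_\beta}(b_0)\in\fm$ we get $U^\beta_0 = \lb\overline{X_\beta}(b_0),\overline{Y_\beta}(b_0)\rb\in\fk = \ker(d_e\pi_0)$, so $\phi_\beta(U^\beta_0)=Q_\beta\cdot[(d_e\pi_0)(U^\beta_0)] = Q_\beta\cdot[0] = 0$.

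I expect the main obstacle to be step three: cleanly justifying that $\phi_\beta\circ\overline{Y_\beta} = \phi_\beta\circ V_\beta$ on all of $\V$ (not merely at $b_0$), which is what legitimizes differentiating $\phi_\beta\circ V_\beta$ in place of $\phi_\beta\circ\overline{Y_\beta}$. This rests on the identity $(d_e\pi_0)\circ p_b^{-1} = (d_e\pi_0)|_{\fm_b}$ coming from $\fk_b=\ker(d_e\pi_b)$ together with $d_e\pi_0 = d_e\pi_b$ (which in turn uses $\pi_b=\cL^{\B}_{Q_b}\circ\pi_0\circ J_{Q_b^{-1}}$, i.e. $(\ref{PIBandJ})$, evaluated with the relevant base points) — a short but slightly delicate bookkeeping of which base points and which orbital maps are in play. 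Everything else is the Leibniz rule, the chain rule, and naturality of $\nabla$ under isometries, all already available.
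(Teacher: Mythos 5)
Your outline matches the paper's proof for the reduction steps (the isometry $\cL^{\B}_{Q_\beta}$, Lemma~\ref{lemNaturalityLCC}, Proposition~\ref{propIntrinsicOrigin}, linearity of $\phi_\beta$, and part $(ii)$), but the crucial step where you replace $d_{b_0}\overline{Y_\beta}$ by $d_{b_0}V_\beta$ contains a genuine error, precisely at the place you flagged as the main obstacle.

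You claim $(d_e\pi_0)\circ\overline{Y_\beta}=(d_e\pi_0)\circ V_\beta$ on all of $\V$ because ``$\pi_0$ kills the $\fk_b$-correction,'' supported by an asserted identity $d_e\pi_0 = d_e\pi_b$. Neither piece holds. First, $\pi_0 = \pi_{b_0}$ and $\pi_b$ are different orbital maps: $d_e\pi_0:\g\to T_{b_0}\B$ while $d_e\pi_b:\g\to T_b\B$, so they cannot coincide unless $b=b_0$. Second, from $\overline{Y_\beta}(b) = p_b^{-1}(V_\beta(b))$ one only gets
\begin{equation*}
\overline{Y_\beta}(b) - V_\beta(b) \in \fk_b = \ker(d_e\pi_b),
\end{equation*}
and $\fk_b\neq\fk=\ker(d_e\pi_0)$ for $b\neq b_0$. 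So $d_e\pi_0$ does \emph{not} kill the correction, and the maps $\phi_\beta\circ\overline{Y_\beta}$ and $\phi_\beta\circ V_\beta$ agree only at $b_0$, not on a neighborhood. This is why Proposition~\ref{PropDYbar} — which the paper proves by a nontrivial computation in the Appendix — is stated as an equality of derivatives $d_{b_0}\overline{Y_\beta}=d_{b_0}V_\beta$, not of functions. Its proof writes $\overline{Y_\beta}(b)$ via Lemma~\ref{inverseProjection} and differentiates at $b_0$, producing three terms $w^1,w^2,w^3$; one checks using $\cA(b_0)=I_N$, $(\ref{adBracket})$ and the $\Ad(\G)$-invariance $(\ref{AdInvariance})$ that $w^1$ and $w^3$ cancel each other, leaving exactly $d_{b_0}V_\beta$. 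That cancellation is the real content and cannot be replaced by the pointwise identity you proposed. So the proof, as written, has a gap at this step; the rest of the argument is sound and follows the paper.
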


\begin{proof}
We can compute $\left( \nabla^{\B}_{X} Y \right)_{\beta}$ by restricting $X$ and $Y$ to $\V_{\beta}$. By Lemma $\ref{lemNaturalityLCC}$ applied with the isometry $\cL^{\B}_{Q_{\beta}}$, 
\begin{equation}\label{LCCoriginToAny}
\left( \nabla^{\B}_{X} Y \right)_{\beta} = Q_{\beta} \cdot \left[ \left( \nabla^{\B}_{X_{\beta}} Y_{\beta} \right)_{b_{0}} \right]. 
\end{equation}

\noindent
By combining $(\ref{LCCoriginToAny})$ with Proposition $\ref{propIntrinsicOrigin}$ applied to $Z=X_{\beta}$ and $W=Y_{\beta}$, we obtain that 
\begin{align}\label{intrinsicGenLCC}
\left( \nabla^{\B}_{X} Y \right)_{\beta} &= Q_{\beta} \cdot \left[ (d_{e}\pi_{0}) \left( \left(d_{b_{0}}\overline{Y_{\beta}}\right) \left(X_{\beta}(b_{0}) \right) - \frac{1}{2} \lb \overline{X_{\beta}}(b_{0}) , \overline{Y_{\beta}}(b_{0}) \rb \right) \right] \\
&= \phi_{\beta} \left( \left(d_{b_{0}}\overline{Y_{\beta}}\right) \left(X_{\beta}(b_{0}) \right) - \frac{1}{2} U^{\beta}_{0} \right) \\
&= \phi_{\beta} \left( \left(d_{b_{0}}V_{\beta}\right) \left(X_{\beta}(b_{0}) \right) - \frac{1}{2} U^{\beta}_{0} \right), 
\end{align}

\noindent
where the last equality follows from Proposition $\ref{PropDYbar}$ below. Finally, since $\phi_{\beta}$ is a linear map,
\begin{equation*}
\left( \nabla^{\B}_{X} Y \right)_{\beta} = 
\left( d_{b_{0}} \left( \phi_{\beta} \circ V_{\beta} \right) \right)\left( X_{\beta}(b_{0}) \right) - \frac{1}{2} \phi_{\beta}\left( U^{\beta}_{0} \right). 
\end{equation*}

\end{proof}

\begin{prop}\label{PropDYbar}
The map $V_{\beta} : \V \lra \g$ is smooth and for any $\Delta \in T_{b_{0}}\B$,
\begin{equation}\label{EqDYbar}
\left(d_{b_{0}}\overline{Y_{\beta}}\right)(\Delta) = \left( d_{b_{0}}V_{\beta} \right) \left( \Delta \right).
\end{equation}

\end{prop}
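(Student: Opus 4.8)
The plan is to show smoothness of $V_\beta$ first, and then to derive the identity $(\ref{EqDYbar})$ by relating $\overline{Y_\beta}$ and $V_\beta$ through the projection maps $p_b$. For smoothness: by definition $V_\beta(b) = \cR_{Q_b^{-1}}\left[ \left( Y_\beta(b) \right)^\sharp_{Q_b} \right]$, and by $(\ref{DeltaHofB})$ in Proposition $\ref{propMliftFunction}$ this equals the $\fm_b$-lift $Y_\beta(b)^{[b]}$. I would write $V_\beta$ as a composite exactly as in the proof of Proposition $\ref{smoothXbar}$: the map $b \mapsto \sigma_0(b) = Q_b$ is smooth on $\V$ by Corollary $\ref{corLocSection}$ and $(\ref{QbEqualSig})$; the horizontal lift $Q \mapsto \left( Q, Y_\beta^\sharp(Q) \right)$ of the smooth vector field $Y_\beta$ is smooth since $\pi_0$ is a Riemannian submersion; and right-translation $\cR_{Q_b^{-1}}$ depends smoothly on $Q_b$. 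Composing these yields that $V_\beta : \V \lra \g$ is smooth. (Note that $Y_\beta$ is smooth on $\V$ because it is the pushforward of the smooth vector field $Y|_{\V_\beta}$ by the isometry $\cL^\B_{Q_\beta^{-1}}$.)

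For the differential identity, the key relation is the one established inside the proof of Proposition $\ref{propMliftFunction}$: for every $b \in \U$ and every $\Delta \in T_b\B$, one has $\omega_b(\Delta) = p_b^{-1}\left( \Delta^{[b]} \right)$. Applied to $X = Y_\beta$ this gives $\overline{Y_\beta}(b) = p_b^{-1}\left( V_\beta(b) \right)$ for all $b \in \V$, i.e. $p_b\bigl( \overline{Y_\beta}(b) \bigr) = V_\beta(b)$. The difficulty is that $p_b$ varies with $b$, so one cannot simply differentiate through it; the honest route is to work at the single point $b_0$, where $p_{b_0} = \Id$. Indeed, by $(ii)$ of Lemma $\ref{MBandMH}$ and $Q_{b_0} = \sigma_0(b_0) = e$ we have $\fm_{b_0} = \fm$ and $p_{b_0} = \Id_\fm$; equivalently, $\Delta^{[b_0]} = \omega_{b_0}(\Delta) = \Delta^\sharp_e$ for $\Delta \in T_{b_0}\B$, so $V_\beta(b_0) = \overline{Y_\beta}(b_0)$.

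To get the equality of differentials at $b_0$ rather than just the values, I would compute $(d_{b_0}\overline{Y_\beta})(\Delta)$ by differentiating the identity $\overline{Y_\beta}(b) = p_b^{-1}\bigl(V_\beta(b)\bigr)$ and observing that the error term involves $\bigl(d_{b_0} p_b^{-1}\bigr)(\Delta)$ applied to $V_\beta(b_0) \in \fm_{b_0} = \fm$. Concretely, write $\overline{Y_\beta}(b) = p_b^{-1}(V_\beta(b))$ and differentiate as a product of the $b$-dependent linear map $p_b^{-1}$ and the vector $V_\beta(b)$:
\begin{equation*}
(d_{b_0}\overline{Y_\beta})(\Delta) = \left( d_{b_0}\bigl( b \mapsto p_b^{-1} \bigr) \right)(\Delta)\bigl( V_\beta(b_0) \bigr) + p_{b_0}^{-1}\bigl( (d_{b_0}V_\beta)(\Delta) \bigr).
\end{equation*}
Since $p_{b_0}^{-1} = \Id$, the second term is exactly $(d_{b_0}V_\beta)(\Delta)$, so $(\ref{EqDYbar})$ will follow once I show the first term vanishes. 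That is the main obstacle, and I expect to handle it as follows: using the matrix description $(\ref{defMatrixA})$–$(\ref{systemNN})$, the map $b \mapsto p_b^{-1}$ in the fixed bases $(\epsilon_k)_k$ is $b \mapsto \cA(b)^{-1}$, and its image $V_\beta(b_0) = \overline{Y_\beta}(b_0) \in \fm$ is written in the basis $(\epsilon_k)_k$; differentiating $p_b^{-1} \circ p_b = \Id$ at $b_0$ gives $\bigl(d_{b_0}(p_\bullet^{-1})\bigr)(\Delta) = -\,\bigl(d_{b_0}(p_\bullet)\bigr)(\Delta)$ as endomorphisms of $\fm$ (again because $p_{b_0} = \Id$), so it suffices that $\bigl(d_{b_0}(p_\bullet)\bigr)(\Delta)$ kills $V_\beta(b_0)$. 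Now $p_b(u)$ is the orthogonal projection of $u \in \fm$ onto $\fm_b = \Ad(Q_b)(\fm)$, and at $b_0$ the subspaces $\fm_b$ vary first-order along directions in $\fk$ (since $Q_{b_0} = e$ and $\ad$-derivatives of $\Ad(Q_b)$ move $\fm$ into $[\fk,\fm] \subset$ generically $\fk \oplus \fm$), so that the infinitesimal variation of the projection, applied to a vector already in $\fm = \fm_{b_0}$, produces a vector orthogonal to $\fm$; projecting back into $\fm$ in the expression for $p_b$ then annihilates it at first order. Assembling these pieces gives $\bigl(d_{b_0}(p_\bullet^{-1})\bigr)(\Delta)\bigl(V_\beta(b_0)\bigr) = 0$, hence $(\ref{EqDYbar})$.
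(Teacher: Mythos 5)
Your smoothness argument for $V_\beta$ is essentially the paper's (decompose as a composition of the smooth section $\sigma_0$, the horizontal-lift assignment, and right translation), and your choice to differentiate $\overline{Y_\beta}(b) = p_b^{-1}(V_\beta(b))$ at $b_0$, exploiting $p_{b_0}=\Id_\fm$, is the correct overall structure — it matches the paper conceptually. The difference is that the paper expands $p_b^{-1}(V_\beta(b))$ in the orthonormal bases $(\epsilon_k)_k$ and $(\epsilon'_\ell(b))_\ell$ and gets a \emph{three}-term product rule (derivative of $\cA(b)^{-1}$, of $V_\beta(b)$, and of the basis $\epsilon'_\ell(b)$ — the terms $w^1,w^2,w^3$ in the Appendix), and then shows by direct computation that $w^1+w^3=0$ using the $\Ad$-invariance $(\ref{AdInvariance})$. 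Your ``first term'' $\bigl(d_{b_0}(p_\bullet^{-1})\bigr)(\Delta)(V_\beta(b_0))$ corresponds to $w^1+w^3$, so your plan is to prove the same cancellation, but you argue it heuristically rather than computing it.

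The heuristic step is where there is a genuine gap, for two reasons. First, the justification you give is factually wrong: you claim that ``at $b_0$ the subspaces $\fm_b$ vary first-order along directions in $\fk$,'' but $\fm_b=\Ad(Q_b)(\fm)$ with $Q_{b_0}=e$ and $(d_{b_0}\sigma_0)(\Delta)=\Delta^\sharp_e\in\fm$ (the section is horizontal), so the first-order variation of $\fm_b$ lies in $\ad\bigl((d_{b_0}\sigma_0)(\Delta)\bigr)(\fm)\subset\lb\fm,\fm\rb$, which is \emph{not} contained in $\fk$ unless the Bracket Condition holds. Second, the statement ``$\bigl(d_{b_0}(p_\bullet^{-1})\bigr)(\Delta)(V_\beta(b_0))=0$'' is not even well-posed as written, because $p_b^{-1}:\fm_b\to\fm$ has a $b$-dependent domain, so $b\mapsto p_b^{-1}$ does not live in a fixed space of linear maps and cannot be differentiated without first extending it (and $(d_{b_0}V_\beta)(\Delta)$ need not lie in $\fm=\fm_{b_0}$, so applying $p_{b_0}^{-1}$ to it is not defined either). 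You never invoke the $\Ad$-invariance $(\ref{AdInvariance})$ of the inner product, yet that is the mechanism behind the cancellation in the paper, which is a strong sign that your argument is missing an essential ingredient. To fix this one either does the coordinate computation of the paper, or uses the cleaner observation that $f(b):=V_\beta(b)-\overline{Y_\beta}(b)$ lies in $\fk_b$ for all $b$ (from the proof of Proposition $\ref{propMliftFunction}$) and satisfies $f(b_0)=0$, whence $(d_{b_0}f)(\Delta)\in\fk_{b_0}=\fk$ by differentiating $f(b)=\mathrm{pr}_{\fk_b}(f(b))$; combined with $(d_{b_0}\overline{Y_\beta})(\Delta)\in\fm$ this gives the conclusion (up to the harmless $\fk$-component that $\phi_\beta$ annihilates anyway).
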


\begin{proof}
See Appendix. 
\end{proof}

\subsection{The sectional curvature}

The following Lemma states that the sectional curvature is invariant under any isometry.
  
\begin{lem}\label{lemNaturalityCurva}
Let $\phi : \left(\M, g \right) \lra \left(\M', g' \right)$ be an isometry. Then, for all $Z, T \in \X(\M')$,
\begin{equation}\label{eqNaturalityCurva}
K^{\M'}_{\phi(p)}(Z, T) = K^{\M}_{p}\left( \left(\phi^{-1}\right)_{*}Z , \left(\phi^{-1}\right)_{*}T \right),
\end{equation}

\noindent
where $K^{\M}$ and $K^{\M'}$ denote the sectional curvatures on $\M$ and $\M'$.
\end{lem}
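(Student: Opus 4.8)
The plan is to establish \eqref{eqNaturalityCurva} as a direct consequence of the fact that the curvature tensor is a natural object with respect to isometries, which in turn follows from the naturality of the Levi-Civita connection already recorded in Lemma \ref{lemNaturalityLCC}. First I would recall that for a Riemannian manifold $(\M, g)$ the Riemann curvature tensor is defined by $R^{\M}(Z, T)S = \nabla^{\M}_{Z}\nabla^{\M}_{T} S - \nabla^{\M}_{T}\nabla^{\M}_{Z} S - \nabla^{\M}_{[Z,T]^{\M}} S$, and that the sectional curvature of a $2$-plane spanned by linearly independent $Z(p), T(p) \in T_{p}\M$ is
\begin{equation*}
K^{\M}_{p}(Z, T) = \frac{g_{p}\bigl( R^{\M}(Z, T)T, Z \bigr)}{g_{p}(Z,Z)\, g_{p}(T,T) - g_{p}(Z,T)^{2}}.
\end{equation*}

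The key step is then to show that an isometry $\phi : (\M, g) \lra (\M', g')$ intertwines the two curvature tensors, i.e. for all $Z, T, S \in \X(\M')$ and $p \in \M$,
\begin{equation}\label{curvatureIntertwine}
R^{\M'}(Z, T)S\bigl(\phi(p)\bigr) = \left( d_{p}\phi \right) \left( R^{\M}\bigl( (\phi^{-1})_{*}Z, (\phi^{-1})_{*}T \bigr)\, (\phi^{-1})_{*}S \; (p) \right).
\end{equation}
To prove \eqref{curvatureIntertwine}, I would write $\widetilde{Z} = (\phi^{-1})_{*}Z$, etc., note that pushforward by a diffeomorphism is an isomorphism of Lie algebras of vector fields so that $(\phi^{-1})_{*}[Z,T]^{\M'} = [\widetilde{Z}, \widetilde{T}]^{\M}$, and then apply Lemma \ref{lemNaturalityLCC} twice to each nested covariant derivative appearing in the definition of $R^{\M'}$; the three terms reassemble under $d\phi$ into $R^{\M}(\widetilde{Z}, \widetilde{T})\widetilde{S}$ evaluated at $p$, using that $\phi$ is a diffeomorphism so the base-point substitution $\phi(p)$ is handled consistently. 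Finally, since $\phi$ is an isometry, $g'_{\phi(p)}(d_{p}\phi(\cdot), d_{p}\phi(\cdot)) = g_{p}(\cdot, \cdot)$; plugging \eqref{curvatureIntertwine} into the sectional-curvature quotient above, every occurrence of $d_{p}\phi$ is absorbed by an inner product, the numerator becomes $g_{p}(R^{\M}(\widetilde{Z},\widetilde{T})\widetilde{T}, \widetilde{Z})$ and the denominator becomes the Gram determinant of $\widetilde{Z}(p), \widetilde{T}(p)$, which is exactly $K^{\M}_{p}(\widetilde{Z}, \widetilde{T})$.

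I do not expect a serious obstacle here: the statement is a standard fact and everything needed has been prepared. The only mild care point is bookkeeping with base points — one must be careful that in Lemma \ref{lemNaturalityLCC} the connection $\nabla^{\M'}_{X}Y$ is evaluated at $\phi(p)$ while $\nabla^{\M}_{(\phi^{-1})_{*}X}(\phi^{-1})_{*}Y$ is evaluated at $p$, and that applying the lemma to $\nabla^{\M'}_{Z}(\nabla^{\M'}_{T}S)$ requires first recognizing $(\phi^{-1})_{*}(\nabla^{\M'}_{T}S) = \nabla^{\M}_{\widetilde{T}}\widetilde{S}$ as a vector field on all of $\M$ before taking $\nabla^{\M}_{\widetilde{Z}}$ of it. Alternatively, one can shortcut the entire argument by invoking that the curvature tensor, being built canonically from $g$ via the Levi-Civita connection, is preserved by isometries (e.g. \cite{O'Neill 1983}), and then the sectional curvature identity follows immediately from the definition together with the isometry property of $\phi$ on inner products. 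I would present the pushforward/Koszul argument as the proof and mention the reference as an alternative.
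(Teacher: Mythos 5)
Your proposal is correct and is precisely the fleshed-out version of the paper's one-line proof, which simply cites Lemma \ref{lemNaturalityLCC}: you apply that lemma to the nested covariant derivatives in the curvature tensor, obtain the intertwining relation under $d_p\phi$, and then use that $\phi$ preserves inner products to pass to the sectional-curvature quotient. The only caveat you flag — keeping track of base points and recognizing $(\phi^{-1})_{*}(\nabla^{\M'}_{T}S)=\nabla^{\M}_{\widetilde{T}}\widetilde{S}$ globally before taking the outer derivative — is exactly the bookkeeping one must carry out, and you handle it correctly.
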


\begin{proof}
This follows readily from Lemma \ref{lemNaturalityLCC}. 
\end{proof}

\noindent
Since $\G / \K$ is a normal homogeneous space, we recall that 
\begin{equation}\label{recallCurvaNormal}
K_{o}(\widetilde{u} , \widetilde{v}) = \left\| \lb u,v \rb_{\fk} \right\|^{2} + \frac{1}{4}\left\| \lb u,v \rb_{\fm} \right\|^{2}. 
\end{equation}

\noindent
We deduce hereafter the sectional curvature at any point of $\B$ and for any vector field on $\B$.

\begin{prop}\label{propFinalCurva}
$(i)$ Let $X, Y \in \X(\B)$. For any $\beta \in \B$, consider $U^{\beta}_{0} \in \g$ defined in $(\ref{U0beta})$ hereabove. Then, 
\begin{equation}\label{curvaB}
K^{\B}_{\beta}\left(X, Y\right) = \left\| \left(U^{\beta}_{0} \right)_{\fk} \right\|^{2} + \frac{1}{4} \left\| \left(U^{\beta}_{0} \right)_{\fm} \right\|^{2}.
\end{equation}

\noindent\\
$(ii)$ If the Bracket Condition $(\ref{BracketCondB})$ holds, then $U^{\beta}_{0} \in \fk$. So, 
\begin{equation}\label{curvaBBracket}
K^{\B}_{\beta}\left(X, Y\right) = \left\| U^{\beta}_{0} \right\|^{2}.
\end{equation}

\end{prop}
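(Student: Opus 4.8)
The plan is to compute $K^{\B}_{\beta}(X,Y)$ by transporting it, through a chain of isometries, to the curvature of the normal homogeneous space $\G/\K$ at its origin $o$, where formula (\ref{recallCurvaNormal}) applies; the two isometries are $\cL^{\B}_{Q_{\beta}}$, used to bring $\beta$ back to $b_{0}$, and $\widehat{\pi_{0}}:\G/\K\lra\B$. The one extra ingredient is that the sectional curvature is a \emph{tensor}: $K^{\B}_{b_{0}}(X,Y)$ depends only on $X(b_{0})$ and $Y(b_{0})$, which lets us replace arbitrary fields by fundamental ones agreeing with them at $b_{0}$.

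First I would apply Lemma \ref{lemNaturalityCurva} to the isometry $\phi=\cL^{\B}_{Q_{\beta}}$, which sends $b_{0}$ to $\beta$. Since $\left(\cL^{\B}_{Q_{\beta}}\right)^{-1}=\cL^{\B}_{Q_{\beta}^{-1}}$ and the pushforward of $X$ (resp. $Y$) by $\cL^{\B}_{Q_{\beta}^{-1}}$ is exactly the vector field $X_{\beta}$ (resp. $Y_{\beta}$) on $\V$ introduced before Theorem \ref{finalLCCapplied} — namely $X_{\beta}(b)=(Q_{\beta}^{-1})\cdot[X(Q_{\beta}\cdot b)]$ — one gets $K^{\B}_{\beta}(X,Y)=K^{\B}_{b_{0}}(X_{\beta},Y_{\beta})$. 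Then, setting $u:=\overline{X_{\beta}}(b_{0})$ and $v:=\overline{Y_{\beta}}(b_{0})$, which lie in $\fm$ because $b_{0}\in\V$, the fundamental vector fields $u^{*},v^{*}\in\X(\B)$ satisfy $u^{*}(b_{0})=(d_{e}\pi_{0})(\overline{X_{\beta}}(b_{0}))=X_{\beta}(b_{0})$ and likewise $v^{*}(b_{0})=Y_{\beta}(b_{0})$; by tensoriality of the curvature this yields $K^{\B}_{b_{0}}(X_{\beta},Y_{\beta})=K^{\B}_{b_{0}}(u^{*},v^{*})$.

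Next I would apply Lemma \ref{lemNaturalityCurva} once more, now to the isometry $\widehat{\pi_{0}}$. By (\ref{linkFVFs}) we have $\left(\widehat{\pi_{0}}^{-1}\right)_{*}u^{*}=\widetilde{u}$ and $\left(\widehat{\pi_{0}}^{-1}\right)_{*}v^{*}=\widetilde{v}$, so $K^{\B}_{b_{0}}(u^{*},v^{*})=K^{\G/\K}_{o}(\widetilde{u},\widetilde{v})$, which by (\ref{recallCurvaNormal}) equals $\left\|\lb u,v\rb_{\fk}\right\|^{2}+\frac{1}{4}\left\|\lb u,v\rb_{\fm}\right\|^{2}$. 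Since by definition $U^{\beta}_{0}=\lb\overline{X_{\beta}}(b_{0}),\overline{Y_{\beta}}(b_{0})\rb=\lb u,v\rb$, this is precisely (\ref{curvaB}). For part $(ii)$, the Bracket Condition (\ref{BracketCondB}) gives $\lb\fm,\fm\rb\subset\fk$; as $u,v\in\fm$ we get $U^{\beta}_{0}\in\fk$, so $\left(U^{\beta}_{0}\right)_{\fm}=0$ and $\left(U^{\beta}_{0}\right)_{\fk}=U^{\beta}_{0}$, and (\ref{curvaB}) collapses to (\ref{curvaBBracket}).

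The only delicate point — the genuine obstacle — is the legitimacy of swapping the arbitrary fields $X,Y$ for the fundamental fields $u^{*},v^{*}$: this rests on the tensorial nature of the sectional curvature, on the identities $(d_{e}\pi_{0})(\overline{X_{\beta}}(b_{0}))=X_{\beta}(b_{0})$ and $\left(\widehat{\pi_{0}}^{-1}\right)_{*}u^{*}=\widetilde{u}$, and on noting that whatever normalization of $\{u,v\}$ is implicit in (\ref{recallCurvaNormal}) is preserved all along, since $\cL^{\B}_{Q_{\beta}}$, $\widehat{\pi_{0}}$ and the horizontal-lift isomorphism $\omega_{b_{0}}:T_{b_{0}}\B\lra\fm$ are all isometric. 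Everything beyond that is direct substitution.
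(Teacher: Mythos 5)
Your proof is correct and follows essentially the same route as the paper's: transport by the isometry $\cL^{\B}_{Q_{\beta}}$ to reduce to $b_{0}$, transport by the isometry $\widehat{\pi_{0}}$ to reduce to the origin $o$ of $\G/\K$, exploit the tensoriality of sectional curvature to replace arbitrary vector fields by fundamental ones with the same value at the base point, and then apply the normal-homogeneous curvature formula. The only difference from the paper is cosmetic: you swap to fundamental fields $u^{*},v^{*}$ already on $\B$ and then use $(\widehat{\pi_{0}}^{-1})_{*}u^{*}=\widetilde{u}$ from $(\ref{linkFVFs})$, whereas the paper first pushes $X_{\beta},Y_{\beta}$ forward to $\G/\K$ and replaces there; your verification that $u^{*}(b_{0})=(d_{e}\pi_{0})(\overline{X_{\beta}}(b_{0}))=X_{\beta}(b_{0})$ plays precisely the role of the paper's identity $(\ref{conditionForL3})$.
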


\begin{proof}
We have that 
\begin{align}
K^{\B}_{\beta}\left(X, Y\right) &= K^{\B}_{b_{0}} \left( X_{\beta} , Y_{\beta} \right)\label{L1} \\
&= K^{\G/\K}_{o} \left( \left( \widehat{\pi_{0}}^{-1} \right)_{*}X_{\beta} ~,~ \left( \widehat{\pi_{0}}^{-1} \right)_{*}Y_{\beta} \right) \label{L2}\\
& = K^{\G/\K}_{o} \left( \widetilde{\overline{X_{\beta}} \left(b_{0}\right)} ~,~ \widetilde{\overline{Y_{\beta}} \left(b_{0}\right)} \right) \label{L3}. 
\end{align}

\noindent
$(\ref{L1})$ and $(\ref{L2})$ follow from Lemma $\ref{lemNaturalityCurva}$, applied to $\cL^{\B}_{Q_{\beta}}$ and to $\widehat{\pi_{0}}$. For $(\ref{L3})$, it is enough to prove that 
\begin{equation}\label{conditionForL3}
\left( \left( \widehat{\pi_{0}}^{-1} \right)_{*}X_{\beta} \right) (o) = \left( \widetilde{\overline{X_{\beta}} \left(b_{0}\right)} \right) (o)
\quad \textrm{i.e.} \quad 
\left(d_{b_{0}}\left(\widehat{\pi_{0}}^{-1}\right) \right)(X_{\beta}(b_{0})) = \left( d_{e}\psi \right) \left( \overline{X_{\beta}} \left(b_{0}\right) \right).
\end{equation}

\noindent
Indeed, the sectional curvature at any point depends only on the values of the vector fields at this point. Now, $\left(d_{o}\widehat{\pi_{0}}\right) \circ \left(\left( d_{e}\psi \right)_{|\fm}\right) = \left( d_{e}\pi_{0} \right)_{|\fm}$. Thus, 
\begin{equation}\label{chainRule}
d_{b_{0}}\left(\widehat{\pi_{0}}^{-1}\right) = \left(d_{o}\widehat{\pi_{0}}\right)^{-1} =
\left( d_{e}\psi \right)_{|\fm} \circ \left( d_{e}\pi_{0} \right)_{|\fm}^{-1}. 
\end{equation}

\noindent
Then, $(\ref{conditionForL3})$ is obtained by evaluating both members of $(\ref{chainRule})$ at $X_{\beta}\left(b_{0}\right)$. Finally, by $(\ref{recallCurvaNormal})$, $(i)$ holds. 
\end{proof}

\section{Geometry of flag manifolds}\label{sect4}

Recall that, for a type $\I=(q_{i})_{1 \leq i \leq r}$, the compact connected Lie group $SO(n)$ acts smoothly on the product manifold $G^{\I}:=\prod G_{q_{i}}$ by 
\begin{equation}\label{actionOfOrtho}
(Q, \cP) \lmp Q \cdot \cP := \left( QP_{1}Q^{T}, ..., QP_{r}Q^{T}\right), \quad Q \in SO(n), \cP = (P_{i})_{i} \in G^{\I}.  
\end{equation}

\noindent
We prove in Corollary $\ref{corrPiiSurjective}$ below that $\F$ is the orbit of the standard flag $\left( P_{0}^{i}\right)_{1 \leq i \leq r}$ defined in $(\ref{DefStandardFlag})$. Now, the  isotropy group of $\cP_{0}$ is $SO(\I):=SO(n) \cap O(\I)$. Thus, $\F$ is endowed with the metric $g^{\cF}$ such that the map $\widehat{\pi}_{0}^{\I} : \left( SO(n)/SO(\I), \overline{g}^{O} \right) \lra \left( \F, g^{\cF} \right)$, induced by $\piio$, is an \textit{isometry}. So, we may apply the results of Section $\ref{sect3}$ to $\F$, which is the object of this Section.

\subsection{Basic geometry of flag manifolds}

\subsubsection{The main orbital map}\label{mainOMflag}

Given a type $\I=(q_{i})_{1 \leq i \leq r}$, we partition any $n \times n$ matrix $M$ into blocks of its columns: 
\begin{equation*}
M = \begin{pmatrix}
M^{(1)} & ... & M^{(i)} & ... & M^{(r)} 
\end{pmatrix}. 
\end{equation*}

\noindent
where $M^{(i)} \in \R^{n \times q_{i}}$ is called the $q_{i}$-block of $M$.

\begin{defi}\label{defiGenerateFlag}
Let $\cP$ be a flag of type $\I=(q_{i})$. For $Q \in O(n)$, we say that its columns generate $\cP=(P_{i})_{i}$ when for all $1 \leq i \leq r$, the columns of $Q^{(i)}$ form a $q_{i}$-frame generating the range of $P_{i}$. 
\end{defi}

\begin{lem}\label{FiberPii}
Let $\cP$ be a flag of type $\I=(q_{i})_{1 \leq i \leq r}$ and $Q \in O(n)$. Then, the columns of $Q$ generate the flag $\cP$ iff 
\begin{equation}\label{generateFlag}
\pii(Q) = \cP, 
\end{equation}

\noindent
where the map $\pii : O(n) \lra \F$ is defined by $\pii(Q)=Q \cdot \cP_{0}$, whose restriction to $SO(n)$ is $\piio$. 
\end{lem}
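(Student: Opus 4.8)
The plan is to prove the equivalence in Lemma \ref{FiberPii} by unwinding both sides in terms of the block-column structure introduced just before the statement, and by using the description of the standard flag $\cP_0$ in $(\ref{DefStandardFlag})$. First I would fix $Q \in O(n)$ with blocks $Q = (Q^{(1)} \, \cdots \, Q^{(r)})$, where $Q^{(i)} \in \R^{n \times q_i}$, and observe that since $Q$ is orthogonal, the columns of $Q^{(i)}$ form a $q_i$-frame (i.e. an orthonormal family). Next I would compute $\pii(Q) = Q \cdot \cP_0 = (QP_0^1 Q^T, \dots, QP_0^r Q^T)$ block by block: because $P_0^i = \mathrm{Diag}[0_{q_1}, \dots, I_{q_i}, \dots, 0_{q_r}]$ is the orthogonal projector onto the coordinate subspace spanned by the $i$-th block of standard basis vectors, the matrix $Q P_0^i Q^T$ is the orthogonal projector onto $Q$ applied to that coordinate subspace, which is precisely $\mathrm{span}$ of the columns of $Q^{(i)}$, i.e. the range of $Q^{(i)}$. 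This is the elementary but central identity: conjugation of a projector by an orthogonal matrix is the projector onto the image of the range.

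From there the two directions are almost immediate. If the columns of $Q$ generate $\cP = (P_i)_i$, then by Definition \ref{defiGenerateFlag} the columns of $Q^{(i)}$ form a $q_i$-frame generating the range of $P_i$, hence $\mathrm{range}(QP_0^i Q^T) = \mathrm{range}(Q^{(i)}) = \mathrm{range}(P_i)$ for every $i$; since both $QP_0^i Q^T$ and $P_i$ are orthogonal projectors (the former because $P_0^i$ is and conjugation by an orthogonal matrix preserves symmetry and idempotence, the latter by definition of $G_{q_i}$ in $(\ref{GrassProj})$), equality of ranges forces $QP_0^i Q^T = P_i$, so $\pii(Q) = \cP$. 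Conversely, if $\pii(Q) = \cP$, then $QP_0^i Q^T = P_i$, so taking ranges gives $\mathrm{range}(Q^{(i)}) = \mathrm{range}(P_i)$, and since the columns of $Q^{(i)}$ are orthonormal (as $Q \in O(n)$) they form a $q_i$-frame generating $\mathrm{range}(P_i)$; this holds for all $i$, which is exactly the assertion that the columns of $Q$ generate $\cP$.

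The only mild subtlety — and the one place worth writing out carefully — is the fact that an orthogonal projector is uniquely determined by its range, so that matching ranges of two projectors is equivalent to matching the projectors themselves. I would invoke this directly: for $P, P' \in G_q$, $\mathrm{range}(P) = \mathrm{range}(P')$ implies $P = P'$ since the orthogonal projector onto a given subspace is unique. I should also note explicitly that $QP_0^iQ^T \in G_{q_i}$, i.e. it really does land in $G_{q_i}$ as defined in $(\ref{GrassProj})$: it is symmetric since $(QP_0^iQ^T)^T = QP_0^iQ^T$, idempotent since $(QP_0^iQ^T)^2 = QP_0^iQ^T$ using $Q^TQ = I_n$ and $(P_0^i)^2 = P_0^i$, and of rank $q_i$ since conjugation by an invertible matrix preserves rank. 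I would also remark in passing that the block decomposition of $Q P_0^i Q^T$ shows at once that $Q \cdot \cP_0 \in \F$ — the mutual orthogonality $P_iP_j = 0$ for $i \neq j$ follows from $P_0^iP_0^j = 0$ — but strictly speaking that belongs to the proof that $\F$ is the orbit (Corollary \ref{corrPiiSurjective}) rather than to this Lemma, so I would keep the focus here on the range characterization. No genuine obstacle is expected; the proof is a direct translation between the projector language of $(\ref{GrassProj})$–$(\ref{flagProj})$ and the frame language of Definition \ref{defiGenerateFlag}.
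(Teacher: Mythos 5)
Your proof is correct and rests on the same core identity as the paper's: conjugating a projector by $Q\in O(n)$ moves its range by $Q$, so $QP_0^iQ^T = P_i$ is equivalent to $\mathrm{range}(Q^{(i)})=\mathrm{range}(P_i)$. The paper phrases this as a change of basis ($Q^{-1}P_iQ$ is the matrix of the projection $p_i$ in the basis given by the columns of $Q$, and equals $P_0^i$ precisely when the $i$-th block of columns spans $\mathrm{Im}(P_i)$), while you phrase it as uniqueness of the orthogonal projector onto a given subspace; these are two wordings of the same elementary argument.
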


\begin{proof}
See Appendix.
\end{proof}

\begin{corr}\label{corrPiiSurjective}
The map $\pii : O(n) \lra \F$ and its restriction $\piio$ to $SO(n)$ are surjective. 
\end{corr}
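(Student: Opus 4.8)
The plan is to prove surjectivity of $\pii : O(n) \lra \F$ and then deduce the statement for $\piio$. First I would take an arbitrary flag $\cP = (P_i)_{1 \leq i \leq r} \in \F$ and construct explicitly an orthogonal matrix whose columns generate $\cP$ in the sense of Definition \ref{defiGenerateFlag}. Since $\cP \in \F$, the ranges $R_i := \Img(P_i)$ are mutually orthogonal subspaces of $\R^n$ with $\dim R_i = q_i$ and $\bigoplus_i R_i = \R^n$ (the spanning follows from $\sum q_i = n$ together with orthogonality, or directly from $\sum_i P_i = I_n$, which itself follows from the projector relations). For each $i$, choose an orthonormal basis of $R_i$ and assemble these $n$ vectors, block by block, into the columns of a matrix $Q$; then $Q \in O(n)$ and by construction the columns of $Q^{(i)}$ form a $q_i$-frame spanning $\Img(P_i)$, so Lemma \ref{FiberPii} gives $\pii(Q) = \cP$. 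This shows $\pii$ is surjective.

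Next I would pass from $O(n)$ to $SO(n)$. Given $\cP \in \F$ and $Q \in O(n)$ with $\pii(Q) = \cP$, if $\det Q = 1$ we are done; if $\det Q = -1$, I would modify $Q$ by negating a single column, say a column belonging to the block $Q^{(i)}$ for some fixed $i$ (any $i$ works, e.g. $i = r$). Negating one column flips the determinant to $+1$, and it does not change the span of $Q^{(i)}$, hence does not change which flag is generated: the columns of the new matrix $\widetilde{Q}$ still generate $\cP$, so by Lemma \ref{FiberPii} again, $\piio(\widetilde{Q}) = \pii(\widetilde{Q}) = \cP$ with $\widetilde{Q} \in SO(n)$. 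This establishes surjectivity of $\piio$.

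There is essentially no hard part here; the statement is a routine consequence of Lemma \ref{FiberPii}. The one point requiring a line of care is the claim that an element of $\F$ really corresponds to an orthogonal direct sum decomposition of $\R^n$ of the prescribed type — i.e. that the $P_i$ being symmetric idempotents of rank $q_i$ with $P_i P_j = 0$ for $i \neq j$ forces $\sum_i P_i = I_n$. This follows because $E := \sum_i P_i$ is symmetric, satisfies $E^2 = \sum_i P_i^2 = E$ (the cross terms vanish since $P_iP_j = 0$), hence is an orthogonal projector, and $\rk(E) = \sum_i \rk(P_i) = \sum_i q_i = n$ since $\F$ is the flag manifold of type $\I$ with $\sum_i q_i = n$; a rank-$n$ projector on $\R^n$ is $I_n$. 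With that observation in hand the construction of $Q$ above is immediate, and the $SO(n)$ refinement is the trivial column-sign adjustment. I would therefore present the proof as: (1) recall $\sum_i P_i = I_n$ for $\cP \in \F$; (2) build $Q \in O(n)$ generating $\cP$ from orthonormal bases of the $\Img(P_i)$ and invoke Lemma \ref{FiberPii}; (3) adjust one column's sign to land in $SO(n)$.
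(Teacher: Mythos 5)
Your proposal is correct and takes essentially the same approach as the paper: surjectivity of $\pii$ via Lemma \ref{FiberPii} by assembling orthonormal bases of the mutually orthogonal ranges $\Img(P_i)$, then adjusting to $SO(n)$. For the second step the paper argues abstractly that any coset $QO(\I)$ contains an element of $SO(n)$; your column negation is precisely a concrete instance of this, since negating one column amounts to right-multiplying $Q$ by a block-diagonal matrix in $O(\I)$ of determinant $-1$.
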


\begin{proof}
The assertion on $\pii$ follows readily from $(\ref{generateFlag})$. Now, we prove the statement on $\piio$. Any equivalence class $QO(\I)$ in $O(n)/O(\I)$ contains an element $Q'$ of $SO(n)$, so that $\pii(Q')=\pii(Q)$. So, $\piio$ is also surjective.
\end{proof}

\subsubsection{Vertical and horizontal spaces}

Since $SO(n)$ is a matrix Lie group, the Lie bracket in its Lie algebra $\son$ is the commutator of matrices, i.e. for any $A, B \in \son$, $\lb A, B \rb = AB-BA$.

\begin{lem}\label{diffPi}
$(i)$ Let $\cP=(P_{i})_{i} \in \F$, $Q \in SO(n)$ and $Q\Omega \in T_{Q}SO(n)$ with $\Omega \in \mathfrak{so}(n)$. Then,  
\begin{equation*}
\left(d_{Q}\pi_{\cP} \right) \left(Q\Omega \right) = \left( Q\left[ \Omega, P_{i}\right] Q^{T} \right)_{i}. 
\end{equation*}

\noindent
We recall that $\piio$ is defined as the main orbital map $\pi_{\cP_{0}}$. Then, 

\smallskip

\noindent
$(ii)$ The vertical space at $Q \in SO(n)$ wrt $\piio$ is 
\begin{equation}\label{verSpaceFlag}
V_{Q}^{\piio} := \ker \left( d_{Q}\piio \right) = \left\{ Q \mathrm{Diag}\left[ \Omega_{11}, ..., \Omega_{ii}, ..., \Omega_{rr}\right] : \Omega_{ii} \in \mathfrak{so}(q_{i}), ~ 1 \leq i \leq r \right\}. 
\end{equation}

\noindent
$(iii)$ The horizontal space at $Q \in SO(n)$ wrt $\piio$ is 
\begin{equation}\label{horSpaceFlag}
H_{Q}^{\piio} := \left( V_{Q}^{\piio} \right)^{\perp} = \left\{ Q\Omega : \Omega \in \mathfrak{so}(n), ~\Omega_{ii} = 0, ~ 1 \leq i \leq r \right\}. 
\end{equation}

\end{lem}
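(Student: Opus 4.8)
The plan is to compute the differential of $\piio = \pi_{\cP_0}$ directly from the action formula and then read off the vertical and horizontal spaces. First I would prove $(i)$: fix $\cP = (P_i)_i \in \F$ and a curve $t \mapsto Q e^{t\Omega}$ in $SO(n)$ through $Q$ with velocity $Q\Omega$, and differentiate $\pi_{\cP}(Q e^{t\Omega}) = \left( Q e^{t\Omega} P_i e^{-t\Omega} Q^T \right)_i$ at $t=0$. The $i$-th component has derivative $Q(\Omega P_i - P_i \Omega) Q^T = Q[\Omega, P_i] Q^T$, which is exactly the claimed formula. This is the only genuine computation, and it is routine.

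Next, for $(ii)$, specialize to $\cP = \cP_0$, so $P_i = P_0^i = \Dg[0_{q_1}, \ldots, I_{q_i}, \ldots, 0_{q_r}]$. Then $Q\Omega \in \ker(d_Q\piio)$ iff $[\Omega, P_0^i] = 0$ for all $i$. Writing $\Omega$ in block form $(\Omega_{jk})_{1 \le j,k \le r}$ with $\Omega_{jk} \in \R^{q_j \times q_k}$, the relation $[\Omega, P_0^i] = 0$ forces the off-block entries in the $i$-th block row and column to vanish: more precisely $[\Omega, P_0^i]_{jk} = \Omega_{jk}(\delta_{ki} - \delta_{ji})$, so $[\Omega, P_0^i] = 0$ for every $i$ iff $\Omega_{jk} = 0$ whenever $j \ne k$. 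Hence $\Omega$ is block diagonal, $\Omega = \Dg[\Omega_{11}, \ldots, \Omega_{rr}]$, and since $\Omega \in \son$ each diagonal block $\Omega_{ii}$ is skew-symmetric, i.e. $\Omega_{ii} \in \mathfrak{so}(q_i)$. This gives $(\ref{verSpaceFlag})$.

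Finally, $(iii)$ follows by taking the orthogonal complement of $V_Q^{\piio}$ inside $T_Q SO(n) = Q\son$ with respect to $g^O$. Since $Q \mapsto Q\cdot$ (left multiplication) is an isometry, it suffices to compute $\left( V_e^{\piio} \right)^\perp$ inside $\son$ with the inner product $\frac12\tr(\Omega^T\widetilde\Omega)$. The block-diagonal skew-symmetric matrices and the matrices $\Omega$ with $\Omega_{ii} = 0$ for all $i$ are clearly orthogonal complements of each other in $\son$: for a block matrix the trace pairing decomposes as a sum over blocks $\sum_{j,k}\tr(\Omega_{jk}^T\widetilde\Omega_{jk})$, so a block-diagonal $\widetilde\Omega$ pairs to zero with every $\Omega$ having vanishing diagonal blocks, and the dimensions add up to $\dim\son$. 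Translating back by $Q$ yields $(\ref{horSpaceFlag})$. I do not anticipate a real obstacle here; the only point requiring a little care is the index bookkeeping in the commutator $[\Omega, P_0^i]$, where one must track that the $i$-th block row gets multiplied on one side and the $i$-th block column on the other, producing the factor $(\delta_{ki} - \delta_{ji})$ that isolates precisely the off-diagonal blocks touching block $i$.
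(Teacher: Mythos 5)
Your proposal is correct and follows the same route as the paper: compute $(d_Q\pi_\cP)(Q\Omega)$ by differentiating a curve through $Q$ with velocity $Q\Omega$ (the paper uses an arbitrary such curve, you use $t\mapsto Qe^{t\Omega}$; this is immaterial), then read off the kernel via the condition $[\Omega,P_0^i]=0$ for all $i$, and take the orthogonal complement. The paper leaves the block computation for $(ii)$ and the orthogonality check for $(iii)$ implicit; you carry them out, and your identity $[\Omega,P_0^i]_{jk}=\Omega_{jk}(\delta_{ki}-\delta_{ji})$ and the trace-pairing argument are both correct.
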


\begin{proof}
Let $\gamma : [0,1] \longrightarrow SO(n)$ be a smooth curve with $\gamma(0)=Q$ and $\dot{\gamma}(0)=Q\Omega$. Then, 
\begin{equation*}
\left(d_{Q}\pi_{\cP} \right) \left(Q\Omega \right) = \left( \at{\frac{d}{dt}}{t=0} \gamma(t) P_{i} \gamma(t)^{T} \right)_{i} = 
\left( \left(Q\Omega \right)P_{i}Q^{T} + QP_{i}\left(Q\Omega \right)^{T} \right)_{i} = \left( Q\left[ \Omega, P_{i}\right] Q^{T} \right)_{i}.
\end{equation*}

\noindent
Indeed, $(Q\Omega)^{T} = - \Omega Q^{T}$, since $\Omega \in \son$. This proves $(i)$. We deduce that
\begin{equation*}
V_{Q}^{\piio} = \left\{ Q\Omega \in T_{Q}O(d) : \left[ \Omega, P_{0}^{i}\right]=0, ~ 1 \leq i \leq r \right\}.
\end{equation*}

\noindent
Then, the computation of $\left[ \Omega, P_{0}^{i}\right]$ implies that $(ii)$ holds. Finally, $(iii)$ follows readily from $(ii)$. 
\end{proof}

\subsubsection{Horizontal lifts of vector fields}

\begin{lem}\label{PrelimHorLift}
For all $\Omega \in \son$, let $\Omega_{\fm}$ be its horizontal part. Then, 
\begin{equation}\label{horPii}
\Omega_{\fm} = \frac{1}{2} \sum\limits_{i=1}^{r} \lb \lb \Omega, P_{0}^{i} \rb, P_{0}^{i} \rb. 
\end{equation}
\end{lem}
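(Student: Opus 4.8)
The plan is to decompose an arbitrary $\Omega \in \son$ according to the block structure induced by the type $\I$, and then verify the claimed identity by a direct block computation, exploiting the explicit description of the horizontal space in $(\ref{horSpaceFlag})$. First I would write $\Omega$ in block form $\Omega = (\Omega_{ij})_{1 \le i,j \le r}$, where $\Omega_{ij} \in \R^{q_i \times q_j}$ and, since $\Omega$ is skew-symmetric, $\Omega_{ji} = -\Omega_{ij}^{T}$; in particular each diagonal block $\Omega_{ii}$ lies in $\mathfrak{so}(q_i)$. By $(\ref{verSpaceFlag})$ and $(\ref{horSpaceFlag})$, the orthogonal decomposition $\son = \fk \oplus \fm$ with respect to the Frobenius product is exactly: $\fk$ consists of the block-diagonal skew-symmetric matrices, and $\fm = \fk^{\perp}$ consists of the skew-symmetric matrices with all diagonal blocks zero. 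Hence $\Omega_{\fm}$ is simply $\Omega$ with its diagonal blocks $\Omega_{ii}$ deleted, i.e. the off-block-diagonal part of $\Omega$.

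Next I would compute the right-hand side of $(\ref{horPii})$ block by block. Recall $P_{0}^{i} = \mathrm{Diag}[0_{q_1}, \dots, I_{q_i}, \dots, 0_{q_r}]$. For fixed $i$, the bracket $\lb \Omega, P_0^i \rb = \Omega P_0^i - P_0^i \Omega$ in block form picks out exactly the $i$-th block row and $i$-th block column of $\Omega$ with opposite signs: its $(k,\ell)$ block is $\Omega_{k\ell}$ if $\ell = i, k \ne i$; $-\Omega_{k\ell}$ if $k = i, \ell \ne i$; and $0$ otherwise (the $(i,i)$ block cancels). Then $\lb \lb \Omega, P_0^i \rb, P_0^i \rb$ brackets this again with $P_0^i$: a short computation shows its $(k,\ell)$ block equals $\Omega_{k\ell}$ whenever exactly one of $k,\ell$ equals $i$, and $0$ otherwise. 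Summing over $i = 1, \dots, r$, each off-diagonal block $\Omega_{k\ell}$ with $k \ne \ell$ is hit exactly twice (once for the term $i=k$ and once for $i=\ell$), while diagonal blocks are never hit. Therefore $\sum_{i=1}^r \lb \lb \Omega, P_0^i \rb, P_0^i \rb = 2\,\Omega_{\fm}$, which is precisely $(\ref{horPii})$.

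The only mildly delicate point is keeping the signs and the index bookkeeping straight in the iterated bracket; I expect that to be the main (though entirely routine) obstacle. An alternative, slightly slicker route that avoids explicit index chasing is to note that $\ad(P_0^i)^2$ acts on the block $(k,\ell)$ entry as multiplication by $-(\mathbf{1}_{k=i} - \mathbf{1}_{\ell=i})^2$, whence $-\sum_i \ad(P_0^i)^2$ multiplies the $(k,\ell)$ block by $\sum_i (\mathbf{1}_{k=i} - \mathbf{1}_{\ell=i})^2$, which equals $0$ if $k=\ell$ and $2$ if $k \ne \ell$; since $\lb \lb \Omega, P_0^i\rb, P_0^i \rb = \ad(P_0^i)^2(\Omega)$ up to sign, this again gives $\sum_i \lb\lb \Omega, P_0^i\rb, P_0^i\rb = 2\Omega_{\fm}$. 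Either way, identifying $\Omega_{\fm}$ as the off-block-diagonal part via $(\ref{horSpaceFlag})$ is the conceptual core, and the rest is bookkeeping.
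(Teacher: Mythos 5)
Your main argument is correct and is essentially the paper's own proof: both decompose $\Omega$ into blocks induced by the type $\I$, compute $\lb\lb\Omega,P_0^i\rb,P_0^i\rb$ blockwise, and observe that summing over $i$ picks up each off-block-diagonal entry exactly twice while annihilating the diagonal blocks. One small caveat in your alternative $\ad$-formulation: $\ad(P_0^i)$ acts on the $(k,\ell)$ block as multiplication by $(\mathbf{1}_{k=i}-\mathbf{1}_{\ell=i})$, hence $\ad(P_0^i)^2$ acts by $+(\mathbf{1}_{k=i}-\mathbf{1}_{\ell=i})^2$ (no minus sign) and equals $\lb\lb\Omega,P_0^i\rb,P_0^i\rb$ exactly, not merely up to sign; the conclusion is unaffected.
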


\begin{proof}
See Appendix. 
\end{proof}

\noindent
We deduce the following result, which provides the horizontal lifts of vector fields on $\F$.

\begin{prop}\label{horLiftFlag}
Let $\cP \in \F$ and $\Delta \in T_{\cP}\F$. Then, for $Q \in \left( \piio \right)^{-1}(\cP)$, the horizontal lift at $Q$ of $\Delta$ wrt $\piio$ is
\begin{equation}\label{flagHorLift}
\Delta^{\sharp}_{Q} = \frac{1}{2} \left( \sum\limits_{i=1}^{r} \lb \Delta_{i}, P_{i} \rb \right) Q. 
\end{equation}

\end{prop}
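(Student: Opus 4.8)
The plan is to combine the explicit description of the horizontal space from Lemma \ref{diffPi}(iii), the formula for $\Omega_{\fm}$ from Lemma \ref{PrelimHorLift}, and the fact that $(d_Q\piio)_{|H_Q^{\piio}}$ is a linear isometry, so that the horizontal lift is the unique horizontal vector mapping to $\Delta$ under $d_Q\piio$. Concretely, I will write a candidate horizontal vector $Q\Omega$ with $\Omega\in\son$ and impose $(d_Q\pi_{\cP})(Q\Omega)=\Delta$; by Lemma \ref{diffPi}(i) this reads $\big(Q\lb\Omega,P_i\rb Q^T\big)_i=(\Delta_i)_i$, i.e. $\lb\Omega,P_i\rb = Q^T\Delta_i Q$ for every $i$. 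Writing $\wT := Q^T\Delta\, Q$ componentwise (so $\wT_i = Q^T\Delta_i Q$), the task reduces to finding the horizontal $\Omega$ with $\lb\Omega,P_i^0\rb = \wT_i$ for all $i$, after translating $P_i = QP_i^0Q^T$.

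The key computational step is to guess $\Omega = \tfrac12\sum_{i}\lb\wT_i,P_i^0\rb$ and verify the bracket relations. Here one uses that $\Delta\in T_\cP\F$ forces strong algebraic constraints on $\wT$: differentiating $P_i^2=P_i$ gives $P_i\Delta_iP_i$-type relations, and differentiating $P_iP_j=0$ couples the off-diagonal blocks; together these say precisely that each $\wT_i$ is "off-block-diagonal" in the right way, which is exactly the content needed for the identity $\tfrac12\sum_j\lb\lb\wT_i,P_i^0\rb,P_j^0\rb$-type manipulations to collapse. In fact the cleanest route is to reuse Lemma \ref{PrelimHorLift}: apply it not to $\Omega$ but recognize that $\tfrac12\sum_i\lb\wT_i,P_i^0\rb$ is horizontal by the same block-structure argument, and then check $\lb\,\tfrac12\sum_i\lb\wT_i,P_i^0\rb\,,\,P_k^0\rb=\wT_k$ using $\lb\lb\wT_i,P_i^0\rb,P_k^0\rb = \delta_{ik}\wT_i$ (for $i\neq k$ the double bracket vanishes because of the mutual-orthogonality constraints on $\wT$, and for $i=k$ it returns $\wT_i$ because $\wT_i$ has no $q_i\times q_i$ diagonal block, again from differentiating $P_i^2=P_i$). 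Finally, translating back by $\Ad(Q)$: since $P_i = QP_i^0Q^T$ and $\Delta_i = Q\wT_iQ^T$, one gets $Q\Omega Q^T\cdot$-conjugation turning $\tfrac12\sum_i\lb\wT_i,P_i^0\rb$ into $\tfrac12\sum_i\lb\Delta_i,P_i\rb$, and multiplying on the right by $Q$ yields $(\ref{flagHorLift})$.

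The main obstacle I anticipate is the verification that $\tfrac12\sum_i\lb\Delta_i,P_i\rb\,Q$ is genuinely \emph{horizontal} (i.e. that $\Omega := Q^T\big(\tfrac12\sum_i\lb\Delta_i,P_i\rb\big)Q$ lies in $\son$ with vanishing $q_i\times q_i$ diagonal blocks) and that it maps to $\Delta$ under $d_Q\piio$ — both hinge on correctly extracting, from the tangency condition $\Delta\in T_\cP\F$, the precise block-vanishing and symmetry relations satisfied by the $\Delta_i$. Skew-symmetry of each $\lb\Delta_i,P_i\rb$ follows since $\Delta_i,P_i$ are symmetric; the diagonal-block vanishing and the bracket-inversion identity are where the flag constraints $P_iP_j=0$ genuinely enter. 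Once those block identities are in hand, the remaining steps are routine bracket algebra, closely parallel to the proof of Lemma \ref{PrelimHorLift}, and uniqueness of the horizontal lift closes the argument.
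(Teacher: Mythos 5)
Your plan runs in the reverse direction from the paper's proof: you guess the candidate $\Omega=\tfrac12\sum_j\lb S_j,P_0^j\rb$ (where $S_j:=Q^T\Delta_jQ$) and then set out to verify that $\Omega$ is horizontal and that $\lb\Omega,P_0^k\rb=S_k$ for every $k$. The paper instead starts from the known lift $\Delta^\sharp_Q=Q\widehat{\Omega}$ with $\widehat{\Omega}\in\fm$, observes via Lemma~\ref{diffPi} that $S_i=\lb\widehat{\Omega},P_0^i\rb$, substitutes this into the right-hand side of $(\ref{flagHorLift})$, and collapses the result to $Q\widehat{\Omega}_\fm=Q\widehat{\Omega}$ by a single application of Lemma~\ref{PrelimHorLift}. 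The forward route is markedly cleaner because parametrizing $S_i=\lb\widehat{\Omega},P_0^i\rb$ automatically packages all the block-vanishing and mutual-orthogonality constraints that your backward route has to extract by hand from the tangency condition $\Delta\in T_\cP\F$.

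There is also a genuine error in your plan. You assert $\lb\lb S_i,P_0^i\rb,P_0^k\rb=\delta_{ik}S_i$, justifying the $i\neq k$ case by the claim that the double bracket vanishes because of the mutual-orthogonality constraints. This is false. For $i\neq k$, a block computation (in the notation $M^{(a,b)}$ of the paper's Appendix) shows that $\lb\lb S_i,P_0^i\rb,P_0^k\rb$ has nonvanishing blocks $(i,k)=-(S_i)^{(i,k)}$ and $(k,i)=-(S_i)^{(k,i)}$. Differentiating $P_iP_k=0$ only gives $(S_i)^{(i,k)}+(S_k)^{(i,k)}=0$; it does not force $(S_i)^{(i,k)}=0$. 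Concretely, for $r=2$ one has $S_2=-S_1$ and $\lb\lb S_1,P_0^1\rb,P_0^2\rb=-S_1$, which is generically nonzero. What is true is that the full \emph{sum} $\tfrac12\sum_i\lb\lb S_i,P_0^i\rb,P_0^k\rb$ equals $S_k$: in block $(j,k)$ with $j\neq k$, the $i=k$ term contributes $\tfrac12(S_k)^{(j,k)}$ and the $i=j$ term contributes $-\tfrac12(S_j)^{(j,k)}=\tfrac12(S_k)^{(j,k)}$ via the orthogonality constraint, giving $(S_k)^{(j,k)}$ in total. So the cancellation happens only across the sum, not term by term. Once stated this way your verification does close, but it amounts to reproving Lemma~\ref{PrelimHorLift} by hand with the added tangency constraints in play; the paper's substitution argument is shorter and safer.
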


\begin{proof}
There exists a unique $\widehat{\Omega} \in \fm$ such that $\Delta^{\sharp}_{Q} = Q\widehat{\Omega}$. Thus, $\Delta = \left( d_{Q}\piio \right)\left( Q\widehat{\Omega} \right) = \left( Q \lb \widehat{\Omega}, P_{0}^{i} \rb Q^{T} \right)_{i}$. Then, considering the right-hand side of $(\ref{flagHorLift})$,
\begin{equation*}
\frac{1}{2} \left( \sum\limits_{i=1}^{r} \lb \Delta_{i}, P_{i} \rb \right) Q = \frac{1}{2} \left( \sum\limits_{i=1}^{r} \lb Q \lb \widehat{\Omega}, P_{0}^{i} \rb Q^{T}, QP_{0}^{i}Q^{T} \rb \right) Q = Q \left( \frac{1}{2} \sum\limits_{i=1}^{r} \lb \lb \widehat{\Omega}, P_{0}^{i} \rb, P_{0}^{i} \rb \right).
\end{equation*}

\noindent
Now, $(\ref{horPii})$ implies that 
\begin{equation*}
Q \left( \frac{1}{2} \sum\limits_{i=1}^{r} \lb \lb \widehat{\Omega}, P_{0}^{i} \rb, P_{0}^{i} \rb \right) = Q \widehat{\Omega}_{\fm} = Q\widehat{\Omega} = \Delta^{\sharp}_{Q}.
\end{equation*}

\noindent
Hereabove, the second equality holds because $\widehat{\Omega}$ lies in $\fm$.  
\end{proof}

\subsubsection{Metric, geodesics and exponential map}

By Lemma $\ref{structureMainOM}$, the main orbital map $\piio : \left( SO(n), g^{O} \right) \lra \left( \F, g^{\cF} \right)$ is a \textit{Riemannian submersion}. We deduce readily from the expression of horizontal lifts given in Proposition $\ref{horLiftFlag}$ that for all $\cP \in \F$, 
\begin{equation*}
g^{\cF}_{\cP} \left( \Delta, \Delta' \right) = \frac{1}{4} \sum\limits_{i, j=1}^{r} \left\langle \lb \Delta_{i}, P_{i} \rb , \lb \Delta'_{j}, P_{j} \rb \right\rangle, 
\quad \Delta, \Delta' \in T_{\cP}\F. 
\end{equation*}

\begin{rem}\label{metricDifferYeKe}
The metric $g^{\cF}$ on $\F$ does not coincide with the restriction to $\F$ of the product metric on $G^{\I}$ which is considered in \cite{Ye Wong and Lim 2022}. This is proven by elementary calculations. Intuitively, in $\so$, the horizontal space wrt $\piio$ does not agree with those wrt the main orbital maps of the $\left(G_{q_{i}}\right)_{1 \leq i \leq r}$. This follows from the mutual orthogonality relations in $\F$. 
\end{rem}

\begin{prop}\label{GeoExpMap}
Let $\mathcal{P} \in \F$ and $\Delta \in T_{\mathcal{P}}\F$. Then, the geodesic in $\left( \F, g^{\cF} \right)$ through $\mathcal{P}$ in the direction $\Delta$ is 
\begin{equation}\label{geoFlag}
t \lmp \piio \left( Q \exp_{m} \left( t Q^{T}\widetilde{\Omega}Q \right) \right)
\quad \textrm{where} \quad 
\widetilde{\Omega} = \frac{1}{2} \sum\limits_{i=1}^{r} \lb \Delta_{i}, P_{i} \rb
\end{equation}

\noindent
and $\exp_{m}$ is the matrix exponential. So, the Riemannian exponential map on $\F$ is defined by
\begin{equation}\label{ExpMap}
\mathrm{Exp}_{\mathcal{P}}^{\mathcal{F}^{\mathrm{I}}}\left( \Delta \right) = 
\left( \exp_{m}\left( \widetilde{\Omega} \right) P_{i} \exp_{m}\left( - \widetilde{\Omega}\right)\right)_{i}.
\end{equation}

\end{prop}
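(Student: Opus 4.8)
The statement to prove is Proposition~\ref{GeoExpMap}: the formula for geodesics and the Riemannian exponential map on $\F$. The strategy is to combine the general formula $(\ref{geodesicB})$ for geodesics in a realization $\B$ of a normal homogeneous space with the explicit expression for horizontal lifts in $\F$ obtained in Proposition~\ref{horLiftFlag}. First I would recall that by Lemma~\ref{structureMainOM} and the discussion preceding it, $\piio : SO(n) \lra \F$ is a Riemannian submersion, so that the geodesics in $\F$ are obtained by projecting horizontal geodesics in $SO(n)$, which are left translates of one-parameter subgroups since $g^{O}$ is bi-invariant. Concretely, for $\cP \in \F$ and $\Delta \in T_{\cP}\F$, picking $Q \in (\piio)^{-1}(\cP)$, the geodesic is
\begin{equation*}
\gamma^{\F}_{\cP,\Delta}(t) = \piio\!\left( Q \exp_{\g}\!\left( t\, \cL_{Q^{-1}}\!\left[ \Delta^{\sharp}_{Q} \right] \right) \right).
\end{equation*}

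\textbf{Key steps.} By Proposition~\ref{horLiftFlag}, $\Delta^{\sharp}_{Q} = \widetilde{\Omega}Q$ where $\widetilde{\Omega} = \tfrac12 \sum_{i} \lb \Delta_{i}, P_{i} \rb \in \son$ (regarded as the horizontal lift written on the right, $Q\widehat{\Omega} = \widehat{\Omega}Q$ after conjugation; one must be careful here that Proposition~\ref{horLiftFlag} gives $\Delta^{\sharp}_Q = \widetilde\Omega Q$, i.e. $\widehat{\Omega} = Q^{T}\widetilde{\Omega}Q$). Then $\cL_{Q^{-1}}[\Delta^{\sharp}_Q] = Q^{-1}\widetilde{\Omega}Q = Q^{T}\widetilde{\Omega}Q$, which is an element of $\son$, and since $SO(n)$ is a matrix group, $\exp_{\g} = \exp_m$. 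Substituting gives exactly the claimed geodesic formula $(\ref{geoFlag})$. For the exponential map, I would evaluate $\gamma^{\F}_{\cP,\Delta}$ at $t=1$ and simplify: using $\exp_m(Q^{T}\widetilde{\Omega}Q) = Q^{T}\exp_m(\widetilde{\Omega})Q$, one gets
\begin{equation*}
Q\exp_m\!\left(Q^{T}\widetilde{\Omega}Q\right) = \exp_m(\widetilde{\Omega})Q,
\end{equation*}
and then applying $\piio$, i.e. conjugating each $P_{0}^{i}$ by $\exp_m(\widetilde{\Omega})Q$ and using $QP_{0}^{i}Q^{T} = P_{i}$, yields $\left(\exp_m(\widetilde{\Omega}) P_{i} \exp_m(-\widetilde{\Omega})\right)_{i}$, which is $(\ref{ExpMap})$.

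\textbf{Main obstacle.} The computation is essentially bookkeeping, so the only real subtlety is making sure the left/right conventions for horizontal lifts are consistent between the general setup of Section~\ref{sect3} (where horizontal lifts and geodesics are written with left translations $\cL_{Q^{-1}}$, matching $g^{\B}$-formulas like $(\ref{geodesicB})$) and Proposition~\ref{horLiftFlag} (where the lift is written as $\widetilde{\Omega}Q$ with $\widetilde{\Omega}$ on the left). The reconciliation is that $\Delta^{\sharp}_Q = \widetilde{\Omega}Q = Q(Q^{T}\widetilde{\Omega}Q) = Q\widehat{\Omega}$ with $\widehat{\Omega} = Q^{T}\widetilde{\Omega}Q \in \fm$, so $\cL_{Q^{-1}}[\Delta^{\sharp}_Q] = \widehat{\Omega}$; I would state this explicitly to avoid any ambiguity. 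Everything else — that $\widetilde{\Omega} \in \son$, that $\exp_{\g}=\exp_m$, that conjugation commutes with $\exp_m$, and that $QP_{0}^{i}Q^{T}=P_{i}$ because $Q \in (\piio)^{-1}(\cP)$ — is routine.
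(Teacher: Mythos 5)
Your proof is correct and follows essentially the same approach as the paper's: apply the general geodesic formula $(\ref{geodesicB})$, plug in the horizontal lift from Proposition~\ref{horLiftFlag}, compute $\cL_{Q^{-1}}[\Delta^{\sharp}_{Q}] = Q^{T}\widetilde{\Omega}Q$, and then use the naturality of the matrix exponential under conjugation together with the definition of $\piio$ to obtain $(\ref{ExpMap})$. Your extra care with the left-versus-right placement of $\widetilde{\Omega}$ is a useful clarification but does not change the argument.
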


\begin{proof}
We apply $(\ref{geodesicB})$. Here, $\exp_{\so}=\exp_{m}$ and by $(\ref{flagHorLift})$, $\cL_{Q^{-1}} \left[\Delta^{\sharp}_{Q}\right] = Q^{T}\widetilde{\Omega}Q$. So,  
\begin{equation*}
\mathrm{Exp}_{\mathcal{P}}^{\mathcal{F}^{\mathrm{I}}}\left( \Delta \right) =
\piio \left( Q \exp_{m} \left( Q^{T}\widetilde{\Omega}Q \right) \right) =
\left( \exp_{m}\left( \widetilde{\Omega} \right) P_{i} \exp_{m}\left( - \widetilde{\Omega}\right)\right)_{i}. 
\end{equation*}

\noindent
Hereabove, the second equality follows from the properties of $\exp_{m}$ and the definition of $\piio$.
\end{proof}

\begin{rem}
Given a Riemannian submersion $\pi : \G \lra \B$, the expression of the logarithm map of $\B$ can not be deduced in general from that of $\G$. Thus, we do not obtain a closed form for the logarithm in $\F$. However, algorithms for approaching it exist: See \cite{Le Brigant Arnaudon and Barbaresco 2017} or \cite{Thanwerdas and Pennec 2021} for the general case and \cite{Nguyen 2022} for the specific case of $\F$. 
\end{rem}

\subsection{Levi-Civita connection and curvature of flag manifolds}

Let $X, Y \in \X(\F)$ and $\cP=(P_{i})_{i} \in \F$. For all $1 \leq i \leq r$, set $X_{i}(\cP) := \left( X(\cP) \right)_{i} \in T_{P_{i}}G_{i}$. Set 
\begin{equation*}
\Omega_{\cP}\left(X, Y \right) := \sum\limits_{i, j=1}^{N} \lb \lb X_{i}(\cP) , P_{i} \rb , \lb Y_{j}(\cP) , P_{j} \rb  \rb \in \son. 
\end{equation*}

\noindent
In order to derive the Levi-Civita connection of $\F$ from Theorem $\ref{finalLCCapplied}$, we perform preliminary computations. By Lemma $\ref{diffPi}$, with the notations of Theorem $\ref{finalLCCapplied}$, for any $\beta \in \F$ and $\Omega \in \son$,
\begin{equation*}
\phi_{\beta}(\Omega) := Q_{\beta} \cdot \left[ \left( d_{e}\pii \right)(\Omega) \right] = \left( Q_{\beta} \lb \Omega , P_{0}^{i} \rb Q_{\beta}^{T} \right)_{i}.
\end{equation*}

\noindent
On the one hand, by Proposition $\ref{horLiftFlag}$, for all $\cP \in \V$,
\begin{equation*}
V_{\beta}(\cP) := \cR_{Q_{\cP}^{-1}}\left[ \left(Y_{\beta}(\cP) \right)^{\sharp}_{Q_{\cP}} \right] 
= \frac{1}{2} \sum\limits_{j=1}^{r} \lb \left( Y_{\beta}(\cP) \right)_{j}, \cP_{j} \rb. 
\end{equation*}

\noindent
Therefore, 
\begin{align}
\left( \phi_{\beta} \circ V_{\beta} \right)(\cP) &= \left( Q_{\beta} \lb V_{\beta}(\cP) , P_{0}^{i} \rb Q_{\beta}^{T} \right)_{i} \\
&= \left( Q_{\beta} \lb \frac{1}{2} \sum\limits_{j=1}^{r} \lb \left( Y_{\beta}(\cP) \right)_{j}, \cP_{j} \rb ~,~ P_{0}^{i} \rb Q_{\beta}^{T} \right)_{i} \\
&= \frac{1}{2} \left( \lb \sum\limits_{j=1}^{r} \lb Q_{\beta} \left( Y_{\beta}(\cP) \right)_{j} Q_{\beta}^{T} , Q_{\beta} \cP_{j} Q_{\beta}^{T} \rb ~,~ Q_{\beta}P_{0}^{i}Q_{\beta}^{T} \rb \right)_{i} \\
\label{phiRondVFlag} &= \frac{1}{2} \left( \lb \sum\limits_{j=1}^{r} \lb Y_{j}\left( Q_{\beta} \cdot \cP \right) , \left( Q_{\beta} \cdot \cP \right)_{j} \rb ~,~ \beta_{i} \rb \right)_{i}.
\end{align}

\noindent
The last line hereabove holds because, by definition of $Y_{\beta}$, for all $j$, 
\begin{equation*}
Q_{\beta} \left( Y_{\beta}(\cP) \right)_{j} Q_{\beta}^{T} = \left( Q_{\beta} \cdot Y_{\beta}(\cP) \right)_{j} 
= \left( Q_{\beta} \cdot \left(Q_{\beta}^{-1}\right) \cdot \left[ Y(Q_{\beta} \cdot \cP) \right] \right)_{j} = \left( Y(Q_{\beta} \cdot \cP) \right)_{j}
=: Y_{j}\left( Q_{\beta} \cdot \cP \right).
\end{equation*} 

\noindent
On the other hand, for $X, Y \in \X(\F)$ and $\beta=(\beta_{i})_{i} \in \F$, 
\begin{equation}\label{UbetaFlag}
U^{\beta}_{0} = \frac{1}{4} Q_{\beta}^{T} \Omega_{\beta}(X, Y) Q_{\beta}
\quad \textrm{and} \quad 
\phi_{\beta} \left( U^{\beta}_{0} \right) = \frac{1}{4} \left( \lb \Omega_{\beta}(X, Y) , \beta_{i} \rb \right)_{i}. 
\end{equation}

\noindent
We deduce hereafter an expression of the Levi-Civita connection of $\F$ that is \textit{independent} of $Q_{\beta}$.

\begin{theo}\label{StatementTheoLCCflag}
Let $X, Y \in \X(\F)$. Then, for any $\beta=(\beta_{i})_{i} \in \F$,  
\begin{equation}\label{LCCFlagTheo}
\left( \nabla^{\F}_{X} Y \right)_{\beta} = \left( \lb \Omega'(\beta) - \frac{1}{8} \Omega_{\beta}(X, Y) ~,~  \beta_{i} \rb \right)_{i} 
\end{equation}

\noindent
where 
\begin{equation*}
\Omega'(\beta) = \frac{1}{2} \at{\frac{d}{dt}}{t=0} \left( \sum\limits_{j=1}^{r} \lb Y_{j}\left( \Gamma(t) \right) , \left( \Gamma(t) \right)_{j} \rb \right) \in \son
\end{equation*}

\noindent
and $\Gamma$ is the geodesic in $\F$ through $\beta$ in the direction $X(\beta)$.
\end{theo}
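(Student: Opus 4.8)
The plan is to specialize the general formula of Theorem~\ref{finalLCCapplied} to the concrete setting $\B = \F$, $\G = SO(n)$, using the explicit descriptions of the differential of $\pii$ (Lemma~\ref{diffPi}), the horizontal lift (Proposition~\ref{horLiftFlag}) and the bracket identity \eqref{horPii} that were already established. First I would recall from Theorem~\ref{finalLCCapplied}$(i)$ that
\begin{equation*}
\left( \nabla^{\F}_{X} Y \right)_{\beta} = \left( d_{b_{0}}\left( \phi_{\beta} \circ V_{\beta} \right) \right)\left( X_{\beta}(b_{0}) \right) - \frac{1}{2} \phi_{\beta}\left( U^{\beta}_{0} \right),
\end{equation*}
and plug in the two computations performed just above the statement: the expression \eqref{phiRondVFlag} for $\left( \phi_{\beta} \circ V_{\beta} \right)(\cP)$ and the expression \eqref{UbetaFlag} for $\phi_{\beta}(U^{\beta}_{0})$. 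The point of \eqref{phiRondVFlag} is that, after conjugating by $Q_\beta$, the map $\phi_\beta \circ V_\beta$ is rewritten purely in terms of $Y$ evaluated along the curve $t \mapsto Q_\beta \cdot \cP(t)$ and the moving flag $\left(Q_\beta \cdot \cP\right)_j$, with no residual explicit dependence on $Q_\beta$ except through the basepoint; this is what ultimately makes the final formula intrinsic.

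Next I would differentiate. By Theorem~\ref{finalLCCapplied}$(i)$ the derivative $d_{b_0}(\phi_\beta \circ V_\beta)$ applied to $X_\beta(b_0)$ equals $\frac{d}{dt}\big|_{t=0}(\phi_\beta \circ V_\beta)(\gamma(t))$, where $\gamma$ is the geodesic in $\F$ through $b_0=\cP_0$ in the direction $X_\beta(b_0)$. Since $\cL^{\F}_{Q_\beta}$ is an isometry sending $\cP_0$ to $\beta$ and $X_\beta(b_0)$ to $X(\beta)$, it maps $\gamma$ to the geodesic $\Gamma$ in $\F$ through $\beta$ in the direction $X(\beta)$; thus $Q_\beta \cdot \gamma(t) = \Gamma(t)$. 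Feeding this into \eqref{phiRondVFlag} turns $\frac{d}{dt}\big|_{t=0}(\phi_\beta \circ V_\beta)(\gamma(t))$ into
\begin{equation*}
\frac{1}{2}\left( \lb \at{\frac{d}{dt}}{t=0}\sum_{j=1}^{r}\lb Y_j(\Gamma(t)), (\Gamma(t))_j \rb ~,~ \beta_i \rb \right)_i = \left( \lb \Omega'(\beta), \beta_i \rb \right)_i,
\end{equation*}
using bilinearity of the bracket and the fact that $\beta_i$ is constant in $t$ so it passes outside the derivative. For the second term, \eqref{UbetaFlag} gives $\frac{1}{2}\phi_\beta(U^\beta_0) = \frac{1}{8}\left(\lb \Omega_\beta(X,Y), \beta_i\rb\right)_i$. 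Combining the two and factoring the common outer bracket $\lb \cdot, \beta_i \rb$ yields exactly \eqref{LCCFlagTheo}.

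I expect the main obstacle to be bookkeeping rather than conceptual: carefully justifying the interchange of the $t$-derivative with the (finite) sum over $j$ and with the outer bracket $\lb\cdot,\beta_i\rb$, and checking that the conjugation identities $Q_\beta \left(Y_\beta(\cP)\right)_j Q_\beta^T = Y_j(Q_\beta\cdot\cP)$ and $Q_\beta\cdot\gamma(t)=\Gamma(t)$ are applied with the correct basepoint and direction (the pushforward of $X_\beta(b_0)$ under $d\cL^{\F}_{Q_\beta}$ must genuinely be $X(\beta)$, which follows from the definition of $X_\beta$ as the pushforward of $X$ by $\cL^{\F}_{Q_\beta^{-1}}$). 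One should also record that $\Gamma$ is well-defined for $t$ near $0$ and takes values in $\F$, which is immediate from Proposition~\ref{GeoExpMap}, and that $\Omega'(\beta)\in\son$, which holds because each $\lb Y_j(\Gamma(t)),(\Gamma(t))_j\rb$ is skew-symmetric (bracket of symmetric matrices) and $\son$ is a closed subspace, so its tangent vectors lie in $\son$. Once these routine verifications are in place, the formula \eqref{LCCFlagTheo} drops out directly from Theorem~\ref{finalLCCapplied} and the preliminary computations \eqref{phiRondVFlag}–\eqref{UbetaFlag}.
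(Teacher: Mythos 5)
Your proposal is correct and follows essentially the same route as the paper's proof: specialize Theorem~\ref{finalLCCapplied}$(i)$, plug in the precomputed expressions $(\ref{phiRondVFlag})$ and $(\ref{UbetaFlag})$, and use the isometry $\cL^{\B}_{Q_{\beta}}$ to transport the geodesic $\gamma$ through $\cP_{0}$ in direction $X_{\beta}(\cP_{0})$ to the geodesic $\Gamma$ through $\beta$ in direction $X(\beta)$, i.e.\ $Q_{\beta}\cdot\gamma(t)=\Gamma(t)$. The routine verifications you flag (linearity of the outer bracket in $t$, skew-symmetry of $\lb Y_j,(\Gamma)_j\rb$, and the identity $Q_{\beta}\cdot[X_{\beta}(\cP_0)]=X(\beta)$) are exactly the ones the paper silently absorbs into the derivation of $(\ref{phiRondVFlag})$, so nothing substantive is missing.
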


\begin{proof}
By $(i)$ of Theorem $\ref{finalLCCapplied}$,
\begin{equation}\label{LCCFlagLem}
\left( \nabla^{\F}_{X} Y \right)_{\beta} = \left( \at{\frac{d}{dt}}{t=0} \left( \phi_{\beta} \circ V_{\beta} \right)(\gamma(t)) \right) - \frac{1}{2} \phi_{\beta}\left( U^{\beta}_{0} \right), 
\end{equation}

\noindent
where $\gamma$ is the geodesic in $\F$ through $\cP_{0}$ in the direction $X_{\beta}\left(\cP_{0}\right)$. By $(\ref{phiRondVFlag})$, 
\begin{equation*}
\left( \phi_{\beta} \circ V_{\beta} \right) \left(\gamma(t) \right) = 
\frac{1}{2} \left( \lb \sum\limits_{j=1}^{r} \lb Y_{j}\left( Q_{\beta} \cdot \gamma(t) \right) , \left( Q_{\beta} \cdot \gamma(t) \right)_{j} \rb ~,~ \beta_{i} \rb \right)_{i}.
\end{equation*}

\noindent
 Now, $\cL^{\B}_{Q_{\beta}}$ is an isometry of $\F$. So, $Q_{\beta} \cdot \gamma(t) = \Gamma(t)$, where $\Gamma$ is the geodesic in $\F$ through $Q_{\beta} \cdot \cP_{0} = \beta$ in the direction $Q_{\beta} \cdot \left[ X_{\beta}\left(\cP_{0}\right) \right] = X(Q_{\beta} \cdot \cP_{0}) = X(\beta)$. Finally, we deduce $(\ref{LCCFlagTheo})$ by combining $(\ref{LCCFlagLem})$ and $(\ref{UbetaFlag})$. 
\end{proof}

\noindent\\
Finally, the sectional curvature of $\F$ is derived from the results of Section $\ref{sect3}$ as follows.

\begin{theo}
Let $X, Y \in \X(\F)$. Then, for any $\beta=(\beta_{i})_{i} \in \F$,  
\begin{equation}\label{CurvaFlagTheo}
K^{\F}_{\beta}(X, Y) = \frac{1}{16} \left( \left\| \Omega_{\beta}(X, Y) \right\|^{2} - \frac{3}{16} \left\| \sum\limits_{i=1}^{r} \lb \lb \Omega_{\beta}(X, Y) , \beta_{i} \rb , \beta_{i} \rb \right\|^{2}
\right)
\end{equation}
\end{theo}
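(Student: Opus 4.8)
The strategy is to apply Proposition \ref{propFinalCurva}$(i)$, which already gives
\[
K^{\F}_{\beta}(X,Y) = \left\| \left(U^{\beta}_{0}\right)_{\fk} \right\|^{2} + \frac{1}{4}\left\| \left(U^{\beta}_{0}\right)_{\fm} \right\|^{2},
\]
where in the flag setting $U^{\beta}_{0} = \frac{1}{4} Q_{\beta}^{T}\Omega_{\beta}(X,Y)Q_{\beta}$ by $(\ref{UbetaFlag})$. So the whole proof reduces to computing the vertical part $\left(U^{\beta}_{0}\right)_{\fk}$ and the horizontal part $\left(U^{\beta}_{0}\right)_{\fm}$ of this element of $\son$, and then re-expressing the resulting norms in a form that is independent of the choice of $Q_{\beta}$.

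First I would reduce to the origin. Since $\cL^{\F}_{Q_\beta}$ is an isometry and conjugation by $Q_\beta$ preserves the Frobenius norm, it suffices to compute the horizontal/vertical decomposition of $\Omega_{\beta}(X,Y)$ relative to the splitting $\son = \fk \oplus \fm$ at the origin, i.e. with respect to the block structure from Lemma \ref{diffPi}. Write $\Omega := \Omega_{\beta}(X,Y)$. The vertical space $\fk$ consists of block-diagonal skew matrices $\mathrm{Diag}[\Omega_{11},\dots,\Omega_{rr}]$ and $\fm$ of skew matrices with vanishing diagonal blocks. Lemma \ref{PrelimHorLift} gives the horizontal projection explicitly: $\Omega_{\fm} = \frac{1}{2}\sum_{i=1}^{r}\lb\lb\Omega,P_0^i\rb,P_0^i\rb$. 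Conjugating back, $\sum_{i=1}^r \lb\lb Q_\beta^T\Omega Q_\beta, P_0^i\rb,P_0^i\rb = Q_\beta^T\big(\sum_{i=1}^r\lb\lb\Omega, \beta_i\rb,\beta_i\rb\big)Q_\beta$, so the horizontal part of $U^\beta_0$ is $\frac{1}{8}Q_\beta^T\big(\sum_i\lb\lb\Omega_\beta(X,Y),\beta_i\rb,\beta_i\rb\big)Q_\beta$; taking norms kills the conjugation and contributes the term $\frac14\cdot\frac{1}{64}\|\sum_i\lb\lb\Omega_\beta(X,Y),\beta_i\rb,\beta_i\rb\|^2 = \frac{1}{256}\|\cdots\|^2$.

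Next, for the vertical part I would write $\left(U^\beta_0\right)_{\fk} = U^\beta_0 - \left(U^\beta_0\right)_{\fm} = \frac14 Q_\beta^T\Omega Q_\beta - \frac18 Q_\beta^T(\text{double-bracket sum})Q_\beta$, and use the orthogonality of the decomposition to get $\|\left(U^\beta_0\right)_{\fk}\|^2 = \|U^\beta_0\|^2 - \|\left(U^\beta_0\right)_{\fm}\|^2 = \frac{1}{16}\|\Omega_\beta(X,Y)\|^2 - \frac{1}{256}\|\sum_i\lb\lb\Omega_\beta(X,Y),\beta_i\rb,\beta_i\rb\|^2$. Adding $\|\left(U^\beta_0\right)_{\fk}\|^2 + \frac14\|\left(U^\beta_0\right)_{\fm}\|^2$ then gives
\[
K^{\F}_{\beta}(X,Y) = \frac{1}{16}\|\Omega_\beta(X,Y)\|^2 - \frac{1}{256}\Big\|\sum_i\lb\lb\Omega_\beta(X,Y),\beta_i\rb,\beta_i\rb\Big\|^2 + \frac{1}{256}\Big\|\sum_i\lb\lb\Omega_\beta(X,Y),\beta_i\rb,\beta_i\rb\Big\|^2,
\]
wait — this would cancel, so I must be more careful: the coefficient in front of the double-bracket norm inside $\|\left(U^\beta_0\right)_{\fk}\|^2$ is $-\frac{1}{256}$ but it enters $K$ with weight $1$, whereas the horizontal norm enters with weight $\frac14$, giving a net coefficient $-\frac{1}{256}+\frac14\cdot\frac{1}{64} = -\frac{1}{256}+\frac{1}{256}=0$ only if the $\fm$-norm had coefficient $\frac{1}{64}$; in fact $\|\left(U^\beta_0\right)_{\fm}\|^2 = \frac{1}{64}\|\text{sum}\|^2$, so $K = \frac{1}{16}\|\Omega_\beta\|^2 - \frac{1}{64}\|\text{sum}\|^2 + \frac14\cdot\frac{1}{64}\|\text{sum}\|^2 = \frac{1}{16}\|\Omega_\beta\|^2 - \frac{3}{4}\cdot\frac{1}{64}\|\text{sum}\|^2 = \frac{1}{16}\big(\|\Omega_\beta(X,Y)\|^2 - \frac{3}{16}\|\sum_i\lb\lb\Omega_\beta(X,Y),\beta_i\rb,\beta_i\rb\|^2\big)$, which is exactly $(\ref{CurvaFlagTheo})$.

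\textbf{Main obstacle.} The delicate point is bookkeeping the numerical constants and, more substantively, justifying that $\Omega_{\fm} = \frac12\sum_i\lb\lb\Omega,P_0^i\rb,P_0^i\rb$ really is the \emph{orthogonal} projection of $\Omega$ onto $\fm$ (equivalently that $\Omega - \Omega_{\fm}$ is block-diagonal), which is exactly the content of Lemma \ref{PrelimHorLift} and must be invoked cleanly; one also needs $\|U^\beta_0\|^2 = \frac{1}{16}\|\Omega_\beta(X,Y)\|^2$, which follows since conjugation by the orthogonal matrix $Q_\beta$ is a Frobenius isometry. Everything else is the orthogonal Pythagorean identity $\|U\|^2 = \|U_{\fk}\|^2 + \|U_{\fm}\|^2$ applied to $U = U^\beta_0$, combined with the computation of $U^\beta_0{}_{\fm}$ above.
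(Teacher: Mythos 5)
Your proposal is correct and follows essentially the same route as the paper: start from Proposition~\ref{propFinalCurva}$(i)$, apply the Pythagorean identity $\|U^\beta_0\|^2 = \|(U^\beta_0)_\fk\|^2 + \|(U^\beta_0)_\fm\|^2$ to rewrite the curvature as $\|U^\beta_0\|^2 - \frac{3}{4}\|(U^\beta_0)_\fm\|^2$, then substitute $U^\beta_0 = \frac{1}{4}Q_\beta^T\Omega_\beta(X,Y)Q_\beta$ and invoke Lemma~\ref{PrelimHorLift} (together with the fact that conjugation by $Q_\beta$ is a Frobenius isometry commuting with the double bracket) to express the horizontal part. The arithmetic slip you flagged mid-argument was caught and corrected, and the extra step you spell out --- that the double-bracket sum conjugates to $\sum_i\lb\lb\Omega_\beta(X,Y),\beta_i\rb,\beta_i\rb$ --- is exactly the computation the paper leaves implicit in the phrase ``we conclude by Lemma~\ref{PrelimHorLift}.''
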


\begin{proof}
By Proposition $\ref{propFinalCurva}$ and the Pythagorean theorem, 
\begin{equation*}
K^{\F}_{\beta}(X, Y) = \left\| U^{\beta}_{0} \right\|^{2} - \frac{3}{4} \left\| \left(U^{\beta}_{0} \right)_{\fm} \right\|^{2}
\end{equation*}

\noindent
Recall from $(\ref{UbetaFlag})$ that for $\F$, $U^{\beta}_{0} = \frac{1}{4} Q_{\beta}^{T} \Omega_{\beta}(X, Y) Q_{\beta}$. Then, we conclude by Lemma $\ref{PrelimHorLift}$, which provides the expression of $\left(U^{\beta}_{0} \right)_{\fm}$ in function of $U^{\beta}_{0}$. 
\end{proof}

\subsection{The case of Grassmannians}

As pointed out in Section $\ref{sect2}$, the Bracket Condition holds for $\Gr$, since $\Gr$ is additionally a \textit{symmetric space}. This result can also be checked by direct computations from the explicit expressions of the spaces $\fk$ and $\fm$ in the case of $\Gr$ given in \cite{Bendokat Zimmermann and Absil 2020}. As for $\F$, we derive the expression of the Levi-Civita connection of $\Gr$ by applying Theorem $\ref{finalLCCapplied}$. By results of \cite{Bendokat Zimmermann and Absil 2020}, with the notations of Theorem $\ref{finalLCCapplied}$, for any $\beta \in \Gr$ and $P \in \V$,
\begin{equation*}
\left( \phi_{\beta} \circ V_{\beta} \right)(P) = \lb \lb Y( Q_{\beta} \cdot P) , Q_{\beta} \cdot P \rb, \beta \rb.
\end{equation*}

\noindent
Thus, by applying $(ii)$ of Theorem $\ref{finalLCCapplied}$, we obtain hereafter the Levi-Civita connection of $\Gr$.

\begin{theo}
Let $X, Y \in \X(\Gr)$. Then, for any $\beta \in \Gr$,  
\begin{equation}\label{LCCGrassTheo}
\left( \nabla^{\Gr}_{X} Y \right)_{\beta} = \lb \Omega'(\beta) , \beta \rb 
\quad \textrm{where} \quad \Omega'(\beta) = \at{\frac{d}{dt}}{t=0} \lb Y(\Gamma(t)) , \Gamma(t) \rb, 
\end{equation}

\noindent
and $\Gamma$ is the geodesic in $\Gr$ through $\beta$, in the direction $X(\beta)$.
\end{theo}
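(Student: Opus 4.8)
## Proof Proposal for the Levi-Civita Connection of Grassmannians

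The plan is to specialize Theorem \ref{finalLCCapplied}(ii) to the case $\B = \Gr$, $\G = SO(n)$, with origin $\cP_0 = P_0^1 = \Dg[I_q, 0_{n-q}]$. The main point is that $\Gr$ is a \emph{symmetric space}, hence satisfies the Bracket Condition $(\ref{BracketCondB})$, so by part (ii) of Theorem \ref{finalLCCapplied} the term $-\tfrac12 \phi_\beta(U_0^\beta)$ vanishes and we are left with
\begin{equation*}
\left( \nabla^{\Gr}_{X} Y \right)_{\beta} = \left( d_{b_0}(\phi_\beta \circ V_\beta) \right)(X_\beta(b_0)) = \at{\frac{d}{dt}}{t=0} (\phi_\beta \circ V_\beta)(\gamma(t)),
\end{equation*}
where $\gamma$ is the geodesic in $\Gr$ through $P_0^1$ in the direction $X_\beta(P_0^1)$. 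So the whole content is to compute the composite map $\phi_\beta \circ V_\beta$ explicitly in the projector representation of \cite{Bendokat Zimmermann and Absil 2020}, and then to re-express the resulting $t$-derivative in an intrinsic, $Q_\beta$-free form.

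First I would recall from \cite{Bendokat Zimmermann and Absil 2020} that for $\Gr$ (the $r=2$ case, where a flag $(P_1, P_2)$ is determined by $P := P_1$ since $P_2 = I - P_1$) the horizontal lift of $\Delta \in T_P\Gr$ at $Q \in (\piio)^{-1}(P)$ is $\Delta^\sharp_Q = \lb\lb \Delta, P\rb, P\rb Q$, equivalently $\cR_{Q^{-1}}[\Delta^\sharp_Q] = \lb\lb \Delta, P \rb, P\rb$; and that $\phi_\beta(\Omega) = Q_\beta \cdot [(d_e\piio)(\Omega)] = \lb \Omega, \beta \rb$ in the projector model, by Lemma \ref{diffPi}. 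Combining, and using the same computation as in the flag case (conjugating by $Q_\beta$ and using that $\cL^\B_{Q_\beta}$ intertwines $Y_\beta$ with $Y$), one gets $V_\beta(P) = \lb\lb Y_\beta(P), P\rb, P\rb$ and hence the stated identity $(\phi_\beta \circ V_\beta)(P) = \lb \lb Y(Q_\beta \cdot P), Q_\beta \cdot P \rb, \beta \rb$. This is precisely the $r=2$ specialization of the computation $(\ref{phiRondVFlag})$ already carried out for $\F$, so it can be quoted rather than redone.

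Next, I would differentiate at $t=0$ along $\gamma$. As in the proof of Theorem \ref{StatementTheoLCCflag}, since $\cL^\B_{Q_\beta}$ is an isometry, $Q_\beta \cdot \gamma(t) = \Gamma(t)$, the geodesic in $\Gr$ through $Q_\beta \cdot \cP_0 = \beta$ in the direction $Q_\beta \cdot [X_\beta(\cP_0)] = X(\beta)$. Because $\phi_\beta(\Omega) = \lb \Omega, \beta \rb$ is \emph{linear} in $\Omega$ and $\beta$ is fixed, the derivative passes through the outer bracket:
\begin{equation*}
\at{\frac{d}{dt}}{t=0}(\phi_\beta \circ V_\beta)(\gamma(t)) = \lb \at{\frac{d}{dt}}{t=0} \lb Y(\Gamma(t)), \Gamma(t)\rb, \beta \rb = \lb \Omega'(\beta), \beta\rb,
\end{equation*}
with $\Omega'(\beta) := \at{\frac{d}{dt}}{t=0} \lb Y(\Gamma(t)), \Gamma(t)\rb \in \son$. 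One should check that $\Omega'(\beta)$ is genuinely skew-symmetric — this follows since $\lb Y(\Gamma(t)), \Gamma(t)\rb$ is the commutator $Y(\Gamma(t))\Gamma(t) - \Gamma(t)Y(\Gamma(t))$ of symmetric matrices, hence skew for every $t$, so its derivative is too — and also that, although both $V_\beta$ and $\gamma$ were defined via the auxiliary element $Q_\beta$, the final expression depends only on $\beta$, $X(\beta)$, $Y$ and the intrinsic geodesic $\Gamma$; this is immediate from the formula just displayed.

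The only real obstacle is bookkeeping: verifying that the $r=2$ instance of $(\ref{phiRondVFlag})$ together with the vanishing-term statement of Theorem \ref{finalLCCapplied}(ii) genuinely applies here, i.e.\ confirming that the representation of $\Gr$ by orthogonal projectors used in \cite{Bendokat Zimmermann and Absil 2020} is exactly the realization $\B = G_q$ of $SO(n)/SO(q, n-q)$ to which Section \ref{sect3} applies, and that the Bracket Condition $\lb \fm, \fm\rb \subset \fk$ indeed holds — which is guaranteed abstractly since $\Gr$ is a (normal) symmetric space, as noted in Section \ref{sect2}, and can be double-checked from the explicit $\fk = \{\Dg[A, B] : A \in \so(q), B \in \so(n-q)\}$, $\fm = \{\text{off-diagonal blocks}\}$, for which $\lb\fm,\fm\rb$ is block-diagonal. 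Once these identifications are in place, the formula $(\ref{LCCGrassTheo})$ drops out with no further computation.
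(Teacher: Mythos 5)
Your proof follows the paper's argument exactly: invoke Theorem~\ref{finalLCCapplied}(ii) (the Bracket Condition holds since $\Gr$ is a symmetric space), specialize the composite $(\phi_\beta \circ V_\beta)$ to the $r=2$ case of~$(\ref{phiRondVFlag})$, then transfer $\gamma$ to $\Gamma$ via the isometry $\cL^{\B}_{Q_\beta}$ and pull the bracket with $\beta$ out of the $t$-derivative by linearity. One caveat: the two intermediate formulas you quote are slightly off --- the $r=2$ case of Proposition~\ref{horLiftFlag} gives $\Delta^{\sharp}_{Q}=\lb \Delta, P\rb Q$, not $\lb\lb\Delta,P\rb,P\rb Q$ (the latter equals $\Delta Q$ and is not horizontal), and $\phi_\beta(\Omega)=\lb Q_\beta\Omega Q_\beta^{T},\beta\rb$ rather than $\lb\Omega,\beta\rb$ --- but these slips offset and your endpoint $(\phi_\beta\circ V_\beta)(P)=\lb\lb Y(Q_\beta\cdot P),Q_\beta\cdot P\rb,\beta\rb$, which you correctly attribute to the $r=2$ specialization of~$(\ref{phiRondVFlag})$, is right, so the argument stands.
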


\noindent\\
We readily check that the sectional curvature of $\Gr$ derived from our method coincides with that obtained in \cite{Bendokat Zimmermann and Absil 2020}.

\subsection{Explicit local section of the map $\pii$}

Since a closed form for the  Riemannian Logarithm on $\F$ is not available, we can not derive explicit local sections of $\pii$  by applying Corollary $\ref{corLocSection}$ with $\B=\F$. We overcome this lack by using the embedding of $\F$ into $G^{\I}$. Indeed, closed forms for the cut locus and the Logarithm map on $\Gr$, are available: See $(\ref{CutGrass})$ and $(\ref{LogGrass})$ below.

\subsubsection{Preliminaries}

For $1 \leq q \leq n$, let $\St_{q}=\left\{ U \in \R^{n \times q} : U^{T}U = I_{q} \right\}$ be the Stiefel manifold of $q$-frames of $\R^{n}$. Let $\pi^{SG}_{q} : \St_{q} \lra \Gr$ be the \textit{surjective} map which associates to a $q$-frame $U$, the projection onto the linear subspace that it generates. Namely, $\pi^{SG}_{q}\left(U\right)=UU^{T}$. Then, the \textit{cut locus} of any $P=UU^{T} \in \Gr$, with $U \in \St_{q}$, is  
\begin{equation}\label{CutGrass}
\Ct(P) = \left\{ R=YY^{T} \in \Gr : \rk\left( U^{T}Y \right)<q \right\}. 
\end{equation}

\noindent
In other words, $R \in \Ct(P)$ iff at least one \textit{principal angle} between $P$ and $R$ is equal to $\frac{\pi}{2}$. Furthermore, the Logarithm map on $\Gr$ is provided by Theorem $3.3$ in \cite{Batzies Huper Machado and Silva Leite 2015}: For any $P \in \Gr$ and $R \in G_{q} \setminus \Ct(P)$, 
\begin{equation}\label{LogGrass}
\Log_{P}^{\Gr} \left(R\right) = \lb \Omega_{P}(R), P \rb
\quad \textrm{with} \quad \Omega_{P}(R) = \frac{1}{2} \log_{m} \left( (I_{n}-2R)(I_{n}-2P) \right) \in \son.  
\end{equation}

\noindent
In the sequel, $\St_{q}$ is endowed with the Euclidean metric $g^{\St}$ defined by
\begin{equation*}
g^{\St}_{U}\left( \cD_{1}, \cD_{2} \right)=\tr \left( \cD_{1}^{T} \cD_{2} \right), \quad U \in \St_{q} \textrm{~and~} \cD_{1}, \cD_{2} \in T_{U}\St_{q}. 
\end{equation*}

\begin{lem}\label{piSG}
$(i)$ For $P \in \Gr$, $U \in \left( \pi^{SG}_{q} \right)^{-1}\left(P\right)$ and $\Delta \in T_{P}\Gr$, the horizontal lift of $\Delta$ at $U$ wrt $\left(\pi^{SG}_{q}, g^{\St}\right)$ is 
\begin{equation}\label{horLiftSG}
\Delta^{\sharp}_{U}=\Delta U.  
\end{equation} 

\noindent
$(ii)$ $\pi^{SG}_{q}$ is a Riemannian submersion from $\left( \St_{q}, g^{\St} \right)$ onto $\left( \Gr, g^{\Gr} \right)$. 
\end{lem}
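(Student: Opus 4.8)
The plan is to verify both assertions by explicit computation using the parametrization of the Stiefel manifold and the standard description of its tangent spaces. First I would recall that, for $U \in \St_{q}$, the tangent space $T_{U}\St_{q}$ consists of all matrices $\cD \in \R^{n \times q}$ satisfying $U^{T}\cD + \cD^{T}U = 0$; equivalently, writing $\cD = U A + U_{\perp}B$ with $A \in \son[q]$ skew-symmetric and $B \in \R^{(n-q) \times q}$ arbitrary (where $U_{\perp}$ completes $U$ to an orthogonal basis of $\R^{n}$). The vertical space of $\pi^{SG}_{q}$ at $U$ is $\ker\left( d_{U}\pi^{SG}_{q} \right)$, which I would compute by differentiating $\pi^{SG}_{q}(U) = UU^{T}$ along a curve: $\left( d_{U}\pi^{SG}_{q} \right)(\cD) = \cD U^{T} + U \cD^{T}$. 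Plugging in $\cD = UA + U_{\perp}B$, the term $\cD U^{T} + U\cD^{T}$ vanishes exactly when $B = 0$, so the vertical space is $\left\{ UA : A \in \son[q] \right\}$ and the horizontal space (wrt $g^{\St}$) is its orthogonal complement, namely $\left\{ U_{\perp}B : B \in \R^{(n-q)\times q} \right\} = \left\{ \Delta U : \Delta \in T_{P}\Gr \right\}$, using that tangent vectors $\Delta$ to $\Gr$ at $P = UU^{T}$ are precisely the symmetric matrices of the form $U_{\perp}B U^{T} + U B^{T} U_{\perp}^{T}$, so that $\Delta U = U_{\perp}B$.

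For assertion $(i)$, it then remains to check that $\Delta U$ is the correct horizontal lift, i.e. that $\Delta U$ is horizontal and that $\left( d_{U}\pi^{SG}_{q} \right)(\Delta U) = \Delta$. Horizontality was just established. For the second point, I would compute $\left( d_{U}\pi^{SG}_{q} \right)(\Delta U) = (\Delta U) U^{T} + U (\Delta U)^{T} = \Delta U U^{T} + U U^{T} \Delta = \Delta P + P \Delta$; using that $\Delta \in T_{P}\Gr$ satisfies $\Delta P + P\Delta = \Delta$ (which follows from $P^{2} = P$ by differentiating, i.e. $\Delta P + P\Delta = \Delta$ for tangent vectors to the projector variety), this equals $\Delta$, as required. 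For assertion $(ii)$, Riemannian submersion means $\left( d_{U}\pi^{SG}_{q} \right)$ restricted to the horizontal space is a linear isometry onto $\left( T_{P}\Gr, g^{\Gr}_{P} \right)$. Since $g^{\Gr}$ is by definition (via \cite{Bendokat Zimmermann and Absil 2020}) the metric making the main orbital map of $\Gr$ a Riemannian submersion, I would instead verify compatibility directly: for $\Delta, \Delta' \in T_{P}\Gr$, compute $g^{\St}_{U}\left( \Delta U, \Delta' U \right) = \tr\left( U^{T}\Delta^{T}\Delta' U \right) = \tr\left( \Delta^{T}\Delta' U U^{T} \right) = \tr\left( \Delta \Delta' P \right)$ (using $\Delta^{T} = \Delta$) and check this coincides with the expression of $g^{\Gr}_{P}(\Delta, \Delta')$; using $\Delta = \Delta P + P\Delta$ one rewrites $\tr(\Delta\Delta' P)$ symmetrically as $\tfrac{1}{2}\tr(\Delta\Delta')$ up to the standard normalization, matching the induced metric. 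Surjectivity of $\pi^{SG}_{q}$ having already been noted, and $d_{U}\pi^{SG}_{q}$ being surjective onto $T_{P}\Gr$ (its restriction to the horizontal space is a bijection by the dimension count $\dim\St_{q} - \dim\son[q] = q(n-q) = \dim\Gr$), this completes the proof.

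The main obstacle I anticipate is purely bookkeeping: keeping the normalization constants straight between the Euclidean metric $g^{\St}$ (with the plain trace, no factor $\tfrac{1}{2}$) and the metric $g^{\Gr}$ inherited from the orbital-map construction of $\Gr$ in \cite{Bendokat Zimmermann and Absil 2020}, since a spurious factor of $2$ can creep in from the $UU^{T}$ parametrization versus the skew-symmetric generator parametrization. I would resolve this by fixing one explicit tangent vector (e.g. a single off-diagonal elementary direction) and computing both sides by hand to pin down the constant, rather than relying on abstract identities. No genuine conceptual difficulty is expected; the content is that the two natural submersion structures onto $\Gr$ are compatible, which is essentially forced once the horizontal lift formula $(\ref{horLiftSG})$ is correct.
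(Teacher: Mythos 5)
Your proof is correct and takes a genuinely more self-contained route than the paper's. The paper dispatches both assertions by citing \cite{Bendokat Zimmermann and Absil 2020}: for $(i)$ it invokes the observation that $g^{\St}$ and the canonical Stiefel metric share the same horizontal subspaces (so the known horizontal-lift formula for the canonical metric transfers), and for $(ii)$ it reads off the identity $g^{\Gr}_{P}(\Delta,\widetilde{\Delta}) = \tr\bigl( (\Delta^{\sharp}_{U})^{T}\widetilde{\Delta}^{\sharp}_{U} \bigr)$ directly from equation $(3.2)$ of that reference. You instead compute everything from scratch: you differentiate $U \mapsto UU^{T}$ to get $(d_{U}\pi^{SG}_{q})(\cD)=\cD U^{T}+U\cD^{T}$, identify the vertical space as $\{UA : A \text{ skew}\}$ and the $g^{\St}$-horizontal space as $\{U_{\perp}B\} = \{\Delta U : \Delta \in T_{P}\Gr\}$, and verify $(d_{U}\pi^{SG}_{q})(\Delta U) = \Delta P + P\Delta = \Delta$. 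For $(ii)$ you likewise compute $g^{\St}_{U}(\Delta U,\Delta'U)=\tr(\Delta\Delta'P)$ and reduce it to $\tfrac{1}{2}\tr(\Delta\Delta')$ via the tangency relation $\Delta = P\Delta + \Delta P$. This is all sound; the one point you leave implicit, just as the paper does, is the identification of $g^{\Gr}_{P}(\Delta,\Delta')$ with $\tfrac{1}{2}\tr(\Delta\Delta')$, which in this paper's setup is the metric induced by the $SO(n)$-orbital map and ultimately comes from the same reference. The trade-off is: the paper's proof is shorter and leans on a handbook reference for both the horizontal-space comparison and the metric formula; yours is longer but makes the horizontal lift and the normalization constant verifiable on the spot, which, as you note, is the main place a spurious factor of $2$ could enter.
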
 

\begin{proof}
See Appendix. 
\end{proof}

\noindent
Since $\St_{q}$ is complete and $\Gr$ connected, Lemma $\ref{lemLocSection}$ provides local sections of $\pi^{SG}_{q}$ based on holonomy maps between fibers under $\pi_{q}^{SG}$. Recall that for $P \in \Gr$, $R \in G_{q} \setminus \Ct(P)$ and $U \in \left( \pi^{SG}_{q} \right)^{-1}(P)$, 
\begin{equation}\label{holonomyFormulaSG}
\cH^{\pi^{SG}_{q}}_{(P, R)}(U) = \Exp^{\St}_{U} \left( \left( \Log^{\Gr}_{P}(R) \right)^{\sharp}_{U} \right).
\end{equation}

\noindent
The exponential map on $\left( \St_{q}, g^{\St} \right)$ is provided by Section $2.2.2.$ in \cite{Edelman Arias and Smith 1998}: for $U \in \St_{q}$ and $\cD \in T_{U}\St_{q}$, 
\begin{equation}\label{ExpSt}
\Exp_{U}^{\St_{q}}\left( \cD \right) = V \left( \exp_{m} \begin{pmatrix}
A & -C \\
I_{q} & A
\end{pmatrix}
\right) \begin{pmatrix}
I_{q} \\ 0_{q} 
\end{pmatrix} \exp_{m} \left( - A \right),
\end{equation}

\noindent
where 
\begin{equation*}
V = \begin{pmatrix}
U & \cD 
\end{pmatrix} \in \R^{n \times 2q}
\quad , \quad A=U^{T}\cD \quad , \quad C=\cD^{T}\cD. 
\end{equation*}

\noindent
So, $(\ref{LogGrass})$ and $(\ref{ExpSt})$ provide hereafter an explicit expression for the holonomy map of $(\ref{holonomyFormulaSG})$.

\begin{prop}\label{holSG}
Let $P \in \Gr$ and $R \in G_{q} \setminus \Ct(P)$. For $U \in \left( \pi^{SG}_{q} \right)^{-1}(P)$, set 
\begin{equation*}
\Delta := \Log_{P}^{\Gr} \left(R\right) \quad , \quad
V := \begin{pmatrix}
U & \Delta U 
\end{pmatrix} \in \R^{n \times 2q}
\quad , \quad A:=U^{T}\Delta U 
\quad , \quad C:=U^{T}\Delta^{2}U.
\end{equation*}

\noindent
Then, 
\begin{equation}\label{equHolSG}
\cH^{\pi^{SG}_{q}}_{(P, R)}(U) = V \left( \exp_{m} \begin{pmatrix}
A & -C \\
I_{q} & A
\end{pmatrix}
\right) \begin{pmatrix}
I_{q} \\ 0_{q} 
\end{pmatrix} \exp_{m} \left( - A \right),
\end{equation}
\end{prop}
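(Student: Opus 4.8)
The plan is to read off $(\ref{equHolSG})$ directly from the defining formula $(\ref{holonomyFormulaSG})$ for the holonomy map, by plugging in the closed forms already recorded for the Logarithm on $\Gr$, for the horizontal lift of $\pi^{SG}_{q}$, and for the exponential on $\St_{q}$. Since $\St_{q}$ is compact, hence complete, the Riemannian exponential $\Exp^{\St}_{U}$ is defined on all of $T_{U}\St_{q}$, so $(\ref{ExpSt})$ applies without any restriction on its argument. Thus the whole proof amounts to identifying the single tangent vector $\cD := \left( \Log^{\Gr}_{P}(R) \right)^{\sharp}_{U} \in T_{U}\St_{q}$ and the three matrices $V, A, C$ that $(\ref{ExpSt})$ builds out of it.

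First I would use $(\ref{LogGrass})$, which is legitimate because $R \in G_{q} \setminus \Ct(P)$, to write $\Delta := \Log^{\Gr}_{P}(R) = \lb \Omega_{P}(R), P \rb \in T_{P}\Gr$; then $(i)$ of Lemma $\ref{piSG}$ gives the horizontal lift $\cD = \Delta^{\sharp}_{U} = \Delta U$. Hence $V = \begin{pmatrix} U & \cD \end{pmatrix} = \begin{pmatrix} U & \Delta U \end{pmatrix}$ and $A = U^{T}\cD = U^{T}\Delta U$ coincide with the quantities named in the statement. It remains only to match $C = \cD^{T}\cD$ with $U^{T}\Delta^{2}U$, and this is where the one genuine (if elementary) observation enters: the tangent vector $\Delta$ is a \emph{symmetric} matrix. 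Indeed, $\Omega_{P}(R) \in \son$ is skew-symmetric and $P$ is symmetric, so $\Delta^{T} = \lb \Omega_{P}(R), P \rb^{T} = \Omega_{P}(R) P - P \Omega_{P}(R) = \Delta$; therefore $C = \cD^{T}\cD = (\Delta U)^{T}(\Delta U) = U^{T}\Delta^{T}\Delta U = U^{T}\Delta^{2}U$, as claimed.

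Finally, substituting this $V$, $A$ and $C$ into the Stiefel exponential formula $(\ref{ExpSt})$ turns $\Exp^{\St}_{U}\left( \cD \right)$ into the right-hand side of $(\ref{equHolSG})$, which by $(\ref{holonomyFormulaSG})$ equals $\cH^{\pi^{SG}_{q}}_{(P, R)}(U)$. I do not expect any serious obstacle: the argument is a bookkeeping exercise chaining $(\ref{holonomyFormulaSG})$, $(\ref{LogGrass})$, $(\ref{horLiftSG})$ and $(\ref{ExpSt})$. The only points to be careful about are that the horizontal lift in $(\ref{horLiftSG})$ is taken with respect to the \emph{same} Riemannian submersion $\left( \pi^{SG}_{q}, g^{\St} \right)$ whose exponential appears in $(\ref{ExpSt})$, and that the symmetry of $\Delta$ is what makes $C$ take the compact form $U^{T}\Delta^{2}U$ rather than merely $U^{T}\Delta^{T}\Delta U$.
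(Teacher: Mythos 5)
Your proposal is correct and follows essentially the same route as the paper: substitute the closed forms $(\ref{LogGrass})$, $(\ref{horLiftSG})$ and $(\ref{ExpSt})$ into the holonomy formula $(\ref{holonomyFormulaSG})$. The only thing you spell out that the paper leaves tacit is the symmetry of $\Delta$ (which holds because $T_P\Gr \subset \Sy_n$), needed to rewrite $\cD^{T}\cD = U^{T}\Delta^{T}\Delta U$ as $U^{T}\Delta^{2}U$ — a worthwhile clarification, but not a different argument.
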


\begin{proof}
By $(\ref{horLiftSG})$, $\Delta^{\sharp}_{U} = \left( \Log_{P_{0}}^{\Gr} \left(R\right) \right) U$. Then, we derive $(\ref{equHolSG})$ by applying $(\ref{ExpSt})$ with $\cD=\Delta^{\sharp}_{U}=\Delta U$. 
\end{proof}

\bigskip

\begin{rem}\label{optimality}
With the notations of $(\ref{holonomyFormulaSG})$, let $\gamma(P,R)$ be the minimal geodesic between $P$ and $R$ in $\Gr$. By definition, $\cH^{\pi^{SG}_{q}}_{(P, R)}(U)$ is the endpoint of the horizontal lift through $U$ of $\gamma(P,R)$. Since $\pi^{SG}_{q}$ is a Riemannian submersion, denoting by $d_{g}^{St}$ the geodesic distance in $\left( \St_{q}, g^{\St} \right)$, \cite[Lemma 26.11]{Michor 2008} implies that 
\begin{equation}\label{minGeoDist}
d_{g}^{St} \left( U, \cH^{\pi^{SG}_{q}}_{(P, R)}(U) \right) = \min \left\{ d_{g}^{St} \left( U, W \right) : W \in \left( \pi^{SG}_{q} \right)^{-1}(R) \right\}.  
\end{equation}

\noindent
Thus, among all $q$-frames generating the range of $R$, $\cH^{\pi^{SG}_{q}}_{(P, R)}(U)$ minimizes the geodesic distance in $\St_{q}$ to $U$.
\end{rem}

\subsubsection{The closed form for a section of $\pii$}

Given a type $\I=(q_{i})_{1 \leq i \leq r}$, any $Q \in O(n)$ is partitioned into $q_{i}$-blocks of its columns: 
\begin{equation*}
Q = \begin{pmatrix}
Q^{(1)} & ... & Q^{(i)} & ... & Q^{(r)} 
\end{pmatrix},  
\end{equation*}

\noindent
where for all $1 \leq i \leq r$, $Q^{(i)} \in \St_{q_{i}}$. For all $i$, the map $\pi^{SG}_{q_{i}} : \St_{q_{i}} \lra G_{q_{i}}$ is denoted by $\pi^{i}$. Then, the maps $(\pi^{i})_{1 \leq i \leq r}$ are related to $\pii$ as follows. 

\begin{lem}\label{linkPiS}
For all $Q \in O(n)$, $\pii(Q) = \left( \pi^{i}\left( Q^{(i)} \right) \right)_{1 \leq i \leq r}$.
\end{lem}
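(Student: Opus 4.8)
The plan is to unwind the definitions of $\pii$ and of the block structure of $Q\in O(n)$, and to exhibit, block by block, the equality asserted. Recall from $(\ref{DefStandardFlag})$ that $P_{0}^{i}=\mathrm{Diag}[0_{q_{1}},\dots,I_{q_{i}},\dots,0_{q_{r}}]$, so that $P_{0}^{i}$ is precisely the matrix $J_{i}J_{i}^{T}$, where $J_{i}\in\R^{n\times q_{i}}$ is the block of columns of the identity $I_{n}$ corresponding to the $i$-th slot. With this observation, $QP_{0}^{i}Q^{T}=(QJ_{i})(QJ_{i})^{T}$, and $QJ_{i}$ is exactly the $q_{i}$-block $Q^{(i)}$ of $Q$. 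Since $Q\in O(n)$, each $Q^{(i)}$ satisfies $(Q^{(i)})^{T}Q^{(i)}=I_{q_{i}}$, hence $Q^{(i)}\in\St_{q_{i}}$, so the expression $\pi^{i}(Q^{(i)})=Q^{(i)}(Q^{(i)})^{T}$ is meaningful.

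First I would make the identification $P_{0}^{i}=J_{i}J_{i}^{T}$ explicit and verify $QJ_{i}=Q^{(i)}$ directly from the definition of the $q_{i}$-blocks. Then I would compute, for each $1\le i\le r$,
\begin{equation*}
\left(\pii(Q)\right)_{i}=QP_{0}^{i}Q^{T}=Q J_{i}J_{i}^{T}Q^{T}=Q^{(i)}\left(Q^{(i)}\right)^{T}=\pi^{i}\left(Q^{(i)}\right),
\end{equation*}
using $(\ref{actionOfOrtho})$ for the first equality and the definition of $\pi^{i}=\pi^{SG}_{q_{i}}$ for the last. Assembling these $r$ identities componentwise in $G^{\I}=\prod G_{q_{i}}$ yields $\pii(Q)=\left(\pi^{i}(Q^{(i)})\right)_{1\le i\le r}$, which is the claim.

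There is essentially no obstacle here: the statement is a bookkeeping identity relating the global action of $O(n)$ on projectors to the Stiefel-to-Grassmann maps on each block, and it reduces to the elementary fact that conjugating the block-diagonal projector $P_{0}^{i}$ by $Q$ produces the rank-one-in-blocks projector onto the range of the $i$-th column block of $Q$. The only point requiring a line of care is the verification that the $q_{i}$-block $Q^{(i)}$ is indeed a genuine element of $\St_{q_{i}}$, i.e.\ that its columns are orthonormal; this is immediate from $Q^{T}Q=I_{n}$, which forces $(Q^{(i)})^{T}Q^{(i)}=I_{q_{i}}$ (and also $(Q^{(i)})^{T}Q^{(j)}=0$ for $i\neq j$, a fact not needed for this Lemma but consistent with the mutual orthogonality defining $\F$). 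No deeper structural input is required.
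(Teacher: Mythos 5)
Your argument is essentially identical to the paper's: you write $P_{0}^{i}=J_{i}J_{i}^{T}$ with $J_{i}$ the $i$-th column block of $I_{n}$ (the paper's $I_{n}^{(i)}$), observe $QJ_{i}=Q^{(i)}$, and conclude $QP_{0}^{i}Q^{T}=Q^{(i)}(Q^{(i)})^{T}=\pi^{i}(Q^{(i)})$ componentwise. The only addition is the (correct, if unnecessary for the paper's purposes) remark that $Q^{(i)}\in\St_{q_{i}}$.
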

    
\begin{proof}
Let $Q \in O(n)$. Notice that for all $1 \leq i \leq r$, $Q^{(i)}=Q I_{n}^{(i)}$ and $I_{n}^{(i)}\left(I_{n}^{(i)}\right)^{T} = P_{0}^{i}$. So, for any $1 \leq i \leq r$,
\begin{equation*}
\pi^{i}\left( Q^{(i)} \right) = Q^{(i)}\left(Q^{(i)}\right)^{T} = Q I_{n}^{(i)}\left( I_{n}^{(i)}\right)^{T} Q^{T} = Q P_{0}^{i} Q^{T}. 
\end{equation*}

\noindent
Since $\pii(Q) = \left( Q P_{0}^{i} Q^{T} \right)_{1 \leq i \leq r}$, this concludes the proof.
\end{proof}

\noindent\\
For fixed $\cP=(P_{i})_{i} \in \F$, consider the following \textit{open neighborhood} of $\cP$ in $\F$.
\begin{equation}\label{defVPtilde}
\widetilde{\cV}_{\cP} := \left\{ \cR=(R_{i})_{i} \in \F : R_{i} \in G_{q_{i}} \setminus \Ct(P_{i}), 1 \leq i \leq r \right\}. 
\end{equation}

\begin{prop}\label{expLocalSec}
Let $\cP=(P_{i})_{i} \in \F$ and $Q_{\cP} \in \left(\pii \right)^{-1}\left( \cP \right)$. For any $\cR \in \widetilde{\cV}_{\cP}$, let $\fS_{\cP} \left( \cR \right) \in O(n)$ be the orthogonal matrix whose $q_{i}$-block is defined by 
\begin{equation}\label{defLocSecGI}
\left( \fS_{\cP} \left( \cR \right) \right)^{(i)} = 
\cH^{\pi^{i}}_{(P_{i}, R_{i})} \left( Q_{\cP}^{(i)} \right), \quad 1 \leq i \leq r. 
\end{equation}

\noindent
Then, the map $\fS_{\cP} : \widetilde{\cV}_{\cP} \lra O(n)$ is a smooth local section of $\pii$. Furthermore, for all $1 \leq i \leq r$, the explicit  expression of the holonomy map $\cH^{\pi^{i}}_{(P_{i}, R_{i})}$ is provided by Proposition $\ref{holSG}$. 
\end{prop}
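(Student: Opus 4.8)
The plan is to establish three things in turn: that $\fS_{\cP}(\cR)$ is a well-defined element of $O(n)$ for every $\cR \in \widetilde{\cV}_{\cP}$; that $\pii \circ \fS_{\cP}$ is the identity on $\widetilde{\cV}_{\cP}$, so that $\fS_{\cP}$ is a section; and that $\fS_{\cP}$ is smooth. The tools I would lean on are Lemma $\ref{linkPiS}$, which factors $\pii$ block by block through the maps $\pi^{i}$; Lemma $\ref{piSG}$ together with Lemma $\ref{lemLocSection}$, which for each $i$ turn the holonomy maps of the Riemannian submersion $\pi^{i}$ into a \emph{smooth} local section $\fS_{P_{i}}^{Q_{\cP}^{(i)}} : G_{q_{i}} \setminus \Ct(P_{i}) \to \St_{q_{i}}$, $R_{i} \mapsto \cH^{\pi^{i}}_{(P_{i}, R_{i})}(Q_{\cP}^{(i)})$; and finally the defining property of holonomy maps, that $\cH^{\pi^{i}}_{(P_{i}, R_{i})}$ carries the fiber $(\pi^{i})^{-1}(P_{i})$ into $(\pi^{i})^{-1}(R_{i})$.

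For the first two points I would argue as follows. Since $\pii(Q_{\cP}) = \cP$, Lemma $\ref{linkPiS}$ gives $\pi^{i}(Q_{\cP}^{(i)}) = P_{i}$, so $Q_{\cP}^{(i)} \in (\pi^{i})^{-1}(P_{i})$ and the holonomy map $\cH^{\pi^{i}}_{(P_{i}, R_{i})}$ is defined at $Q_{\cP}^{(i)}$ (indeed $R_{i} \notin \Ct(P_{i})$ because $\cR \in \widetilde{\cV}_{\cP}$). Its value, which is the $q_{i}$-block $\bigl(\fS_{\cP}(\cR)\bigr)^{(i)}$, then lies in $(\pi^{i})^{-1}(R_{i})$: it is a $q_{i}$-frame whose columns span the range of $R_{i}$. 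As $\cR \in \F$, the ranges of $R_{1}, \dots, R_{r}$ are mutually orthogonal and of total dimension $n$, so the $n$ columns of $\fS_{\cP}(\cR)$ form an orthonormal basis of $\R^{n}$ — orthonormal inside a block because each block is a Stiefel frame, orthogonal across blocks because the corresponding ranges are. Hence $\fS_{\cP}(\cR) \in O(n)$. Applying Lemma $\ref{linkPiS}$ again, now to $Q = \fS_{\cP}(\cR)$, gives $\pii(\fS_{\cP}(\cR)) = \bigl(\pi^{i}(\bigl(\fS_{\cP}(\cR)\bigr)^{(i)})\bigr)_{i} = (R_{i})_{i} = \cR$, since each block lies in $(\pi^{i})^{-1}(R_{i})$; so $\fS_{\cP}$ is a section of $\pii$.

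For smoothness, the point is that $\fS_{\cP}$ is assembled block by block from the maps $\fS_{P_{i}}^{Q_{\cP}^{(i)}}$, each of which is smooth on $G_{q_{i}} \setminus \Ct(P_{i})$ by Lemma $\ref{lemLocSection}$ applied to $\pi^{i}$ (here $\St_{q_{i}}$ is complete because compact, and $G_{q_{i}}$ is connected). Precomposing with the $i$-th coordinate projection $\mathrm{pr}_{i} : \F \to G_{q_{i}}$, which is smooth and sends the open set $\widetilde{\cV}_{\cP}$ into $G_{q_{i}} \setminus \Ct(P_{i})$, shows $\cR \mapsto \bigl(\fS_{\cP}(\cR)\bigr)^{(i)}$ is smooth into $\R^{n \times q_{i}}$ for each $i$; hence $\fS_{\cP}$ is smooth into $\R^{n \times n}$ and therefore into the embedded submanifold $O(n)$. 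The closed form for each $\cH^{\pi^{i}}_{(P_{i}, R_{i})}$ is then read off from Proposition $\ref{holSG}$. I do not anticipate a real difficulty: the only genuinely substantive step is verifying that the blocks fit together into an \emph{orthogonal} matrix, and that is precisely where the mutual orthogonality built into the definition of $\F$, combined with the fact that holonomy maps respect fibers, is used; the rest is bookkeeping on top of Lemmas $\ref{linkPiS}$, $\ref{piSG}$, $\ref{lemLocSection}$ and Proposition $\ref{holSG}$.
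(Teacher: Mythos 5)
Your proposal is correct and follows essentially the same route as the paper: factor $\pii$ block by block via Lemma $\ref{linkPiS}$, use that each holonomy map $\cH^{\pi^{i}}_{(P_{i}, R_{i})}$ lands in the fiber $(\pi^{i})^{-1}(R_{i})$ to get the section property, and deduce smoothness from the smoothness of holonomy maps (Lemma $\ref{lemLocSection}$). You additionally spell out the verification that $\fS_{\cP}(\cR) \in O(n)$ — using the mutual orthogonality of the ranges of the $R_{i}$ built into the definition of $\F$ — a point the paper states but leaves implicit, though it is logically needed before invoking Lemma $\ref{linkPiS}$ on $\fS_{\cP}(\cR)$.
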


\begin{proof}
First, we prove that the map $\fS_{\cP}$ is a section of $\pii$ on $\widetilde{\cV}_{\cP}$. For all $\cR \in \widetilde{\cV}_{\cP}$, 
\begin{equation*}
\pii\left( \fS_{\cP}\left( \cR \right) \right) = \left( \pi^{i}\left( \left( \fS_{\cP} \left( \cR \right) \right)^{(i)} \right) \right)_{i} = \left( \pi^{i}\left( \cH^{\pi^{i}}_{(P_{i}, R_{i})} \left( Q_{\cP}^{(i)} \right) \right) \right)_{i} = \left( R_{i} \right)_{i} = \cR. 
\end{equation*}

\noindent
Indeed, the first equality hereabove follows from Lemma $\ref{linkPiS}$ and the second from $(\ref{defLocSecGI})$. The third one holds because the holonomy map $\cH^{\pi^{i}}_{(P_{i}, R_{i})}$ is valued in the fiber of $R_{i}$ under $\pi^{i}$. Finally, the smoothness of $\mathfrak{S}_{\cP}$ follows from that of the holonomy maps $\left( \cH^{\pi^{i}}_{(P_{i}, R_{i})} \right)_{i}$.
\end{proof}

\subsection{Frames generating families of flags}\label{subsecPertFlags}

Let $\J \subset \R$ be an open interval and $\left\{ \cP(x) : x \in \J \right\}$ a family of flags. For any $x \in \J$, the type of $\cP(x)=(P_{i}(x))_{i}$ is of the form $\I(x) = \left\{ q_{i}(x) : 1 \leq i \leq r(x) \right\}$. We suppose that the following conditions hold, where $M(n)$ denotes the set of all $n \times n$ real matrices. 

\begin{Assum}\label{AssumPertFlags}
$(F1)$ The integer $r(x)$ is a constant $r$ independent of $x \in \J$. 

\smallskip

\noindent
$(F2)$ For all $1 \leq i \leq r$, the map $P_{i}(\cdot) : \J \lra M(n)$ is continuous. 
\end{Assum}

\noindent
By \cite[Lemma I-4.10]{Kato 1995}, $(F2)$ implies that for all $i$, $q_{i}(x)=\rk(P_{i}(x))$ is a constant $q_{i}$ on $\J$. Thus, Assumption $\ref{AssumPertFlags}$ implies that for all $x \in \J$, $\cP(x)$ is of type $\I=(q_{i})_{1 \leq i \leq r}$, i.e. $\cP(x) \in \F$.

\noindent\\
We aim at finding a map $\cQ(\cdot) : \J \lra O(n)$ whose regularity is that of the map $\cP(\cdot) : \J \lra \F$ and such that for all $x \in \J$, the columns of $\cQ(x)$ generate $\cP(x)$ (See Definition $\ref{defiGenerateFlag}$). By Lemma $\ref{FiberPii}$, this latter condition is equivalent to 
\begin{equation}\label{genFlagBis}
\pii \left( \cQ(x) \right) = \cP(x), \quad x \in \J. 
\end{equation}

\noindent
This question is motivated by the eigenvector problem in perturbation theory, detailed in paragraph $\ref{applicationPerTheo}$ below. First, we present the analytic solution of \cite{Kato 1995}, that we only reformulate   in terms of flags. Then, we develop our new geometric solution based on the preceding results of this paper. For both methods, the solution obtained is implicit. However, in both cases, an \textit{explicit} solution is available \textit{locally}, i.e. on a neighborhood of any fixed $x_{0} \in \J$. Finally, we compare all these methods, which reveals that our solution, based on the geometry of flag manifolds, improves the analytic one from \cite{Kato 1995}. 

\smallskip

\noindent
Throughout the sequel, we fix $x_{0} \in \J$ and $Q_{0} \in O(n)$ such that $\pii(Q_{0})=\cP(x_{0})$.

\subsubsection{Analytic solution}

Assume that for all $1 \leq i \leq r$, the map $P_{i}(\cdot) : \J \lra M(n)$ is of class $\cC^{1}$. We denote by $GL(n)$ the general linear group of $\R^{n}$. Then, by solving a \textit{linear differential equation}, it is proved in \cite[Paragraph II-4.5]{Kato 1995} that there exists a map $K(\cdot) : \J \lra GL(n)$ of class $\cC^{1}$ such that for all $x \in \J$,
\begin{equation}\label{equationKato}
K(x)P_{i}(x_{0})K(x)^{-1} = P_{i}(x), \quad x \in \J, \quad 1 \leq i \leq r. 
\end{equation}

\noindent
In fact, by \cite[Paragraph II-6.2]{Kato 1995}, for all $x \in \J$, $K(x) \in O(n)$. So, $(\ref{equationKato})$ means in terms of \textit{flags} that  
\begin{equation}\label{KoperatorKato}
K(x) \cdot \cP(x_{0}) = \cP(x), \quad x \in \J.
\end{equation}

\noindent
Then, consider the map $\cQ_{K}(\cdot)$ defined by 
\begin{equation*}
\cQ_{K}(\cdot) : \J \lra O(n), \quad \cQ_{K}(x) := K(x)Q_{0} \in O(n). 
\end{equation*}

\noindent
 The map $\cQ_{K}(\cdot)$ is of class $\cC^{1}$, because the map $K(\cdot)$ is. Now, by $(\ref{KoperatorKato})$, for all $x \in \J$, 
\begin{equation}\label{genFlagKato}
\pii(\cQ_{K}(x)) = \pii \left( K(x)Q_{0} \right) = K(x) \cdot \pii(Q_{0}) = K(x) \cdot \cP(x_{0}) = \cP(x), 
\end{equation}

\noindent
where the second equality holds because for any $Q \in O(n)$, $\pii(Q):=Q \cdot \cP_{0}$. Therefore, $(\ref{genFlagKato})$ implies that for all $x \in \J$, the columns of $\cQ_{K}(x)$ generate the flag $\cP(x)$, which follows from $(\ref{genFlagBis})$

\begin{rem}
The family of frames $\left\{ \cQ_{K}(x) : x \in \J \right\}$ is implicit. However, one can derive from \cite[Remark II-4.4]{Kato 1995} an explicit map $K(\cdot)$, defined only on a neighborhood $\J_{K}^{0}$ of  $x_{0}$ and valued in $O(n)$, such that $(\ref{KoperatorKato})$ holds. Thus, an explicit solution $\cQ_{K}^{0}(\cdot) : \J_{K}^{0} \lra O(n)$ is available locally.
\end{rem}

\subsubsection{Geometric solution}

Proposition $\ref{propGlobalImplicit}$ hereafter provides a geometric solution, based on the principal bundle structure of the main orbital map $\piio : SO(n) \lra \F$, by applying Lemma $\ref{ppalBundleContract}$ below.

\begin{prop}\label{propGlobalImplicit}
Assume that the map $\cP(\cdot) : \J \lra \F$ is of class $\cC^{1}$. Then, there exists a map $\cQ^{L}(\cdot) : \J \lra SO(n)$ of class $\cC^{1}$ such that for all $x \in \J$, the columns of $\cQ^{L}(x)$ generate the flag $\cP(x)$.
\end{prop}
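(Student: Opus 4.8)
The statement asserts the existence of a $\cC^{1}$ lift $\cQ^{L}(\cdot) : \J \lra SO(n)$ of the given $\cC^{1}$ curve $\cP(\cdot) : \J \lra \F$ through the main orbital map $\piio : SO(n) \lra \F$. Since $\J$ is an open interval, hence contractible, the natural strategy is to invoke the fact that a fibre bundle over a contractible (paracompact) base is trivial, so that any map from a contractible space into the base admits a global lift of the same regularity. The plan is therefore to use Lemma $\ref{structureMainOM}$, which says that $\left(SO(n), \piio, \F, SO(\I)\right)$ is a principal $SO(\I)$-bundle, and then apply the forthcoming Lemma $\ref{ppalBundleContract}$ (cited in the statement) asserting that a principal bundle over a contractible base is trivial, or directly that a $\cC^{1}$ map from a contractible manifold into the base of a smooth principal bundle lifts to a $\cC^{1}$ map into the total space.

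\medskip

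First I would recall that $\F$ is the orbit of $\cP_{0}$ under the action $(\ref{actionOfOrtho})$, with isotropy group $SO(\I)$, so by Lemma $\ref{structureMainOM}$ the quadruple $\left(SO(n), \piio, \F, SO(\I)\right)$ is a principal bundle in the smooth category. Next, since $\J$ is an open interval in $\R$, it is smoothly contractible; hence the pulled-back principal bundle $\cP(\cdot)^{*}\bigl(SO(n) \xrightarrow{\piio} \F\bigr)$ over $\J$ is a principal $SO(\I)$-bundle over a contractible base, and Lemma $\ref{ppalBundleContract}$ gives that it is trivial, i.e. isomorphic to $\J \times SO(\I)$. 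A trivialization provides, in particular, a global smooth (indeed the regularity is inherited from that of $\cP(\cdot)$, namely $\cC^{1}$) section of the pulled-back bundle, which is exactly a $\cC^{1}$ map $\cQ^{L}(\cdot) : \J \lra SO(n)$ with $\piio \circ \cQ^{L} = \cP(\cdot)$. By Lemma $\ref{FiberPii}$, the identity $\piio(\cQ^{L}(x)) = \cP(x)$ for all $x \in \J$ is equivalent to saying that the columns of $\cQ^{L}(x)$ generate the flag $\cP(x)$ in the sense of Definition $\ref{defiGenerateFlag}$, which is the desired conclusion.

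\medskip

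Concerning regularity, I would be slightly careful: the standard bundle-triviality argument over a contractible base is usually phrased in the smooth ($\cC^{\infty}$) category, whereas here $\cP(\cdot)$ is only assumed $\cC^{1}$. The cleanest way to handle this is to note that the lift can be constructed by the ODE/parallel-transport method along $\J$: fixing $x_{0} \in \J$ and $Q_{0}$ with $\piio(Q_{0}) = \cP(x_{0})$, one obtains $\cQ^{L}(x)$ by lifting the path $x \mapsto \cP(x)$ horizontally (for any principal connection on the bundle) starting at $Q_{0}$; the horizontal lift of a $\cC^{1}$ path is $\cC^{1}$ by the standard theorem on ODEs with $\cC^{1}$ data, and it is globally defined on $\J$ because the structure group $SO(\I)$ is compact so horizontal lifts do not escape to infinity in finite "time". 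This is also precisely the content that the local explicit section of Proposition $\ref{expLocalSec}$, together with the holonomy/transition maps, makes concrete. The main (and really the only) obstacle is bookkeeping the regularity transfer and making sure the lift extends over all of $\J$ rather than just locally; the compactness of $SO(\I)$ together with contractibility of $\J$ removes any topological obstruction, so no monodromy term appears and the lift is genuinely global.
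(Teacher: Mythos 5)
Your argument is essentially identical to the paper's: you invoke Lemma~$\ref{structureMainOM}$ for the principal-bundle structure of $\piio$, Lemma~$\ref{ppalBundleContract}$ (pullback over contractible $\J$ is trivial, hence has a global $\cC^{1}$ section) for the lift, and Lemma~$\ref{FiberPii}$ / equation~$(\ref{genFlagBis})$ to translate $\piio\circ\cQ^{L}=\cP$ into the column-generation statement. Your extra remarks on obtaining the lift by horizontal transport with $\cC^{1}$ ODE theory and compactness of $SO(\I)$ are a reasonable way to justify the regularity transfer in Lemma~$\ref{ppalBundleContract}$, where the paper instead defers to \cite{Ferrer Garcia and Puerta 1994}, but the route is the same.
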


\begin{proof}
By Lemma $\ref{structureMainOM}$, the main orbital map $\piio : SO(n) \lra \F$ is a principal bundle. Since the interval $\J$ is contractible, Lemma $\ref{ppalBundleContract}$ below implies that there exists a smooth map $\cQ^{L}(\cdot) : \J \lra SO(n)$ such that for all $x \in \J$,
\begin{equation*}
\piio \left( \cQ^{L}(x) \right) = \cP(x). 
\end{equation*}

\noindent
Since $\piio$ is the restriction to $SO(n)$ of $\pii$, the expected conclusion follows from $(\ref{genFlagBis})$. 
\end{proof}

\begin{lem}\label{ppalBundleContract}
Let $\Pi : \E \lra \B$ be a principal bundle, $\M$ a manifold and $F : \M \lra \B$ a map of class $\cC^{1}$. If $\M$ is contractible, then there exists a map $L : \M \lra \E$ of class $\cC^{1}$ such that $\Pi \circ L = F$. 
\end{lem}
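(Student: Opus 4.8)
The plan is to reduce the statement to the classical fact that a principal bundle over a contractible base is trivial, applied to the pullback of $\E$ along $F$. Let $\K$ denote the structure group of $\Pi : \E \lra \B$. First I would form the pullback
\begin{equation*}
F^{*}\E := \left\{ (m, e) \in \M \times \E : F(m) = \Pi(e) \right\},
\end{equation*}
which, since $\Pi$ is a submersion, is transverse to the diagonal inside $\M \times \E$ and hence a $\cC^{1}$ manifold; it comes equipped with the projection $\rho : F^{*}\E \lra \M$, $(m,e) \lmp m$, and the canonical map $\widehat{F} : F^{*}\E \lra \E$, $(m,e) \lmp e$, which satisfy $\Pi \circ \widehat{F} = F \circ \rho$ by construction. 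Pulling back the local trivializations of $\Pi$ along $F$ and letting $\K$ act on the $\E$-factor, one checks that $\rho : F^{*}\E \lra \M$ is again a principal $\K$-bundle, of class $\cC^{1}$.

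Next I would invoke the homotopy invariance of principal bundles: if two $\cC^{1}$ self-maps of $\M$ are $\cC^{1}$-homotopic, then the bundles they induce from $F^{*}\E$ are isomorphic (a classical fact, see e.g. \cite{Michor 2008}). Since $\M$ is a contractible manifold, it is smoothly contractible, so $\mathrm{id}_{\M}$ is homotopic to a constant map $c_{m_{0}}$ for some $m_{0} \in \M$; therefore
\begin{equation*}
F^{*}\E = \left( \mathrm{id}_{\M} \right)^{*}\left( F^{*}\E \right) \cong c_{m_{0}}^{*}\left( F^{*}\E \right) = \M \times \rho^{-1}(m_{0}),
\end{equation*}
and the last bundle is trivial. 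Hence $\rho$ admits a global $\cC^{1}$ section $s : \M \lra F^{*}\E$, obtained by transporting a constant section through the trivialization. Setting $L := \widehat{F} \circ s : \M \lra \E$ then yields a $\cC^{1}$ map with
\begin{equation*}
\Pi \circ L = \Pi \circ \widehat{F} \circ s = F \circ \rho \circ s = F,
\end{equation*}
which is the assertion.

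The point requiring care is regularity: the pullback construction and homotopy invariance of bundles are usually phrased in the topological or smooth category, whereas $F$ is only of class $\cC^{1}$, so one has to run the standard argument — triviality of a principal bundle over $[0,1]$ via a principal connection, then a partition-of-unity patching on $\M \times [0,1]$, or equivalently the covering homotopy property of $\rho$ — while checking that all the maps produced stay $\cC^{1}$, which they do. Alternatively, for the only use made of this Lemma, in Proposition \ref{propGlobalImplicit}, the base $\M = \J$ is a one-dimensional interval and the argument collapses to an elementary one: fix $e_{0} \in \Pi^{-1}(F(x_{0}))$ and a smooth principal connection on $\E$; the horizontal lift of the $\cC^{1}$ curve $x \lmp F(x)$ issued from $e_{0}$ solves an ordinary differential equation with continuous right-hand side, hence is a $\cC^{1}$ map $L : \J \lra \E$ with $\Pi \circ L = F$.
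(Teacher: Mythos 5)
Your argument is essentially the same as the paper's: the paper forms the pullback bundle $F^{*}\Pi$ over the contractible manifold $\M$, observes that it therefore admits a global section, and derives $L$ from it, citing \cite{Ferrer Garcia and Puerta 1994} for details. Your spelled-out version (pullback, homotopy invariance of principal bundles, triviality over a contractible base, composition with the bundle map $\widehat{F}$) is a correct expansion of that same route, and your caveat about running the standard argument in the $\cC^{1}$ category rather than the smooth one is a legitimate point that the paper glosses over by deferring to the reference. Your closing remark that, for the actual application with $\M = \J$ an interval, one can replace all of this by horizontal lift along a connection is a nice more elementary alternative, though not what the paper does.
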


\begin{proof}
The \textit{pullback} $F^{*}\Pi$ of $\Pi$ wrt $F$ is also a principal bundle, over $\M$ contractible. So, $F^{*}\Pi$ has a \textit{global section}, from which a map $L$ as in the statement is easily derived. See \cite{Ferrer Garcia and Puerta 1994}, where this argument is developed in detail. 
\end{proof}

\subsubsection{Local explicit frames by geometric approach}

The map $\cQ^{L}(\cdot) : \J \lra SO(n)$ is \textit{implicit}. For fixed $x_{0} \in \J$, we obtain in $(\ref{defQhol})$ below, an \textit{explicit} solution $\cQ_{\cH}(\cdot)$ but defined only on a neighborhood of $x_{0}$. The map $\cQ_{\cH}(\cdot)$ is built from the explicit local section of $\pii$ given in $(\ref{defLocSecGI})$. Namely, let $\J_{\cH}^{0}$ be the \textit{open interval} defined as the connected component of $x_{0}$ in the following set $\J_{\cH}$:  
\begin{equation}\label{condCutPert}
\J_{\cH} := \left\{ x \in \J : \cP(x) \in \widetilde{\cV}_{\cP(x_{0})} \right\}.
\end{equation}

\noindent
Since the map $\cP(\cdot)$ is continuous on $\J$, $\J_{\cH}$ is an \textit{open set}, so that $\J_{\cH}^{0}$ is indeed also. Furthermore, $x \in \J_{\cH}$ iff for all $1 \leq i \leq r$, none of the \textit{principal angles} between $P_{i}(x)$ and $P_{i}(x_{0})$ is equal to $\frac{\pi}{2}$. Consider the following map:  
\begin{equation}\label{defQhol}
\cQ_{\cH}(\cdot) : \J_{\cH}^{0} \lra O(n), \quad \cQ_{\cH}(x) := \fS_{\cP(x_{0})} \left( \cP(x) \right).
\end{equation}

\noindent
Then, the map $\cQ_{\cH}(\cdot)$ is well-defined and $(\ref{defLocSecGI})$ provides an explicit expression of $\cQ_{\cH}(x)$ hereafter:  
\begin{equation}\label{QHxi}
\left( \cQ_{\cH}(x) \right)^{(i)} = \cH^{\pi^{i}}_{(P_{i}(x_{0}), P_{i}(x))} \left( Q_{\cP(x_{0})}^{(i)} \right), \quad 1 \leq i \leq r.
\end{equation}

\noindent
By Propostion $\ref{expLocalSec}$, the map $\fS_{\cP(x_{0})} : \widetilde{\cV}_{\cP(x_{0})} \lra O(n)$ is a smooth  local section of $\pii$. So, on one hand, the map $\cQ_{\cH}(\cdot)$ is \textit{continuous}, since $\cP(\cdot)$ is. On the other hand, for all $x \in \J_{\cH}^{0}$, 
\begin{equation*}
\pii \left( \cQ_{\cH}(x) \right) = \pii \left( \fS_{\cP(x_{0})} \left( \cP(x) \right) \right) = \cP(x). 
\end{equation*}

\noindent
Then, $(\ref{genFlagBis})$ implies that for all $x \in \J_{\cH}^{0}$, the columns of $\cQ_{\cH}(x)$ generate the flag $\cP(x)$.

\subsubsection{Comparison of the solutions}

$(i)$ First, we compare the global implicit solutions, i.e. defined on the whole $\J$. Thus, the geometric solution $\cQ^{L}(\cdot)$ is valued in $SO(n)$, instead of $O(n)$ for $\cQ_{K}(\cdot)$, and is more intrinsic. 

\smallskip

\noindent
$(ii)$ For the local explicit frames $\left\{ \cQ_{\cH}(x) : x \in \J_{\cH}^{0} \right\}$, we only need to suppose that Assumption $\ref{AssumPertFlags}$ holds, while all other solutions require additionally a stronger regularity assumption. 

\smallskip

\noindent
$(iii)$ Finally, the map $\cQ_{\cH}(\cdot)$ is \textit{optimal} in the sense of Lemma $\ref{optimalQH}$ hereafter.

\begin{lem}\label{optimalQH}
For all $x \in \J_{\cH}^{0}$ and $1 \leq i \leq r$, among all $q_{i}$-frames generating the range of $P_{i}(x)$, $\left( \cQ_{\cH}(x) \right)^{(i)}$ minimizes the geodesic distance in $\St_{q_{i}}$ to $Q_{0}^{(i)}$. 
\end{lem}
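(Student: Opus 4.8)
The plan is to reduce the statement, block by block, to the optimality property of holonomy maps already recorded in Remark~\ref{optimality}. First I would unwind the definitions: by $(\ref{defQhol})$ and the explicit formula $(\ref{QHxi})$ for the blocks of $\fS_{\cP(x_{0})}$, made with the choice $Q_{\cP(x_{0})} = Q_{0}$, one has for every $x \in \J_{\cH}^{0}$ and every $1 \le i \le r$
\begin{equation*}
\left( \cQ_{\cH}(x) \right)^{(i)} = \cH^{\pi^{i}}_{(P_{i}(x_{0}), P_{i}(x))} \left( Q_{0}^{(i)} \right), \qquad \pi^{i} = \pi^{SG}_{q_{i}} : \St_{q_{i}} \lra G_{q_{i}}.
\end{equation*}
Since $x \in \J_{\cH}^{0} \subset \J_{\cH}$, the definition $(\ref{condCutPert})$ of $\J_{\cH}$ together with $(\ref{defVPtilde})$ gives $P_{i}(x) \in G_{q_{i}} \setminus \Ct(P_{i}(x_{0}))$ for all $i$, so that the holonomy map $\cH^{\pi^{i}}_{(P_{i}(x_{0}), P_{i}(x))}$ is well defined.

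Next I would verify the hypotheses needed to apply Remark~\ref{optimality} (i.e.\ \cite[Lemma 26.11]{Michor 2008}) to $\pi^{i}$: by Lemma~\ref{piSG}, $\pi^{i}$ is a Riemannian submersion from $\left( \St_{q_{i}}, g^{\St} \right)$ onto $\left( G_{q_{i}}, g^{\Gr} \right)$; moreover $\St_{q_{i}}$ is complete (being compact) and $G_{q_{i}}$ is connected, and $P_{i}(x)$ lies outside the cut locus of $P_{i}(x_{0})$, so there is a unique minimal geodesic between them and $\cH^{\pi^{i}}_{(P_{i}(x_{0}), P_{i}(x))}(Q_{0}^{(i)})$ is by construction the endpoint of its horizontal lift through $Q_{0}^{(i)}$. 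Remark~\ref{optimality} then gives
\begin{equation*}
d_{g}^{St} \left( Q_{0}^{(i)}, \left( \cQ_{\cH}(x) \right)^{(i)} \right) = \min \left\{ d_{g}^{St} \left( Q_{0}^{(i)}, W \right) : W \in \left( \pi^{i} \right)^{-1}(P_{i}(x)) \right\}.
\end{equation*}

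Finally I would identify the fiber $\left( \pi^{i} \right)^{-1}(P_{i}(x))$ with the set of $q_{i}$-frames generating the range of $P_{i}(x)$: since $\pi^{i}(W) = WW^{T}$ and $P_{i}(x)$ is the orthogonal projector onto its range, a frame $W \in \St_{q_{i}}$ satisfies $WW^{T} = P_{i}(x)$ exactly when its columns form an orthonormal basis of $\mathrm{range}(P_{i}(x))$, i.e.\ a $q_{i}$-frame generating it in the sense of Definition~\ref{defiGenerateFlag}. Substituting this into the displayed minimum yields the claim. I do not expect a real obstacle here; the only point requiring care is the bookkeeping that the matrix $Q_{\cP(x_{0})}$ used to build the section $\fS_{\cP(x_{0})}$ in $(\ref{defQhol})$ is the same fixed $Q_{0}$ appearing in the statement, and that the optimality of Remark~\ref{optimality} is invoked for precisely the holonomy map of $(\ref{QHxi})$.
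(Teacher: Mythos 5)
Your proposal is correct and takes essentially the same route as the paper: the paper's own proof is just the observation that $(\ref{QHxi})$ exhibits each block of $\cQ_{\cH}(x)$ as the value of a holonomy map, so Remark~$\ref{optimality}$ applies directly. You have simply filled in the bookkeeping (that $Q_{\cP(x_0)}=Q_0$, that $x\in\J_{\cH}^{0}$ puts $P_i(x)$ outside $\Ct(P_i(x_0))$, and that the fiber of $\pi^i$ is exactly the set of $q_i$-frames generating the range of $P_i(x)$) that the paper leaves implicit.
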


\begin{proof}
Since $\cQ_{\cH}(x)$ is defined by $(\ref{QHxi})$, we conclude by applying Remark $\ref{optimality}$. 
\end{proof}

\noindent
In particular, for all $1 \leq i \leq r$, denoting by $d_{g}^{i}$ the geodesic distance in $\St_{q_{i}}$, 
\begin{equation*}
d_{g}^{i} \left( \left(\cQ_{\cH}(x)\right)^{(i)} , Q_{0}^{(i)} \right) \leq 
\min \left\{ d_{g}^{i} \left( \left(\cQ_{K}(x)\right)^{(i)} , Q_{0}^{(i)} \right) ;  d_{g}^{i} \left( \left(\cQ^{0}_{K}(x)\right)^{(i)} , Q_{0}^{(i)} \right) \right\}, 
\quad x \in \J_{\cH}^{0} \cap \J_{K}^{0}. 
\end{equation*}

\noindent\\
In conclusion, the map $\cQ_{\cH}(\cdot)$ is the \textit{best solution}, although it is a local one. Indeed, it is explicit, requires only Assumption $\ref{AssumPertFlags}$ and satisfies the optimal property of Lemma $\ref{optimalQH}$. 

\bigskip

\begin{rem}
A local section of the orbital map $\pii : O(n) \lra \F$ based directly on the holonomy maps between fibers under $\pii$ would provide a more accurate local geometric solution. However, an explicit expression for such holonomy maps requires a closed form for the Logarithm map in $\F$, which is not known yet. 
\end{rem}

\subsection{Application to perturbation of eigenvectors}\label{applicationPerTheo}

In the sequel, we fix $\cS_{0} \in \Sy_{n}$. A perturbation of $\cS_{0}$ is a map $\cS(\cdot) : \J \lra \Sy_{n}$, where $\J \subset \R$ is an open interval containing $x_{0}$ such that $\cS(x_{0})=\cS_{0}$. A perturbation of eigenvectors of $\cS_{0}$ is a map $\cQ^{\cS}(\cdot) : \J \lra O(n)$ such that for all $x \in \J$, $\cQ^{\cS}(x)$ is a matrix of eigenvectors of $\cS(x)$. In perturbation theory, the question of the existence of a map $\cQ^{\cS}(\cdot)$ with some regularity, inherited from that of  $\cS(\cdot)$, is called the eigenvector problem.

\subsubsection{Flag of eigenspaces of a symmetric matrix}

For $\cS \in \Sy_{n}$, denote by $r(\cS)$ the number of distinct eigenvalues of $\cS$ and by $\Lambda(\cS)$ its spectrum: 
\begin{equation*}
\Lambda(\cS) = \left\{ \lambda_{1}(\cS) > ... > \lambda_{i}(\cS) > ... > \lambda_{r(\cS)}(\cS) \right\}, 
\end{equation*}

\noindent
where, for all $1 \leq i \leq r(\cS)$, $\lambda_{i}(\cS)$ is an eigenvalue of $\cS$. For all $i$, let $P_{i}(\cS)$ be the \textit{eigenprojection} associated to $\lambda_{i}(\cS)$, i.e. the orthogonal projection onto the eigenspace associated to $\lambda_{i}(\cS)$. Set
\begin{equation*}
\cP^{\cS} := \left( P_{i}(\cS) \right)_{1 \leq i \leq r(\cS)}. 
\end{equation*}

\noindent
Then, the spectral theorem implies readily that $\cP^{\cS}$ is a flag, called the \textit{flag of eigenspaces} of $\cS$. 

\begin{lem}\label{flagEigenPii}
Let $\cS \in \Sy_{n}$ and $\I=(q_{i})_{1 \leq i \leq r}$ the type of its flag of eigenspaces $\cP^{\cS}$. Let $Q \in O(n)$. Then, $Q$ is a matrix of eigenvectors of $\cS$ iff $\pii(Q)=\cP^{\cS}$. 
\end{lem}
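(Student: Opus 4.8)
The plan is to reduce the statement to Lemma~\ref{FiberPii}, which already characterizes when $\pii(Q)$ equals a given flag. First I would unwind notation: write $\lambda_{1}(\cS) > \cdots > \lambda_{r}(\cS)$ for the distinct eigenvalues of $\cS$ and $E_{i} := \Img\left(P_{i}(\cS)\right)$ for the eigenspace associated with $\lambda_{i}(\cS)$, so that $q_{i} = \dim E_{i}$ by the spectral theorem and $\I = (q_{i})_{1 \leq i \leq r}$ is exactly the type of $\cP^{\cS}$. By Lemma~\ref{FiberPii}, $\pii(Q) = \cP^{\cS}$ holds if and only if the columns of $Q$ generate the flag $\cP^{\cS}$ in the sense of Definition~\ref{defiGenerateFlag}, i.e. for every $i$ the columns of the $q_{i}$-block $Q^{(i)}$ span $E_{i}$.

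It then remains to identify this frame-generating condition with the statement that $Q \in O(n)$ is a matrix of eigenvectors of $\cS$, the columns being grouped so that the $i$-th block corresponds to $\lambda_{i}(\cS)$. For the forward direction, each column of $Q^{(i)}$ then lies in $E_{i}$; since $Q$ is orthogonal these $q_{i}$ columns are orthonormal, hence a basis of the $q_{i}$-dimensional space $E_{i}$, and in particular they span it. For the converse, if the columns of $Q^{(i)}$ span $E_{i}$ then each of them lies in $E_{i}$, hence is an eigenvector of $\cS$ for the eigenvalue $\lambda_{i}(\cS)$; so $Q$ is a matrix of eigenvectors of $\cS$ with the eigenvalue-blocks arranged in decreasing order. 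Combining the two reductions yields the equivalence.

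The only point requiring care is the ordering convention implicit in the phrase ``matrix of eigenvectors of $\cS$'': the $i$-th block of columns of $Q$ must correspond to the $i$-th largest eigenvalue, matching the block structure of the standard flag $\cP_{0}$ and hence of $\pii(Q) = (Q P_{0}^{i} Q^{T})_{i}$. Once this is made explicit, no computation is needed --- everything follows from the spectral theorem (which gives the orthogonal decomposition $\R^{n} = \bigoplus_{i} E_{i}$ and thus $\dim E_{i} = q_{i}$) together with Lemma~\ref{FiberPii} --- so I do not expect a genuine obstacle here.
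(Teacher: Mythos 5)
Your proof is correct and follows exactly the paper's argument: reduce "matrix of eigenvectors" to "columns generate the flag $\cP^{\cS}$" via the spectral theorem, then invoke Lemma~\ref{FiberPii}. The paper's one-line proof leaves the ordering convention tacit; your explicit note that the eigenvalue-blocks must match the block structure of $\cP_{0}$ is a reasonable clarification of the same reasoning, not a departure from it.
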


\begin{proof}
Clearly, $Q$ is a matrix of eigenvectors of $\cS$ iff its columns generate the flag $\cP^{\cS}$, which is equivalent to $\pii(Q)=\cP^{\cS}$, according to Lemma $\ref{FiberPii}$.
\end{proof}

\subsubsection{Geometric solution to the eigenvector problem}

In the sequel, we suppose that the following conditions hold. 

\begin{Assum}\label{AssumPertSym}
$(S1)$ The map $\cS(\cdot) : \J \lra \Sy_{n}$ is analytic. 

\smallskip

\noindent
$(S2)$ The number $r(x)$ of distinct eigenvalues of $\cS(x)$ is a constant $r$ on $\J$.
\end{Assum}

\noindent
In fact, $(S1)$ implies that $r(x)$ is constant on $\J$, except for exceptional values of $x$: See p.64 in \cite{Kato 1995}. Thus, $(S2)$ means that $\J$ contains no such point, which is supposed most of the time in \cite{Kato 1995}.

\noindent\\
For all $x \in \J$, the flag of eigenspaces of $\cS(x)$ is denoted by $\cP^{\cS}(x)=\left( P_{i}^{\cS}(x) \right)_{i}$. Obviously, $(S2)$ means that $(F1)$ holds for the family of flags $\left\{ \cP^{\cS}(x) : x \in \J  \right\}$. Now, by \cite[Theorem II-6.1]{Kato 1995}, $(S1)$ implies that for all $1 \leq i \leq r$, the eigenprojection map $P_{i}^{\cS}(\cdot) : \J \lra \Sy_{n}$ is also analytic. So, $(F2)$ of Assumption $\ref{AssumPertFlags}$ holds for $\left( \cP^{\cS}(x) \right)_{x \in \J}$, to which we can therefore apply the results of subsection $\ref{subsecPertFlags}$. In particular, the flags $\left( \cP^{\cS}(x) \right)_{x \in \J}$ have a \textit{constant type} $\I=(q_{i})_{1 \leq i \leq r}$, i.e. for all $x \in \J$, $\cP^{\cS}(x) \in \F$.

\begin{rem}
The analyticity of the map $\cS(\cdot)$ seems to be a too strong assumption to insure that the eigenprojections are continuous. However, Example 5.3 p.111 in \cite{Kato 1995} provides a case of a map $T(\cdot) : \R \lra \Sy_{2}$ that is even smooth, but for which the eigenprojections are not continuous. 
\end{rem}

\noindent
By Lemma $\ref{flagEigenPii}$, a solution to the eigenvector problem is a family of frames generating the flags $\left( \cP^{\cS}(x) \right)_{x \in \J}$. Now, according to subsection $\ref{subsecPertFlags}$, the best family of such frames is that given by $(\ref{defQhol})$, where $\cP(\cdot)$ is replaced by $\cP^{\cS}(\cdot)$. So, these frames define a map $\cQ_{\cH}^{\cS}(\cdot)$ valued in $O(n)$:  
\begin{equation}\label{defQholBis}
\cQ_{\cH}^{\cS}(\cdot) := \fS_{\cP^{\cS}(x_{0})}(\cdot) \circ \cP^{\cS}(\cdot)
\end{equation}

\noindent
So, the map $\cQ_{\cH}^{\cS}(\cdot)$ is a \textit{smooth} and \textit{explicit} local (i.e. not defined on the whole $\J$) perturbation of eigenvectors of $\cS_{0}$, for which the optimal property of Lemma $\ref{optimalQH}$ holds, improving that given in \cite{Kato 1995}.

\section*{Acknowledgements}
The authors have received funding from the European Research Council (ERC) under the European Union’s Horizon 2020 research and innovation program (grant agreement G-Statistics No 786854). It was also supported by the French government through the 3IA Côte d’Azur Investments ANR-19-P3IA-0002 managed by the National Research Agency. The authors thank Yann Thanwerdas for his careful proofreading of this manuscript.

\section{Appendix}\label{sect5}

\subsection{Proofs of Section $\ref{sect3}$}

\subsubsection{Proof of Lemma $\ref{UisOpen}$}

\begin{proof}
Let $\Gras(K, \g)$ be the Grassmannian of all linear subspaces of dimension $K$ in $\g$. Consider the map 
\begin{equation*}
\kappa : \B \lra \Gras(K, \g), \quad \kappa(b) := \Ad(Q_{b})(\fk)=\fk_{b}, 
\end{equation*}

\noindent
which follows from $(ii)$ of Lemma $\ref{MBandMH}$. Now, $\fk_{b_0}=\fk$, so that $b_{0} \in \U$ and 
\begin{equation*}
\U = \kappa^{-1}\left( \mathfrak{U}_{\fm} \right)
\quad \textrm{where} \quad  
\mathfrak{U}_{\fm} := \left\{ \fv \in \Gras(K, \g) : \fv \cap \fm = \left\{0\right\} \right\}. 
\end{equation*}
 
\noindent
It is well-known that $\mathfrak{U}_{\fm}$ is an open subset of $\Gras(K, \g)$. So, it is enough to prove that $\kappa$ is continuous. Now

 \[ \kappa : \xymatrix{
  b \ar@{|->}[r]^{\sigma_{0}} & Q_{b} \ar@{|->}[r]^{\Ad} & \Ad(Q_{b}) \ar@{|->}[r]^{E_{\fk}} & \Ad(Q_{b})(\fk) 
  }, \qquad \textrm{i.e.} \qquad \kappa = E_{\fk} \circ \Ad \circ \sigma_{0}, 
\]

\noindent
where $E_{\fk} : \Aut(\g) \lra \Gras(K, \g)$ is defined by $F \lmp F(\fk)$. First, $\sigma_{0}$ and $\Ad$ are smooth. Secondly, Proposition $\ref{biInvScal}$ implies that $\Ad$ is valued in the orthogonal group $O(\g)$ of $\left( \g, \left\langle \cdot, \cdot \right\rangle \right)$. Thirdly, when $\Gras(K, \g)$ is identified with orthogonal projectors, the restriction $\left(E_{k}\right)_{|O(\g)} : O(\g) \lra \Gras(K, \g)$ is of the form $\widetilde{\mathcal{Q}} \lmp \widetilde{\mathcal{Q}}P_{\fk} \widetilde{\mathcal{Q}}^{T}$, where $P_{\fk}$ is the projector associated to $\fk$. So, $\left(E_{k}\right)_{|O(\g)}$ is smooth. Therefore, $\kappa$ is smooth. 
\end{proof}

\subsubsection{Proof of Proposition $\ref{PropDYbar}$}

\begin{proof}
By Lemma $\ref{inverseProjection}$,
\begin{equation*}
\overline{Y_{\beta}}(b) = p_{b}^{-1}\left( V_{\beta}(b) \right) = \sum\limits_{k=1}^{N} \left( \sum\limits_{\ell=1}^{N} v_{k,\ell}(b) \right) \epsilon_{k}
\quad \textrm{where} \quad
v_{k,\ell}(b) = \left( \cA(b)^{-1} \right)_{k, \ell} \left\langle V_{\beta}(b), \epsilon'_{\ell}(b) \right\rangle. 
\end{equation*}

\noindent
So, for any $\Delta \in T_{b_{0}}\B$, 
\begin{equation}\label{AppDiffYbeta}
\left( d_{b_{0}}\overline{Y_{\beta}} \right)(\Delta) = \sum\limits_{k=1}^{N} \left( \sum\limits_{\ell=1}^{N} \left( d_{b_{0}} v_{k,\ell} \right)(\Delta) \right) \epsilon_{k} := 
\sum\limits_{k=1}^{N} \left( \sum\limits_{\ell=1}^{N} w^{1}_{k,\ell}(\Delta) + w^{2}_{k,\ell}(\Delta) + w^{3}_{k,\ell}(\Delta) \right) \epsilon_{k}
\end{equation}

\noindent
where, for any $1 \leq k \leq \ell \leq N$,  
\begin{equation}\label{w1}
w^{1}_{k,\ell}(\Delta) = \left( d_{b_{0}} \left( \cA(\cdot)^{-1} \right)_{k, \ell} \right)(\Delta) \left\langle V_{\beta}(b_{0}), \epsilon'_{\ell}(b_{0}) \right\rangle, 
\end{equation}

\begin{equation}\label{w2}
w^{2}_{k,\ell}(\Delta) = \left( \cA(b_{0})^{-1} \right)_{k, \ell} \left\langle \left( d_{b_{0}} V_{\beta} \right)(\Delta), \epsilon'_{\ell}(b_{0}) \right\rangle, 
\end{equation}

\begin{equation}\label{w3}
w^{3}_{k,\ell}(\Delta) = \left( \cA(b_{0})^{-1} \right)_{k, \ell} \left\langle V_{\beta}(b_{0}), \left( d_{b_{0}} \epsilon'_{\ell} \right)(\Delta) \right\rangle.
\end{equation}

\noindent\\
Now, recall that for all $b \in \V$, $\epsilon'_{\ell}(b) = \Ad(Q_{b})(\epsilon_{\ell}) = \Ad(\sigma(b))(\epsilon_{\ell})$. Thus, $\epsilon'_{\ell}(b_{0})=\epsilon_{\ell}$. Furthermore $\sigma(b_{0})=e$, so that $d_{\sigma(b_{0})}\Ad=\ad$. Therefore, by $(\ref{adBracket})$, which relates the adjoint representation on $\g$ to the Lie bracket on $\g$, 
\begin{equation}\label{diffEpsilonPrime}
\left( d_{b_{0}} \epsilon'_{\ell} \right)(\Delta) = \ad\left( \left(d_{b_{0}}\sigma\right)(\Delta) \right)(\epsilon_{\ell}) = \lb \left(d_{b_{0}}\sigma\right)(\Delta) , \epsilon_{\ell} \rb.
\end{equation}

\noindent
Then, recall that for all $b \in \V$, $\cA(b)_{k, \ell} = \left\langle \epsilon_{\ell}, \epsilon'_{k}(b) \right\rangle$. Thus, $\cA(b_{0})_{k, \ell} = \left\langle \epsilon_{\ell}, \epsilon'_{k}(b_{0}) \right\rangle = \left\langle \epsilon_{\ell}, \epsilon_{k} \right\rangle = \delta_{k,\ell}$, where $\delta_{k,\ell}$ is the Kronecker symbol, and $\cA(b_{0})=I_{N}$. So, the derivative of the inverse of a matrix provides that    
\begin{align*}
\left( d_{b_{0}} \left( \cA(\cdot)^{-1} \right)_{k, \ell} \right)(\Delta) &= - \left\{ \cA(b_{0})^{-1} \left( d_{b_{0}} \cA \right)(\Delta) \cA(b_{0})^{-1} \right\}_{k, \ell} \\
&= - \left(d_{b_{0}}\cA_{k, \ell}\right) (\Delta) \\
&= - \left\langle \epsilon_{\ell}, \left(d_{b_{0}}\epsilon'_{k}\right) (\Delta) \right\rangle \\
&= - \left\langle \epsilon_{\ell}, \lb \left(d_{b_{0}}\sigma\right)(\Delta) , \epsilon_{k} \rb \right\rangle \\ 
&= - \left\langle \lb \epsilon_{\ell}, \left(d_{b_{0}}\sigma\right)(\Delta)  \rb , \epsilon_{k} \right\rangle,
\end{align*}

\noindent
where the fourth equality follows from $(\ref{diffEpsilonPrime})$ and the fifth from $(\ref{AdInvariance})$. By $(\ref{w1})$, we deduce that 
\begin{align*}
\sum\limits_{k=1}^{N} \left( \sum\limits_{\ell=1}^{N} w^{1}_{k,\ell}(\Delta) \right) \epsilon_{k} &=
-  \sum\limits_{k=1}^{N} \left( \sum\limits_{\ell=1}^{N} \left\langle \lb \epsilon_{\ell}, \left(d_{b_{0}}\sigma\right)(\Delta)  \rb , \epsilon_{k} \right\rangle \left\langle V_{\beta}(b_{0}), \epsilon_{\ell} \right\rangle \right) \epsilon_{k} \\
&= - \sum\limits_{\ell=1}^{N} \left\langle V_{\beta}(b_{0}), \epsilon_{\ell} \right\rangle \left( \sum\limits_{k=1}^{N} \left\langle \lb \epsilon_{\ell}, \left(d_{b_{0}}\sigma\right)(\Delta) \rb , \epsilon_{k} \right\rangle \epsilon_{k} \right) \\ 
&= - \sum\limits_{\ell=1}^{N} \left\langle V_{\beta}(b_{0}), \epsilon_{\ell} \right\rangle \lb \epsilon_{\ell}, \left(d_{b_{0}}\sigma\right)(\Delta) \rb \\
&= - \lb V_{\beta}(b_{0}), \left(d_{b_{0}}\sigma\right)(\Delta) \rb.
\end{align*}

\noindent
Then, by $(\ref{w2})$, 
\begin{align*}
\sum\limits_{k=1}^{N} \left( \sum\limits_{\ell=1}^{N} w^{2}_{k,\ell}(\Delta) \right) \epsilon_{k} &= 
\sum\limits_{k=1}^{N} \left( \sum\limits_{\ell=1}^{N} \delta_{k,\ell} \left\langle \left( d_{b_{0}} V_{\beta} \right)(\Delta), \epsilon_{\ell} \right\rangle \right) \epsilon_{k} \\
&= \sum\limits_{k=1}^{N} \left\langle \left( d_{b_{0}} V_{\beta} \right)(\Delta), \epsilon_{k} \right\rangle \epsilon_{k} \\
&= \left( d_{b_{0}} V_{\beta} \right)(\Delta). 
\end{align*}

\noindent
Finally, by $(\ref{w3})$ and $(\ref{AdInvariance})$, 
\begin{align*}
\sum\limits_{k=1}^{N} \left( \sum\limits_{\ell=1}^{N} w^{3}_{k,\ell}(\Delta) \right) \epsilon_{k} &=
\sum\limits_{k=1}^{N} \left( \sum\limits_{\ell=1}^{N} \delta_{k, \ell} \left\langle V_{\beta}(b_{0}) , \lb \left(d_{b_{0}}\sigma\right)(\Delta) , \epsilon_{\ell} \rb \right\rangle \right) \epsilon_{k} \\
&= \sum\limits_{k=1}^{N} \left\langle \lb V_{\beta}(b_{0}) , \left(d_{b_{0}}\sigma\right)(\Delta) \rb , \epsilon_{k} \right\rangle \epsilon_{k} \\
&= \lb V_{\beta}(b_{0}) , \left(d_{b_{0}}\sigma\right)(\Delta) \rb \\
&= - \sum\limits_{k=1}^{N} \left( \sum\limits_{\ell=1}^{N} w^{1}_{k,\ell}(\Delta) \right) \epsilon_{k}. 
\end{align*}

\noindent
Therefore, we deduce from the preceding equations and $(\ref{AppDiffYbeta})$ that
\begin{equation*}
\left( d_{b_{0}}\overline{Y_{\beta}} \right)(\Delta) = \sum\limits_{k=1}^{N} \left( \sum\limits_{\ell=1}^{N} w^{2}_{k,\ell}(\Delta) \right) \epsilon_{k} = \left( d_{b_{0}} V_{\beta} \right)(\Delta). 
\end{equation*}

\noindent
This concludes the proof of Proposition $\ref{PropDYbar}$. 
\end{proof}

\subsection{Proofs of section $\ref{sect4}$}

\subsubsection{Proof of Lemma $\ref{FiberPii}$}

\begin{proof}
For any $1 \leq i \leq r$, let $p_{i} : \R^{n} \lra \R^{n}$ be the linear map whose matrix in the canonical basis of $\R^{n}$ is $P_{i}$. Thus, $p_{i}$ is the orthogonal projection onto $\Img(P_{i})$. Now, $Q^{-1}P_{i}Q$ is the matrix of $p_{i}$ in the basis of $\R^{n}$ formed by the columns of $Q$. Consequently: for all $1 \leq i \leq r$, the columns of $Q^{(i)}$ form an orthonormal basis of $\Img(P_{i})$ if and only if, for all $1 \leq i \leq r$, $Q^{-1}P_{i}Q=P^{i}_{0}$ i.e. $QP^{i}_{0}Q^{T}=P_{i}$.
\end{proof}

\subsubsection{Proof of Lemma \ref{PrelimHorLift}}

First, we introduce the following notations. A type $\I=\left( q_{i} \right)_{1 \leq i \leq r}$ induces a partition of $(1, ..., n)$ into blocks of indices as follows. 
\begin{equation*}
B_{1} = (1, ..., q_{1}) ,~ ... ~, B_{i} = (q_{i-1}+1, ..., q_{i-1}+q_{i}) ,~ ... ~, B_{r} = (q_{r-1}+1, ..., q_{r}), \quad 1 < i < r.
\end{equation*}

\noindent
Let $M=\left(m_{\alpha,\beta}\right)_{1 \leq \alpha \leq \beta \leq n}$ be a $n \times n$ matrix. For $1 \leq i \leq j \leq n$, we introduce the following submatrix of $M$.
\begin{equation}\label{blocksWrtI}
M^{(i,j)} := \left(m_{\alpha,\beta}\right)_{\alpha \in B_{i}, \beta \in B_{j}}.
\end{equation}

\noindent
Now, we come to the proof itself. 

\begin{proof}
For any $1 \leq i \leq r$, set 
\begin{equation*}
\Omega[i] := \lb \lb \Omega, P_{0}^{i} \rb, P_{0}^{i} \rb.
\end{equation*}

\noindent
After computations, we obtain that $\Omega[i]$ is the skew-symmetric matrix such that for all $j \neq i$, $\left(\Omega[i] \right)^{(i,j)}=\Omega^{(i,j)}$ and $\left( \Omega[i] \right)^{(i,i)}=0$, while all other blocks of $\Omega[i]$ are zero matrices. So, in $\sum\limits_{i=1}^{r} \Omega[i]$, for any $i_{0} \neq j_{0}$, the block $\Omega^{(i_{0},j_{0})}$ is counted twice: once from the $i_{0}$-th block of lines of $\Omega[i_{0}]$ and once from the $j_{0}$-th block of columns of $\Omega[j_{0}]$. By this, we mean that, 
\begin{equation*}
\left( \sum\limits_{i=1}^{r} \Omega[i] \right)^{(i_{0},j_{0})} = 2 \Omega^{(i_{0},j_{0})}, \quad i_{0} \neq j_{0}.
\end{equation*}

\noindent
On the other hand, $\left( \sum\limits_{i=1}^{r} \Omega[i] \right)^{(i_{0},i_{0})}=0$. Therefore, $\sum\limits_{i=1}^{r} \Omega[i] =2\Omega^{h}$. 
\end{proof}

\subsubsection{Proof of Lemma $\ref{piSG}$}

\begin{proof}
We deduce from results of subsection 2.2 in \cite{Bendokat Zimmermann and Absil 2020} that the horizontal spaces are the same for $g^{\St}$ and the canonical metric on $\St_{q}$. Now, for the latter metric, $(\ref{horLiftSG})$ holds. So $(\ref{horLiftSG})$ also holds for $g^{\St}$. This proves $(i)$. For $(ii)$, we notice first that, by results of \cite{Bendokat Zimmermann and Absil 2020}, $\Pi^{SG}_{q}$ is a smooth submersion. Then, for $P \in \Gr$, and $\Delta, \widetilde{\Delta} \in T_{P}\Gr$, 
\begin{equation*}
g^{\Gr}_{P}\left(\Delta, \widetilde{\Delta} \right) = \mathrm{tr}\left( \left(\Delta_{U}^{\sharp}\right)^{T} \widetilde{\Delta}_{U}^{\sharp} \right) = g^{\St}_{U}\left( \Delta_{U}^{\sharp}, \widetilde{\Delta}_{U}^{\sharp} \right), \quad 
 U \in \left( \pi^{SG}_{q} \right)^{-1}\left(P\right). 
\end{equation*}

\noindent
This follows from $(3.2)$ in \cite{Bendokat Zimmermann and Absil 2020} and $(i)$. Thus, $(ii)$ is proved.
\end{proof}


\bibliographystyle{plain}

\end{document}